\newcommand{\bpr}{\begin{trivlist} \item[]{\bf Proof. }}
\newcommand{\epr}{\hspace*{\fill} $\qed$\end{trivlist}}
\newcommand{\be}{\begin{eqnarray}}
\newcommand{\ee}{\end{eqnarray}}
\newcommand{\ba}{\begin{align}}
\newcommand{\ea}{\end{align}}
\newcommand{\bi}{\begin{itemize}}
\newcommand{\ei}{\end{itemize}}
\newcommand{\secref}[1]{Section~\ref{sec:#1}}
\newcommand{\seclab}[1]{\label{sec:#1}}
\newcommand{\eqlab}[1]{\label{eq:#1}}
\renewcommand{\eqref}[1]{(\ref{eq:#1})}
\newcommand{\figref}[1]{Fig.~\ref{fig:#1}}
\newcommand{\figlab}[1]{\label{fig:#1}}
\newcommand{\propref}[1]{Proposition~\ref{proposition:#1}}
\newcommand{\proplab}[1]{\label{proposition:#1}}
\newcommand{\lemmaref}[1]{Lemma~\ref{lemma:#1}}
\newcommand{\lemmalab}[1]{\label{lemma:#1}}
\newcommand{\thmref}[1]{Theorem~\ref{theorem:#1}}
\newcommand{\thmlab}[1]{\label{theorem:#1}}
\newtheorem{theorem}{Theorem}[section]
\newtheorem{proposition}[theorem]{Proposition}
\newtheorem{lemma}[theorem]{Lemma}
\newtheorem{remark}[theorem]{Remark}
\numberwithin{equation}{section}
\definecolor{orange}{RGB}{255,127,0}
\newcommand\response[1]{{\color{black}{#1}}}
\begin{document}
\title{The number of limit cycles for regularized piecewise polynomial systems is unbounded}
\author{R. Huzak}
\address{Hasselt University, Campus Diepenbeek, Agoralaan Gebouw D, 3590 Diepenbeek, Belgium}
\author{K. Uldall Kristiansen}
\address{Department of Applied Mathematics and Computer Science, 
Technical University of Denmark, 
2800 Kgs. Lyngby, 
Denmark }


 \begin{abstract}
 In this paper, we extend the slow divergence-integral from slow-fast systems, due to De Maesschalck, Dumortier and Roussarie, to smooth systems that limit onto piecewise smooth ones as $\epsilon\rightarrow 0$. In slow-fast systems, the slow divergence-integral  is an integral of the divergence along a canard cycle with respect to the slow time and it has proven very useful in obtaining good lower and upper bounds of limit cycles in planar polynomial systems. In this paper, our slow divergence-integral is based upon integration along a generalized canard cycle for a piecewise smooth two-fold bifurcation (of type visible-invisible called $VI_3$). We use this framework to show that the number of limit cycles in regularized piecewise smooth polynomial systems is unbounded.  

\bigskip
\smallskip

\noindent \textbf{keywords.} Slow divergence-integral, canards, piecewise smooth systems, two-folds, GSPT
 \end{abstract}
 \maketitle
 \section{Introduction}
 In this paper, we consider smooth systems of the form
 \begin{align}
  \dot z &=Z(z,\phi(h(z)\epsilon^{-1})),\eqlab{ztZ}
 \end{align}
 for $z\in \mathbb R^n$, $0<\epsilon\ll 1$ and
 where $h:\mathbb R^n\rightarrow \mathbb R$ is regular, $\phi$ is a regularization function:
\begin{align}
 \phi'(s)>0\mbox{ for all }  s\in \mathbb R,
\quad 
 \phi(s) \rightarrow \begin{cases}
                      1 &\text{for}\,\,s\rightarrow \infty\\
                      0 &\text{for}\,\,s\rightarrow -\infty
                     \end{cases}\eqlab{phi1st}
\end{align}
and where $Z$ is affine in its second component:
\begin{align}\eqlab{affine}
 Z(z,p) = Z_+(z) p + Z_-(z)(1-p).
\end{align}
These systems have recently received a great deal of attention \cite{bossolini2017a,bossolini2020a,jelbart2021c,jelbart2021b,kosiuk2016a,kristiansen2018a,kristiansen2020a,kristiansen2015a,uldall2021a}. The motivation is three-fold. Firstly, in the limit $\epsilon\rightarrow 0$ the system \eqref{ztZ} becomes piecewise smooth (PWS)
\begin{align}
 \dot z &=\begin{cases}
           Z_+(z)  \text{ for }h(z)>0,\\
           Z_-(z)  \text{ for }h(z)<0,
          \end{cases}\eqlab{pws}
\end{align}
with $\Sigma:=\{z:h(z)=0\}$ being a discontinuity/switching manifold, \response{see \figref{pws}}.

\begin{figure}[h!]
\begin{center}
{\includegraphics[width=.65\textwidth]{./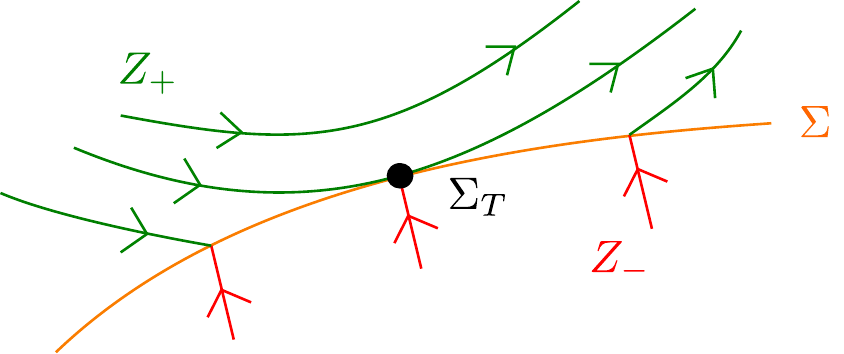}}
\end{center}
\caption{\response{A planar piecewise smooth system \eqref{pws}, having $\Sigma$ as a switching manifold. Here regular orbits of $Z_+$ and $Z_-$ reach $\Sigma$ in finite time. The point $\Sigma_T$ is a tangency point of $Z_+$ (a visible fold if the tangency is quadratic). In the present case, where $Z_-$ is transverse to $\Sigma$ it divides $\Sigma$ into sliding (to the left of $\Sigma_T$) where $Z_\pm$ are in opposition relative to $\Sigma$ and crossing (to the right of $\Sigma_T$) where $Z_\pm$} \response{point in the same direction relative to $\Sigma$. The situation is different if $Z_-$ also has a fold at $\Sigma_T$ (called a two-fold).} \response{Then there can be sliding (stable and unstable) on each side of $\Sigma_T$,} \response{see \figref{twofold123}.}}
\figlab{pws}
\end{figure}

For $0<\epsilon\ll 1$, the system \eqref{ztZ} is therefore a regularized PWS system \cite{Bernardo08,filippov1988differential}. 
The reason for restricting to \eqref{affine} is that in this case, one can show \cite{Sotomayor96} that the singular limit system is a Filippov system \cite{Bernardo08,filippov1988differential}. Lately, there has been a growing interest in understanding how PWS phenomena (folds, grazing, boundary equilibria,... \cite{Kuznetsov2003}) unfold in the smooth version \cite{jelbart2021b,jelbart2021c,kristiansen2018a,kristiansen2020a,kristiansen2015a}. For this purpose methods from Geometric Singular Perturbation Theory (GSPT) and blowup have been refined to deal with resolving the special singular limit of  \eqref{ztZ} \cite{kristiansen2018a,kristiansen2020a}. 
Finally, the interest in systems of the form \eqref{ztZ} is also motivated by applications. For example, in biology switches \cite{Bernardo08,uldall2021a} are frequently modeled by functions \eqref{phi1st} and friction is also inherently piecewise smooth \cite{berger2002a}.  

Mathematically, piecewise smooth system has also received a great deal of attention over the past decades. Starting from the groundbreaking work of Filippov \cite{filippov1988differential} and Utkin \cite{Utkin77}, there was an effort to extend Peixito's program of structural stability to PWS systems \cite{broucke2001a,Sotomayor96}. Subsequently, there has been a focus on characterizing and interpreting the lack of uniqueness of solutions in PWS systems \cite{jeffrey2018a}.

Parallel to this effort, there has been an attempt to bound the number limit cycles in PWS systems in the plane where $n=2$. In contrast to the \response{smooth linear} setting, limit cycles can exist for piecewise linear systems and J. Llibre and co-workers have obtained upper bounds for a number of cases \cite{esteban2021a,li2021a,llibre2013a}. 
Of course, the interest in bounding the number of limit cycles, comes from Hilbert's 16th problem \cite{li2003a} which seeks to bound the number of limit cycles of polynomial systems:
\begin{equation}\eqlab{xyPQ}
\begin{aligned}
 \dot x &=P_N(x,y),\\
 \dot y &=Q_N(x,y),
\end{aligned}
\end{equation}
with $P_N$ and $Q_N$ of fixed degree $N$. Hilbert's 16th problem remains unsolved to this day. Whereas general progress has been made on $N=2$ \cite{artes,Gavrilov,Mardesic,dumortier1996a,dumortier1994a,DDRR,Ren,RR15} and on Smale's version of the problem  where \eqref{xyPQ} is restricted to classical Li\'{e}nard type: $P_N(x,y) = y-p_N(x),\,Q_N=-x$, see \cite{caubergh2012a,CZLL,LMP,smale}, there has been an emphasis on obtaining good lower bounds on the number of limit cycles (see \cite{ChrisL,Valery,Han1,LMT} and references therein). Following the work of De Maesschalck, Dumortier and Roussarie, see \cite{DM-entryexit,DM,dumortier2011a,dumortier_1996,dumortier2001a}, a key tool in this effort has been the \textit{slow divergence-integral} from slow-fast systems and canard theory; in particular, the roots of the slow divergence-integral provide candidates for limit cycles. For example, using this tool good lower bounds on the number of limit cycles in Li\'{e}nard equations can be found (see \cite{DDMoreLC,SDICLE1,DPR,HuDe,lvarez2020a}).

\subsection{Main result}\seclab{mainres}
In this paper, we work at the interface of these research fields. In particular, we consider \eqref{ztZ} with $n=2$, put $z=(x,y)$ and restrict attention to the case $h(z)=y$ so that the switching manifold is $\Sigma= \{(x,y):y=0\}$ and then ask the following question:

\textit{Does there exist polynomial vector-fields $Z_\pm$ such that the number of limit cycles of $Z$ is unbounded?}

We prove that this is in fact true, even for quadratic vector-fields. More precisely we prove the following. 
\begin{theorem}\thmlab{mainthm}
 There exists a quadratic vector-field $Z_+(\cdot,\lambda)$ and a linear vector-field $Z_-(\cdot,\lambda)$, depending smoothly on a parameter $\lambda\in\mathbb{R}$, such that the following holds true in a compact domain $U$:
 
 For every $k\in \mathbb N$ there exist: (a) $\epsilon_k>0$, (b) a regularization function $\phi_k:\mathbb R
 \rightarrow \mathbb R$, and (c) a continuous function $\lambda_c^k:[0,\epsilon_k[\rightarrow \mathbb R$ such that the regularized vector-field:
 \begin{align*}
  Z(z) = Z_+(z,\lambda_c^k(\epsilon))\phi_k(y\epsilon^{-1})+Z_-(z,\lambda_c^k(\epsilon))(1-\phi_k(y\epsilon^{-1})),
 \end{align*}
has at least $k$ limit cycles contained in $U$ for all $\epsilon\in ]0,\epsilon_k[$. 
\end{theorem}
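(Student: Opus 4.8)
The plan is to reduce the statement to a counting problem for the zeros of a slow divergence-type integral. One associates to the limiting PWS two-fold a one-parameter family of generalized canard cycles $\Gamma_v$ and a slow divergence-integral $I(v)$ along them; by the standard canard mechanism, a simple zero of $I$ unfolded by a breaking parameter produces a hyperbolic limit cycle of the regularized system for $0<\epsilon\ll1$. Unboundedness is then obtained not by raising the degree of $Z_\pm$, which stays fixed and low, but by using the essentially infinite-dimensional freedom in the regularization function $\phi$ (constrained only by \eqref{phi1st}), together with the breaking parameter $\lambda$, to make $I$ have arbitrarily many simple zeros.

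\emph{Construction of the PWS model and its canard cycles.} First I would fix explicit vector fields, a quadratic $Z_+(\cdot,\lambda)$ and a linear $Z_-(\cdot,\lambda)$, arranged so that the Filippov limit \eqref{pws} with $h(z)=y$ and $\Sigma=\{y=0\}$ has a visible--invisible two-fold of type $VI_3$ at the origin: $Z_+$ with a visible quadratic fold, $Z_-$ with an invisible one, and the defining inequalities of the $VI_3$ subcase. I would choose $Z_+$ so that in $\{y>0\}$ the orbit leaving the visible fold returns transversally to $\Sigma$, producing a one-parameter family of generalized canard cycles $\Gamma_v$, $v\in J$, each an arc of $Z_+$ concatenated with a sliding arc that runs through the two-fold region; $\lambda$ is chosen so as to control the passage through that region.

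\emph{Reduction to the slow divergence-integral.} Next I would invoke the framework developed in the earlier part of the paper. After regularization and blow-up of $\Sigma$, the $VI_3$ two-fold unfolds into a slow--fast problem with hyperbolic (Fenichel) slow manifolds away from finitely many fold points and a ``central'' chart in which the reduced flow depends explicitly on $\phi$. Tracking the slow manifolds and the transition maps through the folds and across the blow-up charts, one shows that the displacement map of $Z$ near $\Gamma_v$ is, for $0<\epsilon\ll1$ and $\lambda$ near a canard value $\lambda_c^k(\epsilon)$, governed after a suitable rescaling by $v\mapsto I(v)$, where $I(v)=\int_{\Gamma_v}(\divergenceOperator Z)\,\d\tau$ is the integral of the divergence along $\Gamma_v$ with respect to the Filippov slow time, and where the two-fold passage contributes to $I$ a term, picked up in the central chart, that is a functional of $\phi$. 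Consequently a simple zero $v_\ast$ of $v\mapsto I(v)$, unfolded by $\lambda$, persists as a hyperbolic limit cycle of $Z$ lying in a fixed compact set $U$ around $\Gamma_{v_\ast}$, uniformly for $\epsilon\in(0,\epsilon_k)$. This reduction is the technical core and, I expect, the main obstacle: it requires a normal form for the regularized $VI_3$ two-fold, uniform Fenichel/blow-up estimates for the slow manifolds and for the fold-passage maps, and the identification of the leading term of the displacement map with (the derivative of) $I$, in the spirit of De Maesschalck--Dumortier--Roussarie but with the extra chart coming from $\phi$.

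\emph{Forcing arbitrarily many zeros.} Finally, I would compute $I(v)$ for the chosen family and show that, as $\phi$ ranges over admissible regularization functions \eqref{phi1st} and $\lambda$ over $\mathbb R$, the map $v\mapsto I(v)$ realizes functions with as many simple zeros in $J$ as desired: the $\phi$-dependent part of $I$ is an integral transform of $\phi$ of infinite rank, so that finitely many of the Taylor data of $I$ at a reference cycle can be prescribed freely while keeping $\phi$ monotone with the limits required by \eqref{phi1st}. Given $k\in\mathbb N$, I would then pick $\phi_k$ and $\lambda^\ast$ so that $v\mapsto I(v)$ has at least $k$ simple zeros in $J$, read off $\epsilon_k>0$ and the curve $\lambda_c^k(\epsilon)$ with $\lambda_c^k(0)=\lambda^\ast$ from the persistence statement, and conclude that $Z$ has at least $k$ hyperbolic limit cycles contained in $U$ for every $\epsilon\in(0,\epsilon_k)$. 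The delicate point here is to check that the low-degree constraint on $Z_\pm$ is not an obstruction --- i.e. that the map from $\phi$ to the Taylor data of $I$ genuinely has infinite rank on the admissible class --- and to carry out every construction inside one fixed compact domain $U$, uniformly in $k$.
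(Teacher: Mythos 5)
Your proposal takes essentially the same route as the paper: fix a quadratic $Z_+$ and a linear $Z_-$ with a $VI_3$ two-fold, reduce the limit-cycle count to simple zeros of the slow divergence-integral via the breaking parameter $\lambda$ (\thmref{mainslowdiv}), and exploit the freedom in the regularization function to prescribe finitely many Taylor coefficients of $I$ at the two-fold while keeping $\phi$ monotone by a localized (bump-function) modification --- which is precisely \lemmaref{I2k1} and \thmref{thmhere}, where the nonvanishing coefficients $C_{2k}$ under $Y_+''\ne 0$ supply exactly the ``infinite rank'' you ask for. The only slip is geometric and harmless: the canard cycles close through the invisible-fold arcs of $Z_-$ in $y<0$, not through $Z_+$ arcs in $y>0$ (which, for the paper's choice of $Z_+$ with first component $1$, never return to $\Sigma$).
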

\response{We give examples of $Z_+$ and $Z_-$ later on, see \eqref{Zp1}\eqref{Zn1}.}
We emphasize that the unboundedness of limit cycles stems from the regularization and not from the vector-fields $Z_\pm$. \response{We use smooth regularization functions in order to find an unbounded number of limit cycles. It is known that boundedness of limit cycles is closely related to the notion of o-minimality in function spaces (see e.g. \cite{O-minimal}). Our smoothings are
taken from a family that does not have this o-minimality property. From this viewpoint it is not surprising that we find that the number of limit cycles is unbounded. }

\response{At the same time, \thmref{mainthm} also illustrates a certain degree of deficiency with smoothing piecewise smooth systems (since the result may depend upon how we regularize). On the other hand, there are other complementary results, see \cite{bossolini2020a,jelbart2021b,jelbart2021c,kristiansen2018a}, that show that smoothing play little role (at least on a macroscopic-level, i.e. at $\mathcal O(1)$) for different types of PWS singularities and bifurcations. In \cite{kristiansen2018a} for example, it was shown that the regularization of the visible-invisible fold in $\mathbb R^3$, with $\Sigma$ being two-dimensional, is independent of the smoothing function. In fact, for the system in \thmref{mainthm} it is also only in an exponentially small parameter regime that a different number of limit cycles can be realized for different regularization functions. }

To prove \thmref{mainthm}, we will follow the approach of \cite{DM-entryexit} and use a slow divergence-integral. But seeing that our system is nonsmooth  (as opposed to slow-fast) in the singular limit $\epsilon\rightarrow 0$ we will first have to develop this framework within the setting of \eqref{ztZ}. For slow-fast systems, the slow divergence-integral is defined along a canard trajectory, i.e. along a folded critical manifold with an equilibrium at the fold in such way that the reduced problem goes from the attracting sheet to the repelling one with nonzero speed. In the setting of \eqref{ztZ}, our slow divergence-integral will be based upon the PWS two-fold bifurcation \cite{bonet-reves2018a,kristiansen2015a}, which is reminiscent of the standard canard \cite{dumortier_1996,krupa_relaxation_2001}. In particular, $Z_\pm$ in \thmref{mainthm} will be chosen so that the PWS system has a two-fold bifurcation (of type visible-invisible called $VI_3$ \cite{Kuznetsov2003}).
\response{Proposition \ref{prop-diffmap} then describes the structure of the difference map near the associated canard-like limit periodic sets (see \secref{diffmap}).}

\response{Proposition \ref{prop-diffmap} is not only relevant and important for proving \thmref{mainthm}, but also for studying bifurcations of limit cycles inside such visible-invisible two-folds (see Remark \ref{remark-Prop-important}). This proposition is therefore also one of our main results, but we delay the detailed statement to later sections after having introduced the two-fold bifurcation model (see \secref{twofold}).}

\response{Our approach for constructing an unbounded number of limit cycles, does not work for the piecewise linear case. It would be interesting to study the linear case more carefully in future work.}
\subsection{Overview}
 The paper is organized as follows: In \secref{twofold}, we define a planar PWS two-fold and revisit some results from \cite{bonet-reves2018a,kristiansen2015a} on canards of \eqref{ztZ} for $0<\epsilon\ll 1$. Next in \secref{slowdiv} we define the slow divergence-integral and prove that simple roots of this function lead to hyperbolic limit cycles (\thmref{mainslowdiv}). In the proof of \thmref{mainslowdiv}, \response{we describe the difference map in terms of the slow divergence integral in Proposition \ref{prop-diffmap}. For the proof of this statement, we also use
 Appendix \ref{appendix-0} and Appendix \ref{appendix}.} 
 In \secref{final} we then prove \thmref{mainthm}, using \thmref{mainslowdiv}, see also \thmref{thmhere}, and \response{finally in  \secref{num} we illustrate our approach with numerical examples}.

\section{The two-fold bifurcation}\seclab{twofold}
We consider \eqref{ztZ} with $h(z)=y$:
 \begin{align}
  \dot z &=Z(z,\phi(y\epsilon^{-2}),\lambda),\eqlab{zZ}
 \end{align}
for $z=(x,y)\in \mathbb R^{2}$. In comparison with \eqref{ztZ} we have also included $\lambda\sim \lambda_0\in\mathbb{R}$ as an additional unfolding parameter. Notice also that we write $\epsilon^{-2}$ in \eqref{zZ} rather than just $\epsilon^{-1}$, since this will be convenient later on (see \secref{cylblowup}). The basic assumption is that the right hand side $Z$ is smooth in each entry (in this paper, by ``smooth" we mean differentiable of class $C^\infty$). In particular we suppose that it is affine in the second component, i.e.,
\begin{align*}
 Z(z,p,\lambda) = Z_+(z,\lambda)p + Z_-(z,\lambda)(1-p),
\end{align*}
where $Z_\pm =(X_\pm,Y_\pm)$ are smooth in $(z,\lambda)$. The function $\phi:\mathbb{R}\to \mathbb{R}$ is a smooth sigmoidal function satisfying the following assumptions:
\begin{enumerate}[label=({A}{{\arabic*}})]
%
 \item \label{assA} The function $\phi$ has the following asymptotics when $s\to\pm\infty$: \begin{align*}
 \phi(s)\rightarrow \begin{cases}
                     1 & \text{for}\quad  s\rightarrow \infty,\\
                     0 & \text{for}\quad s\rightarrow -\infty.
                    \end{cases}
\end{align*}
\item \label{assB} The function $\phi$ is strictly monotone, i.e.,  $\phi'(s)>0$ for all $s\in \mathbb R$.
\item \label{assC} The function $\phi$ is smooth at $\pm \infty$ in the following sense: Each of the functions
\begin{align*}
 \phi_+(s):=\begin{cases}
             1 & \text{for}\quad s=0,\\
             \phi(s^{-1}) & \text{for}\quad s>0,
            \end{cases},\quad
            \phi_-(s):=\begin{cases}
             \phi(-s^{-1}) & \text{for}\quad s>0,\\
             0 & \text{for}\quad s=0,
            \end{cases}
\end{align*}
are smooth at $s=0$. 
%
\end{enumerate}


%
\smallskip

By assumption \ref{assA}, the system \eqref{zZ} is piecewise smooth (PWS) in the limit $\epsilon\rightarrow 0$:
\begin{align}
 \dot z = \begin{cases}
           Z_+(z,\lambda) &\text{for}\quad y>0,\\
           Z_-(z,\lambda) &\text{for}\quad y<0,
          \end{cases}\eqlab{zZpm}
\end{align}
the set $\Sigma$ defined by $(x,0)$ being the discontinuity set/switching manifold, for each $\lambda\sim \lambda_0$. 
In fact, from assumption \ref{assC} we have that \eqref{zZ} is a regular perturbation of $Z_+$ or $Z_-$ outside any fixed neighborhood of $y=0$. In particular:
\begin{lemma}\label{lemma-regular}
Suppose that \response{there is a smallest $k\in \mathbb N$} such that $\phi_+^{(k)}(0)\ne 0$. Then within $y\ge c$, with $c>0$ fixed
\begin{align*}
 Z = Z_++\mathcal O(\epsilon^{2k}),
\end{align*}
smoothly and uniformly with respect to $\epsilon\rightarrow 0$. 
 \end{lemma}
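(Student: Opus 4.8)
The plan is to reduce the statement to the behaviour of $1-\phi$ near $+\infty$, which by assumption \ref{assC} is governed by the smooth function $\phi_+$, and then to extract the factor $\epsilon^{2k}$ by an iterated Hadamard lemma. Since $Z$ is affine in its second argument,
\begin{align*}
 Z(z,\phi(y\epsilon^{-2}),\lambda) - Z_+(z,\lambda) = \bigl(Z_-(z,\lambda)-Z_+(z,\lambda)\bigr)\bigl(1-\phi(y\epsilon^{-2})\bigr),
\end{align*}
so it is enough to show that $1-\phi(y\epsilon^{-2})$ equals $\epsilon^{2k}$ times a function that, together with all of its partial derivatives in $(z,\lambda,\epsilon)$, is bounded uniformly on $\{y\ge c\}$ within the compact domain under consideration; the assertion of the lemma then follows by the Leibniz rule, using that $Z_\pm$ and all their derivatives are bounded on that compact set, and that $\phi(y\epsilon^{-2})$ does not depend on $x$ or $\lambda$.

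For $y>0$ one has $\phi(y\epsilon^{-2}) = \phi_+(\epsilon^2 y^{-1})$ by the definition of $\phi_+$. By assumption \ref{assC} and the minimality of $k$, the smooth function $v\mapsto \phi_+(v)-1$ vanishes at $v=0$ together with its first $k-1$ derivatives; $k$ applications of Hadamard's lemma therefore give $\phi_+(v)-1 = v^k\psi(v)$ for $v$ near $0$, with $\psi$ smooth and $\psi(0)=\phi_+^{(k)}(0)/k!$. Substituting $v=\epsilon^2 y^{-1}$ yields
\begin{align*}
 Z(z,\phi(y\epsilon^{-2}),\lambda) - Z_+(z,\lambda) = \epsilon^{2k}\,\bigl(Z_+(z,\lambda)-Z_-(z,\lambda)\bigr)\,y^{-k}\,\psi\!\left(\epsilon^2 y^{-1}\right).
\end{align*}
On $\{y\ge c\}$ the factor $y^{-k}$ is bounded by $c^{-k}$, and since $y$ is also bounded above on the compact domain, $\epsilon^2 y^{-1}$ lies, for all sufficiently small $\epsilon$, in a fixed neighbourhood of $0$ on which $\psi$ is smooth with all derivatives bounded; together with the smoothness and boundedness of $Z_\pm$ this gives the claimed uniform $\mathcal O(\epsilon^{2k})$ estimate, valid also after differentiation in $z$ and $\lambda$.

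I do not expect a genuine obstacle here: the proof is essentially the observation that composing the smooth function $\phi_+$ with the smooth, $\mathcal O(\epsilon^2)$ map $\epsilon\mapsto\epsilon^2 y^{-1}$ produces a smooth, $\mathcal O(\epsilon^{2k})$ perturbation. The only points to keep in mind are that one must stay away from $y=0$ — this is precisely what the hypothesis $y\ge c$ ensures, so that the argument $\epsilon^2 y^{-1}$ of $\psi$ remains in the region where $\phi_+$ is known to be smooth — and that factoring out $\epsilon^{2k}$ \emph{before} differentiating avoids the negative powers of $\epsilon$ that a direct Taylor expansion of $\phi(y\epsilon^{-2})$ would generate.
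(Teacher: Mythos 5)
Your argument is correct and is exactly the reasoning the paper leaves implicit (it states the lemma as an immediate consequence of assumption (A3) without a written proof): for $y\ge c$ one has $1-\phi(y\epsilon^{-2})=1-\phi_+(\epsilon^2 y^{-1})=-(\epsilon^2 y^{-1})^k\psi(\epsilon^2 y^{-1})$ by smoothness of $\phi_+$ at $0$ and minimality of $k$, and the affine structure of $Z$ converts this into a smooth $\mathcal O(\epsilon^{2k})$ correction. Your care in factoring out $\epsilon^{2k}$ before differentiating, and in using $y\ge c$ to keep the argument of $\psi$ near $0$, is precisely what makes the estimate hold smoothly and uniformly.
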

A similar result obviously holds for $Z_-$ (in terms of $\phi_-^{(k)}\ne 0$).
In PWS theory \cite{Bernardo08} we divide $\Sigma$ into different subsets $\Sigma_{cr}(\lambda)$, $\Sigma_{sl}(\lambda)$ and $\Sigma_T(\lambda)$, each depending upon on $\lambda$, which are defined as follows:
\begin{itemize}
 \item[(1)] The subset $\Sigma_{cr}(\lambda)\subset \Sigma$ consisting of all points $q=(x,0)$ where
 \begin{align*}
  Y_+(q,\lambda)Y_-(q,\lambda)>0,
 \end{align*}
 is called ``crossing''.
\item[(2)] The subset $\Sigma_{sl}(\lambda)\subset \Sigma$ consisting of all points $q=(x,0)$ where
\begin{align*}
  Y_+(q,\lambda)Y_-(q,\lambda)<0.
 \end{align*}
 is called ``sliding''. It is said to be stable (resp. unstable) if $Y_+<0$ and $Y_->0$, (resp. $Y_+>0$ and $Y_-<0$).
 \item[(3)] The subset $\Sigma_T(\lambda)\subset \Sigma$ where either $Y_+(q,\lambda)=0$ or $Y_-(q,\lambda)=0$ is called the PWS singularities.
 \end{itemize}
It is well-known \cite{Sotomayor96}, that once assumption \ref{assB} holds, sliding for \eqref{zZpm} implies existence of an invariant manifold for \eqref{zZ}. 
\begin{theorem}\thmlab{slidinghmtsd}
Suppose that \ref{assA} and \ref{assB} hold true and that the PWS system \eqref{zZpm} has stable/unstable sliding along some subset $\Sigma_{sl}\subset \Sigma$, i.e. $Y_+(x,0,\lambda)Y_-(x,0,\lambda)<0$ for $(x,0)\in \Sigma_{sl}$. \response{Let $I$ be a compact interval so that $I\times \{0\}\subset \Sigma_{sl}$.} Then for all $0<\epsilon\ll 1$, there is a locally invariant manifold of \eqref{zZ} with foliation by stable/unstable fibers, respectively, of the following graph form $y=\epsilon^2 h(x,\epsilon^2)$, \response{$x\in I$}. The reduced dynamics for $\epsilon\rightarrow 0$ on this manifold is given by:
\begin{align}
 \dot x &=X_{sl}(x,\lambda) := X_+(x,0,\lambda)p+X_-(x,0,\lambda)(1-p),\eqlab{Xsl}
\end{align}
where $p=p(x)\in ]0,1[$ solves $Y_+(x,0,\lambda)p+Y_-(x,0,\lambda)(1-p)=0$.
\end{theorem}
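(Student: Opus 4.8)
The plan is to reduce the statement to a standard application of Fenichel theory after a suitable blow-up/rescaling that turns the switching region $y = \mathcal{O}(\epsilon^2)$ into an $\mathcal{O}(1)$-normally-hyperbolic critical manifold. First I would rescale the $y$-variable by setting $y = \epsilon^2 Y$ (the reason the paper wrote $\epsilon^{-2}$ rather than $\epsilon^{-1}$ in \eqref{zZ}), so that $\phi(y\epsilon^{-2}) = \phi(Y)$ becomes $\epsilon$-independent. In the $(x, Y)$-coordinates the system reads $\dot x = X(x, \epsilon^2 Y, \phi(Y), \lambda)$, $\dot Y = \epsilon^{-2} Y(x, \epsilon^2 Y, \phi(Y), \lambda)$; multiplying the second equation by $\epsilon^2$ (i.e. switching to the fast time) gives a genuine slow-fast system with fast variable $Y$ and a $1$-dimensional critical manifold. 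Expanding $Y(x, \epsilon^2 Y, \phi(Y), \lambda) = Y_+(x,0,\lambda)\phi(Y) + Y_-(x,0,\lambda)(1-\phi(Y)) + \mathcal{O}(\epsilon^2)$, the critical manifold at $\epsilon = 0$ over $x \in I$ is given by $Y = G(x,\lambda)$ where $G$ solves $Y_+(x,0,\lambda)\phi(Y) + Y_-(x,0,\lambda)(1-\phi(Y)) = 0$.

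Next I would check the two hypotheses of Fenichel's theorem along this manifold. Existence and uniqueness of $G(x,\lambda)$: since $Y_+ Y_- < 0$ on $I \times \{0\}$, the convex combination $Y_+\phi + Y_-(1-\phi)$ runs monotonically (by \ref{assB}, $\phi' > 0$) from $Y_-(x,0,\lambda)$ at $Y = -\infty$ to $Y_+(x,0,\lambda)$ at $Y = +\infty$ with opposite signs, so there is exactly one zero $G(x,\lambda)$, smooth in $(x,\lambda)$ by the implicit function theorem, and the compactness of $I$ gives a uniform bound. Normal hyperbolicity: the linearization of the fast equation in $Y$ at the critical manifold equals $(Y_+(x,0,\lambda) - Y_-(x,0,\lambda))\,\phi'(G(x,\lambda))$, which is nonzero and of a definite sign (attracting when $Y_+ < 0 < Y_-$, repelling when $Y_+ > 0 > Y_-$), uniformly on the compact set $x \in I$. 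Then Fenichel's theorem yields, for all $0 < \epsilon \ll 1$, a locally invariant slow manifold $Y = H(x, \epsilon)$ with $H(x,0) = G(x,\lambda)$, smooth in $(x,\epsilon)$, together with the stable/unstable Fenichel fibration; translating back via $y = \epsilon^2 Y$ gives the graph $y = \epsilon^2 h(x, \epsilon^2)$ with $h(x,0) = G(x,\lambda)$ (one gets $\epsilon^2$ rather than $\epsilon$ inside $h$ because the $\mathcal{O}(\epsilon^2)$ corrections above are even in $\epsilon$, $\epsilon$ appearing only through $\epsilon^2 Y$).

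Finally, to identify the reduced flow I would restrict the $\dot x$-equation to the critical manifold and pass to the limit $\epsilon \to 0$: $\dot x = X_+(x,0,\lambda)\phi(G(x,\lambda)) + X_-(x,0,\lambda)(1-\phi(G(x,\lambda)))$. Setting $p = p(x) := \phi(G(x,\lambda)) \in\, ]0,1[$ (the open interval because $\phi$ has range in $]0,1[$ by \ref{assA}, \ref{assB}), the defining equation $Y_+ \phi(G) + Y_-(1-\phi(G)) = 0$ becomes exactly $Y_+(x,0,\lambda)p + Y_-(x,0,\lambda)(1-p) = 0$, and the reduced dynamics is $\dot x = X_{sl}(x,\lambda) = X_+(x,0,\lambda)p + X_-(x,0,\lambda)(1-p)$, which is \eqref{eq:Xsl}. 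The main obstacle, and the only genuinely nontrivial point, is verifying the regularity of $G$ uniformly up to $Y = \pm\infty$ and confirming that the error terms enter only through $\epsilon^2$ so that the manifold has the stated even-in-$\epsilon$ form; this is where assumption \ref{assC} (smoothness of $\phi_\pm$ at the compactification points) is used, in the same spirit as Lemma \ref{lemma-regular}. Everything else is a direct invocation of Fenichel's theorem on a compact normally hyperbolic critical manifold.
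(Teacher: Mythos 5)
Your proposal is correct and follows essentially the same route as the paper: rescale $y=\epsilon^2 y_2$ to obtain a slow--fast system, identify the critical manifold $y_2=\phi^{-1}\bigl(\tfrac{-Y_-}{Y_+-Y_-}(x,0,\lambda)\bigr)$, verify normal hyperbolicity via the single nontrivial eigenvalue $(Y_+-Y_-)\phi'$, and invoke Fenichel's theorem, with the reduced flow read off exactly as in \eqref{Xsl}. The only remark is that your flagged ``obstacle'' about regularity of $G$ up to $Y=\pm\infty$ and assumption \ref{assC} is not actually needed here: since $Y_+Y_-<0$ is bounded away from zero on the compact interval $I$, the critical manifold stays in a compact region of the $y_2$-variable, so \ref{assA} and \ref{assB} suffice, as in the paper's hypotheses.
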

\begin{proof}
 The proof is elementary so we include it. Define $y_2$ by $y=\epsilon^2 y_2$. Then 
 \begin{equation}\eqlab{divergence}
 \begin{aligned}
  x' &=\epsilon^2 X(x,\epsilon^2 y_2,\phi(y_2),\lambda),\\
  y_2' &=Y(x,\epsilon^2 y_2,\phi(y_2),\lambda),
 \end{aligned}
 \end{equation}
i.e. a slow-fast system with $Y(x,0,\phi(y_2),\lambda)=Y_+(x,0,\lambda)\phi(y_2)+Y_-(x,0,\lambda)(1-\phi(y_2))=0$ defining a critical manifold for $\epsilon=0$. 
Linearization around any point on this manifold for $\epsilon=0$ produces a single nontrivial eigenvalue $\left(Y_+(x,0,\lambda)-Y_-(x,0,\lambda)\right)\phi'(y_2)$ which is nonzero since $Y_+Y_-<0$ and since \ref{assB} holds. In fact, its sign is only determined by $Y_+$ and $Y_-$. Hence the critical manifold, which takes a graph form
\begin{align*}
 y_2 = \phi^{-1} \left(\frac{-Y_-}{Y_+-Y_-}(x,0,\lambda)\right),\, (x,0)\in \Sigma_{sl}(\lambda),
\end{align*}
is hyperbolic and attracting/repelling whenever the associated sliding is stable/unstable. The result therefore follows by Fenichel's theory \cite{fen3}.
\end{proof}

By plugging the expression for $$p(x,\lambda)=\frac{-Y_-}{Y_+-Y_-}(x,0,\lambda),$$ into \eqref{Xsl}, we may write $X_{sl}$ as 
\begin{align}
X_{sl}(x,\lambda)=\frac{\text{det}\,Z}{Y_+-Y_-}(x,0,\lambda),\eqlab{Xsl2}
\end{align}
where
\begin{align*}\text{det}\,Z (x,0,\lambda) := (X_-Y_+-X_+Y_-)(x,0,\lambda)\end{align*}
The vector-field \eqref{Xsl2} is known as the Filippov sliding vector-field \cite{filippov1988differential} and PWS systems with this vector-field prescribed on $\Sigma_{sl}$ are called Filippov systems.

\subsection{Folds} 
 Clearly, $\Sigma=\Sigma_{sl}(\lambda)\cup \Sigma_T(\lambda) \cup \Sigma_{cr}(\lambda)$ for each $\lambda$. 
 We further classify the points in $\Sigma_T$ as follows (see also \cite{Bernardo08}):
 \begin{itemize}
  \item[(4)] A point $q\in \Sigma_T(\lambda)$ is a fold point from ``above'' if the orbit of $Z_+(\cdot,\lambda)$ through $q$ has a quadratic tangency with $\Sigma$ at $q$. In terms of Lie-derivatives $Z_\pm (h)(\cdot,\lambda) := \nabla h \cdot Z_\pm(\cdot,\lambda)$, with $h(x,y)=y$, the last condition becomes: 
  \begin{align*}
  \begin{cases}
  Z_+(q,\lambda )&\ne 0,\\ 
  Z_+(h)(q,\lambda)&=0, \\
  Z_+^2(h)(q,\lambda)&\ne 0.\end{cases}
  \end{align*} We define a fold point from ``below'' in terms of $Z_-$ in a similar way. 
  \item[(5)] A fold point $q\in \Sigma_T(\lambda)$ from ``above'' is said to be visible, if the orbit of $Z_+(\cdot,\lambda)$ through $q$ is contained within $y>0$ in neighborhood of $q$. It is said to be invisible otherwise. In terms of Lie-derivatives, we clearly have $Z_+^2(h)(q,\lambda)>0$ iff $q$ satisfying $Z_+(q,\lambda)\ne 0$, $Z_+(h)(q,\lambda)=0$ is visible. Fold points from below are classified in a similar way. In particular, $Z_-^2(h)(z)<0$ iff $q$ satisfying $Z_-(q,\lambda)\ne 0$, $Z_-(h)(q,\lambda)=0$ is visible.
 \end{itemize}
 Fold points  that are only PWS singularities on one side of $\Sigma$ are persistent by the implicit function theorem, in the following sense: If $\Sigma_T(\lambda_0)$ consists of a fold point $q(\lambda_0)$ (from above or below), then $\Sigma_T(\lambda)$ also consists of a fold point $q(\lambda)$ (from above or below, respectively) for any $\lambda\sim \lambda_0$. In fact, $q(\lambda)$ then also depends smoothly on $\lambda\sim \lambda_0$. 
 
 \subsection{Two-folds} \seclab{section-2-fold}
 Now, we finally arrive at the concept of two-folds in PWS systems, which will play the role of a canard point in our analysis of \eqref{zZ}.
 \begin{itemize}
 \item[(6)] A two-fold $q\in \Sigma_T(\lambda)$ is a point with quadratic tangencies from above \textit{and} from below. In terms of Lie-derivatives we have: 
  \begin{align}\eqlab{twofoldcond}
  \begin{cases}
  Z_\pm (q,\lambda )&\ne 0,\\ 
  Z_\pm (h)(q,\lambda)&=0, \\
  Z_\pm ^2(h)(q,\lambda)&\ne 0,\end{cases}
  \end{align}
  with these equations understood to hold for \textit{both} $\pm$.
%
 \item[(7)] A two-fold is said to be visible-visible, visible-invisible, invisible-invisible according to the ``visibility'' of the fold from above and below, respectively, see item (5) above. 
\end{itemize}

The three distinct cases are illustrated in \figref{twofold123}. The further details depend on the direction of the flow. In fact, according to \cite{Kuznetsov2003} there are 7 cases, two visible-visible cases (called $VV_{1,2}$), three visible-invisible cases (called $VI_{1-3}$) and two invisible-invisible cases $II_{1,2}$). We refer to \cite{Kuznetsov2003} as well as \cite{bonet-reves2018a,kristiansen2015a} for further details here. They will not be needed in the present manuscript. \smallskip

In contrast to a fold, a two-fold is a co-dimension one (PWS) bifurcation \cite{bonet-reves2018a}. Consequently, if $q\in \Sigma_T(\lambda_0)$ is a two-fold then generically there is a neighborhood $U$ of $q$ such that $Z_\pm(\cdot,\lambda)$ for $\lambda\ne \lambda_0$, $\lambda\sim \lambda_0$ does not have any two-folds in $U$. Upon writing $Z_\pm(\cdot,\lambda)=Z_\pm (\cdot,\lambda_0)+(\lambda-\lambda_0)\widetilde Z_\pm(\cdot)+\mathcal O((\lambda-\lambda_0)^2)$, \cite[Theorem 2.6]{bonet-reves2018a} showed that the unfolding is versal if
\begin{align}
\widetilde Y_- Y_+' \ne  \widetilde Y_+ Y'_-.
\eqlab{versal}
\end{align}
 Here we denote by $()'$ \textit{the partial derivative with respect to $x$, a convention we will continue to adopt in the following.}

\begin{figure}[h!]
\begin{center}
\subfigure[]{\includegraphics[width=.3\textwidth]{./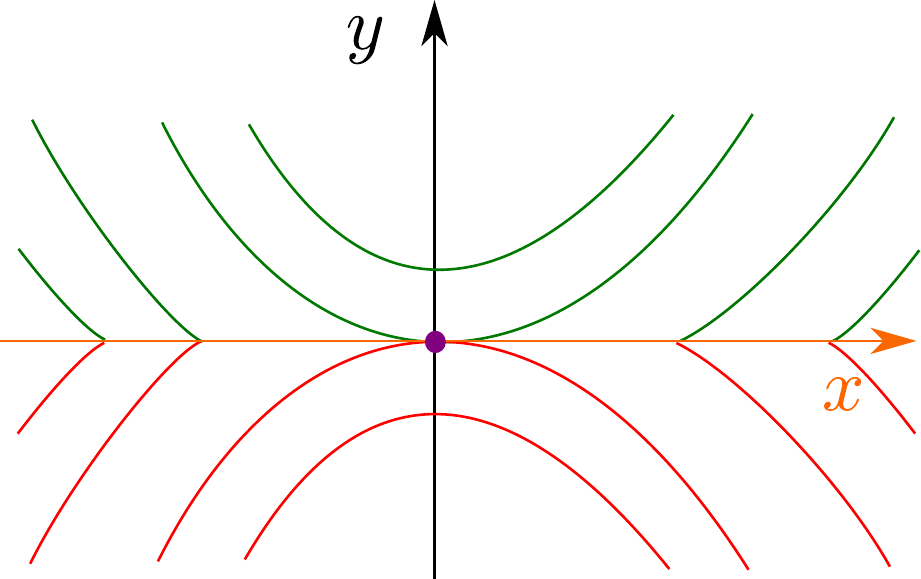}}
\subfigure[]{\includegraphics[width=.3\textwidth]{./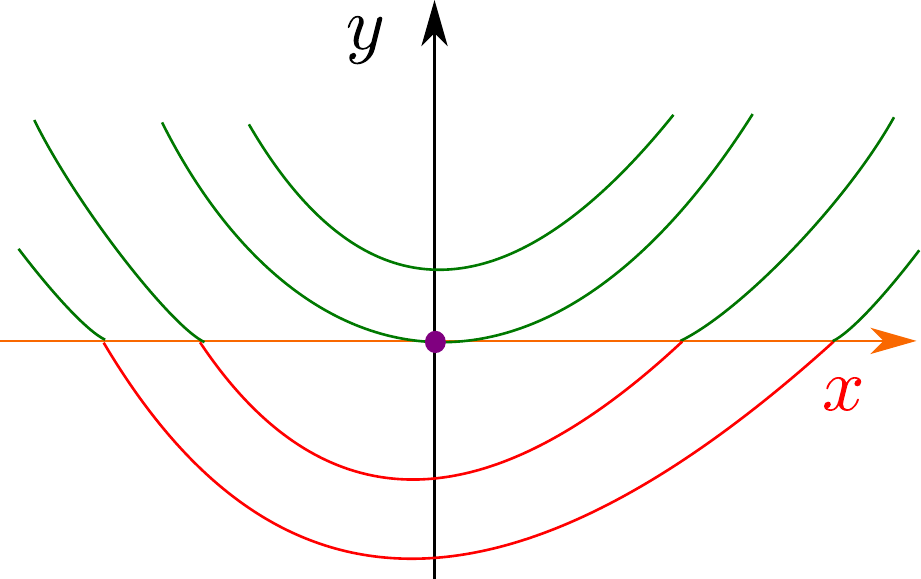}}
\subfigure[]{\includegraphics[width=.3\textwidth]{./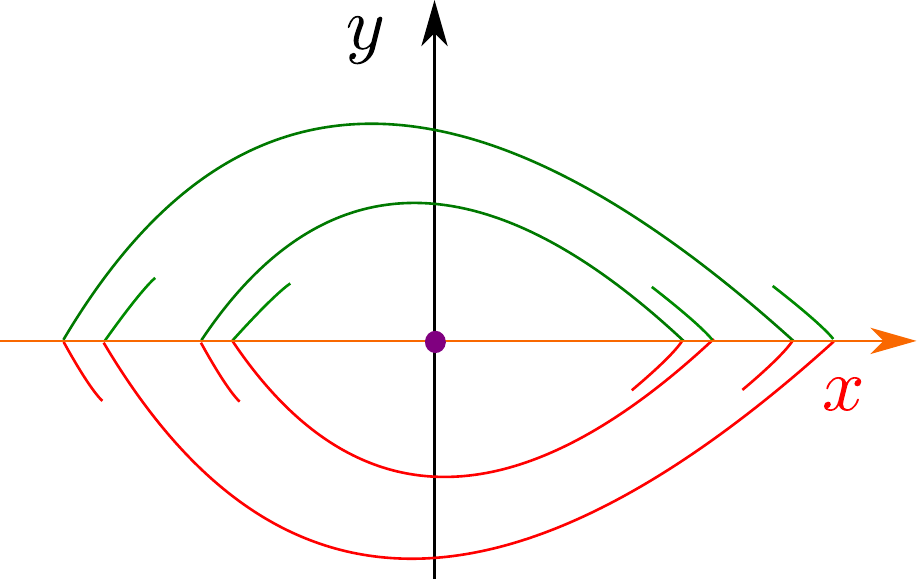}}
\end{center}
\caption{The three two-folds: visible-visible (a), visible-invisible (b) and invisible-invisible (c). We have deliberately not put arrows on the orbits of $Z_-$ (red) and $Z_+$ (green), because $\Sigma_{sl}$ and $\Sigma_{cr}$ depend on this direction. Notice $\Sigma$ (orange) is the $x$-axis in all figures. Following \cite{Kuznetsov2003} there are 7 cases, two visible-visible cases (called $VV_{1,2}$), three visible-invisible cases (called $VI_{1-3}$) and two invisible-invisible cases $II_{1,2}$). The case $VI_3$, which will be our main focus, is illustrated separately in \figref{twofoldVI3}.}
\figlab{twofold123}
\end{figure}

In the present paper, we will focus on the visible-invisible two-fold. In this case, \cite[Lemma 2.8]{bonet-reves2018a} shows that if $q$ is a visible-invisible two-fold for $\lambda=\lambda_0$, then locally
\begin{align*}
 \Sigma(\lambda_0) = \Sigma_{sl}(\lambda_0)\cup \{q\}
\end{align*}
whenever 
\begin{align}\eqlab{XpXncond}
X_+(q,\lambda_0)X_-(q,\lambda_0)<0.
\end{align}Consequently, $X_{sl}(x,\lambda_0)$ is in this case locally defined for all points on $\Sigma(\lambda_0)$ except $q$ (see \thmref{slidinghmtsd}). Notice in particular from the form \eqref{Xsl2} that $X_{sl}(x,\lambda_0)$ has a ``0/0'' at the two-fold. However, by \eqref{twofoldcond} and \eqref{XpXncond} we also have that
\begin{align}\eqlab{detZp}
   Y_+'-Y_-'\ne 0,
\end{align}
at $(q,\lambda_0)$, and consequently from \eqref{Xsl2} we see that $X_{sl}(x,\lambda_0)$ can be extended locally to all of $\Sigma$ by L'Hospital in this case. We collect the findings in the following proposition (fixing $q=0$ for simplicity).
\begin{proposition}\proplab{VI3}
Consider a PWS system \eqref{zZpm} in a sufficiently small neighborhood of the origin. Suppose furthermore that 
 \begin{align}\eqlab{finaltwofoldcond}
  \begin{cases}
  X_+ (0,\lambda_0 )&>0,\\ 
  Y_+(0,\lambda_0)&=0, \\
  Y_+'(0,\lambda_0)&> 0,
  \end{cases}\quad  \begin{cases}
  X_- (0,\lambda_0 )&< 0,\\ 
  Y_-(0,\lambda_0)&=0, \\
  Y_-'(0,\lambda_0)&<0.
  \end{cases}
  \end{align}
  Then the following holds about system \eqref{zZpm} for $\lambda=\lambda_0$:
  \begin{itemize}
  \item[(i)] The origin is a visible-invisible two-fold.
  \item[(ii)] $\Sigma=\overline{\Sigma_{sl}(\lambda_0)}$ with stable sliding for $x<0$ and unstable sliding for $x>0$.
  \item[(iii)] $X_{sl}(x,\lambda_0)$ is well-defined for all $x\in \Sigma$. 
  \item[(iv)] $(Y_+'-Y_-')(0,0,\lambda_0)>0$.
  \end{itemize}
\end{proposition}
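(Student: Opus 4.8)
The plan is to verify (i)--(iv) one at a time; each is a direct consequence of the definitions in items (4)--(7), \thmref{slidinghmtsd}, and the sign conditions \eqref{finaltwofoldcond}. The one computation worth isolating at the outset is the Lie-derivative identity for $h(x,y)=y$: here $Z_\pm(h)=Y_\pm$, and at any point with $Y_\pm=0$ one has $Z_\pm^2(h)=Z_\pm(Y_\pm)=X_\pm Y_\pm'+Y_\pm\,\partial_y Y_\pm = X_\pm Y_\pm'$. Evaluating at the origin for $\lambda=\lambda_0$ and using \eqref{finaltwofoldcond}, I get $Z_+(h)(0,\lambda_0)=Y_+(0,\lambda_0)=0$, $Z_+^2(h)(0,\lambda_0)=X_+(0,\lambda_0)Y_+'(0,\lambda_0)>0$, and $Z_+(0,\lambda_0)=(X_+(0,\lambda_0),0)\ne 0$; likewise $Z_-(h)(0,\lambda_0)=0$, $Z_-^2(h)(0,\lambda_0)=X_-(0,\lambda_0)Y_-'(0,\lambda_0)>0$, and $Z_-(0,\lambda_0)\ne 0$. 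Thus the conditions \eqref{twofoldcond} hold for both signs, so the origin is a two-fold; by the visibility criterion of item (5), $Z_+^2(h)(0,\lambda_0)>0$ makes the fold from above visible, while $Z_-^2(h)(0,\lambda_0)>0$ (i.e.\ not $<0$) makes the fold from below invisible. This proves (i).

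For (ii) I would expand along $\Sigma$: $Y_\pm(x,0,\lambda_0)=Y_\pm'(0,\lambda_0)\,x+\mathcal O(x^2)$. Since $Y_+'(0,\lambda_0)>0>Y_-'(0,\lambda_0)$, for $x\ne 0$ in a sufficiently small neighborhood of the origin one has $Y_+(x,0,\lambda_0)Y_-(x,0,\lambda_0)=Y_+'(0,\lambda_0)Y_-'(0,\lambda_0)\,x^2+\mathcal O(x^3)<0$, so every such point is a sliding point and hence $\Sigma=\overline{\Sigma_{sl}(\lambda_0)}$ locally. Reading off the signs, for $x<0$ we have $Y_+<0<Y_-$, which is stable sliding, and for $x>0$ we have $Y_+>0>Y_-$, which is unstable sliding, per items (1)--(2). (Equivalently, since $X_+(0,\lambda_0)X_-(0,\lambda_0)<0$, i.e.\ \eqref{XpXncond} holds, this is \cite[Lemma 2.8]{bonet-reves2018a}.)

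Item (iv) I would dispatch next, since it feeds (iii): $(Y_+'-Y_-')(0,0,\lambda_0)=Y_+'(0,\lambda_0)-Y_-'(0,\lambda_0)>0$ is immediate from \eqref{finaltwofoldcond}, and is precisely \eqref{detZp}. For (iii): by \thmref{slidinghmtsd}, $X_{sl}(\cdot,\lambda_0)$ is already defined on $\Sigma_{sl}(\lambda_0)$, so only the origin needs checking, and there \eqref{Xsl2} presents $X_{sl}$ as the quotient $\det Z(x,0,\lambda_0)/(Y_+-Y_-)(x,0,\lambda_0)$, with both $\det Z(x,0,\lambda_0)=(X_-Y_+-X_+Y_-)(x,0,\lambda_0)$ and $(Y_+-Y_-)(x,0,\lambda_0)$ vanishing at $x=0$. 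Since the denominator has $x$-derivative $(Y_+'-Y_-')(0,0,\lambda_0)\ne 0$ at $x=0$ by (iv), I would factor out the simple zero (Hadamard's lemma, or L'Hospital as indicated in the text preceding the proposition) to write $\det Z(x,0,\lambda_0)=x\,g_1(x)$ and $(Y_+-Y_-)(x,0,\lambda_0)=x\,g_2(x)$ with $g_2(0)=Y_+'(0,\lambda_0)-Y_-'(0,\lambda_0)\ne 0$; then $X_{sl}(x,\lambda_0)=g_1(x)/g_2(x)$ is smooth near $x=0$, with value $(\det Z)'(0,0,\lambda_0)/(Y_+'-Y_-')(0,0,\lambda_0)$ at the two-fold. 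This gives (iii).

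There is no real obstacle here: the proposition is a bookkeeping consequence of the definitions together with \thmref{slidinghmtsd} and the observation \eqref{detZp}. The only points that require a little care are (a) in (iii), to obtain a smooth (not merely continuous) extension of $X_{sl}$ across the two-fold one should factor out the simple zero rather than take a bare limit, and (b) throughout, the statement is local, so ``$\Sigma$'' means $\Sigma$ intersected with a sufficiently small neighborhood of the origin, and all the Taylor expansions above are used only there.
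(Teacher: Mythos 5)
Your proof is correct and follows essentially the same route as the paper, which presents the proposition as a summary of the preceding discussion: the Lie-derivative characterization $Z_\pm^2(h)=X_\pm Y_\pm'$ at the tangency for item (i), the sign of $Y_+Y_-$ along $\Sigma$ (the content of the cited Lemma 2.8 of Bonet-Rev\'es--M-Seara under $X_+X_-<0$) for item (ii), and the extension of $X_{sl}$ across the two-fold via L'Hospital using $(Y_+'-Y_-')(0,0,\lambda_0)\ne 0$ for items (iii)--(iv). Your only deviation is to verify the local sliding structure directly by Taylor expansion rather than citing the lemma, which is a routine filling-in of detail.
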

Henceforth, we suppose that \eqref{finaltwofoldcond} holds and that $X_{sl}(x,\lambda_0)>0$ for all $x\in \Sigma$, so that the flow of $X_{sl}$ takes points from stable sliding to unstable sliding. 
These conditions -- which following \propref{VI3} item (iv) and \eqref{Xsl2} imply that
\begin{align}
\text{det}\,Z'>0\eqlab{detZp2}
\end{align} 
 -- correspond to the specific visible-invisible two-fold called $VI_{3}$ in \cite{Kuznetsov2003}. See an illustration of this case in \figref{twofoldVI3}.

\begin{figure}[h!]
\begin{center}
{\includegraphics[width=.573\textwidth]{./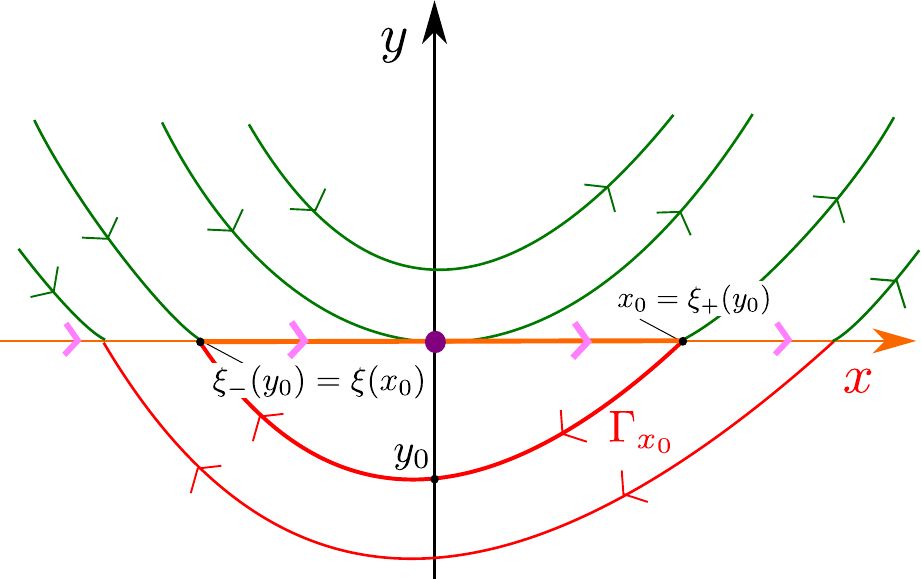}}
\end{center}
\caption{The $VI_3$ visible-invisible two-fold where $X_{sl}(x,\lambda_0)>0$ (in magenta) for all $x$ locally so that the stable sliding region $x<0$ is connected to the unstable sliding region by the flow of $X_{sl}$ (extended through $x=0$). $\xi$ and $\xi_\pm$ are used in relation to the slow divergence-integral in \secref{slowdiv}.}
\figlab{twofoldVI3}.
\end{figure}
We collect these assumptions and \eqref{versal} into the following hypothesis. 
\begin{enumerate}[resume*]
    \item \label{assD} Suppose that \eqref{versal} and \eqref{finaltwofoldcond} both hold and that there are $\mu_-<0$ and $\mu_+>0$ such that the PWS system \eqref{zZpm} for $\lambda=\lambda_0$ has stable sliding for all $x\in [\mu_-,0[$ and unstable sliding for $x\in ]0,\mu_+]$ and that $X_{sl}(x,\lambda_0)>0$ for all $x\in [\mu_-,\mu_+]$. Moreover, we assume that $\xi(x)\in [\mu_-,0[$, for each $x\in ]0,\mu_+]$, where $\xi(x)$ is the $x$-value of the first intersection with the $x$-axis of the forward flow of $(x,0)$ following $Z_-$ for $\lambda=\lambda_0$. 
\end{enumerate}
\subsection{Canards of \eqref{zZ}}\seclab{section-breaking}
In \cite{bonet-reves2018a,kristiansen2015a}, it was independently shown that under the assumption \ref{assD}, the two invariant manifolds for $x<0$ and $x>0$ (which are slow manifolds within the scaling regime defined by $y=\epsilon^2 y_2$, recall the proof of \thmref{slidinghmtsd}) intersect along some $\lambda\sim \lambda_0$ for all $0<\epsilon\ll 1$. Such orbits are also called canards. The reference \cite{kristiansen2015a} used the blowup method, which will also form the basis of our analysis. 

\section{The slow divergence-integral and canard limit cycles}\seclab{slowdiv}
Consider \eqref{zZ} satisfing \ref{assA}-\ref{assD}. For $\lambda=\lambda_0$ the singular limit (Filippov) system is shown in \figref{twofoldVI3}. The situation is clearly reminiscent of the classical canard situation. In particular, at the level $\lambda=\lambda_0$, we denote by $\Gamma_x$ for $x\in ]0,\mu_+]$, the limit periodic set consisting of the segment $[\xi(x),x]\subset \Sigma$ and the regular orbit of $Z_-$ connecting $(x,0)$ and $(\xi(x),0)$. We call $\Gamma_x$ a canard cycle. We then define the associated slow divergence-integral along the segment $[\xi(x),x]$:
\begin{align}\eqlab{slowdiv}
 I(x) =\int_{\xi(x)}^x \frac{(Y_+-Y_-)^2}{\text{det}\,Z}(u,0,\lambda_0) \phi'\left(\phi^{-1} \left(\frac{-Y_-}{Y_+-Y_-}(u,0,\lambda_0)\right)\right)du,
\end{align}
for $x\in ]0,\mu_+]$. The slow divergence-integral is the integral of the divergence of the vector field \eqref{divergence}, for $\epsilon=0$, computed along the critical manifold w.r.t. the slow time $\tau$ \response{defined by $d\tau=\frac{dx}{X_{sl}(x,\lambda_0)}$}.  It follows from \ref{assD} that $I$ in \eqref{slowdiv} is well-defined.
\smallskip

The following result plays a crucial role in proving \thmref{mainthm}.

\begin{theorem}\thmlab{mainslowdiv}
Let the regularized system \eqref{zZ} satisfy \ref{assA}-\ref{assD}. Suppose that $I(x)$ has exactly $k-1$ simple zeros $x_1<\dots<x_{k-1}$ in $]0,\mu_+[$. If $x_{k}\in ]x_{k-1},\mu_+]$, then there is a smooth function $\lambda=\lambda_c(\epsilon)$, with $\lambda_c(0)=\lambda_0$, such that $Z(z,\phi(y\epsilon^{-2}),\lambda_c(\epsilon))$ has $k$ periodic orbits $\mathcal{O}_1^\epsilon,\dots\mathcal{O}_{k}^\epsilon$, for each $\epsilon\sim 0$ and $\epsilon>0$. The periodic orbit $\mathcal{O}_i^\epsilon$ is isolated, hyperbolic and Hausdorff close to the canard cycle $\Gamma_{x_i}$, for each $i=1,\dots k$.
\end{theorem}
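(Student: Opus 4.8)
The plan is to follow the canard--cycle method of De Maesschalck--Dumortier--Roussarie \cite{DM-entryexit}: reduce the existence of limit cycles near each $\Gamma_{x_i}$ to the study of the zeros of a difference (displacement) map, and read those zeros off from the slow divergence-integral $I$. The underlying geometry is that in the scaling chart $y=\epsilon^2 y_2$ the system \eqref{zZ} becomes the slow--fast system \eqref{divergence}, whose critical manifold is the graph over the sliding region, attracting for $x<0$ and repelling for $x>0$, with the $VI_3$ two-fold at $x=0$ playing the role of a canard (turning) point. By \thmref{slidinghmtsd} and Fenichel theory there are slow manifolds $S_{a,\epsilon}$ over $[\mu_-,-\rho]$ and $S_{r,\epsilon}$ over $[\rho,\mu_+]$ for small $\rho>0$, and the first task is to continue them through a neighborhood of the origin. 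This is done by the cylindrical blow-up of the two-fold (as in \cite{kristiansen2015a}, see \secref{cylblowup}); the versality condition \eqref{versal} in \ref{assD} guarantees that $\lambda$ acts as a genuine canard-breaking parameter, so that there is a smooth $\lambda=\lambda_c(\epsilon)$, $\lambda_c(0)=\lambda_0$, for which the forward continuation of $S_{a,\epsilon}$ connects to $S_{r,\epsilon}$ (a maximal canard), with $\lambda_c(\epsilon)-\lambda_0$ exponentially small. The key output of this step is precise $C^1$ control of the transition map through the blow-up locus, in which the relevant transverse contraction/expansion is exactly the exponential of $\epsilon^{-2}$ times the slow divergence accumulated between entry and exit.

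Next I would assemble the return map near a fixed canard cycle $\Gamma_{x_i}$. Choosing transversal sections (one near the jump point $(x_i,0)$, one near the landing point $(\xi(x_i),0)$, one in $y<-c$ on the orbit of $Z_-$), the return map decomposes as: the regular transition along the orbit of $Z_-$, which by \lemmaref{regular} and smooth dependence on parameters is a high-order perturbation of a diffeomorphism depending only on $Z_-$; the strong Fenichel contraction onto $S_{a,\epsilon}$ over $[\xi(x_i)-\delta,-\rho]$; the canard passage through the two-fold supplied by the blow-up step; and the slow drift along $S_{r,\epsilon}$ over $[\rho,x_i]$. Collecting these contributions gives the difference map $\Delta(x,\lambda,\epsilon)$ measuring the failure of periodicity along the family $\{\Gamma_x\}$; this is the content of Proposition \ref{prop-diffmap}. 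Its essential structure is that $\Delta$ is affine in the breaking parameter $\lambda-\lambda_c(\epsilon)$ with non-vanishing coefficient, while the leading $x$-dependence (at $\lambda=\lambda_c(\epsilon)$) enters through the multiplier $\exp\!\big(\epsilon^{-2}(I(x)+o(1))\big)$ of the return map along $\Gamma_x$, uniformly and $C^1$ in $x$ on compact subsets of $]0,\mu_+]$; Appendices \ref{appendix-0} and \ref{appendix} are used precisely to make these $o(1)$ corrections uniform and differentiable.

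With Proposition \ref{prop-diffmap} available, the proof of \thmref{mainslowdiv} is a counting argument for the zeros of $\Delta(\cdot,\lambda_c(\epsilon),\epsilon)$. I would fix $x_k\in\,]x_{k-1},\mu_+]$ and then select $\lambda=\lambda_c(\epsilon)$ (within the exponentially narrow canard window) so that the maximal canard first leaves a neighborhood of the repelling sliding segment near $x=x_k$, i.e.\ so that $x_k$ is itself a simple zero of $\Delta(\cdot,\lambda_c(\epsilon),\epsilon)$; the non-degenerate $\lambda$-dependence makes this possible and produces the orbit $\mathcal O_k^\epsilon$. For $i<k$, the hypothesis that $x_1<\dots<x_{k-1}$ are \emph{simple} zeros of $I$ means $I$ changes sign at each $x_i$; since $\lambda_c(\epsilon)-\lambda_0$ is only exponentially small, this sign change (together with the regular part recorded in Proposition \ref{prop-diffmap}) forces an additional zero $x_i^\epsilon$ of $\Delta(\cdot,\lambda_c(\epsilon),\epsilon)$ with $x_i^\epsilon\to x_i$, and the bound $I'(x_i)\neq 0$ — transported through the chain rule, so that $\Delta$ crosses zero with speed of order $\epsilon^{-2}I'(x_i)$ — shows $x_i^\epsilon$ is simple; these sign-change zeros persist under the adjustment of $\lambda$ because the sign changes of $I$ are $\lambda$-independent and the adjustment is exponentially small. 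Thus $\Delta(\cdot,\lambda_c(\epsilon),\epsilon)$ has at least $k$ simple zeros $x_1^\epsilon,\dots,x_k^\epsilon$; a simple zero of the difference map is exactly a hyperbolic fixed point of the return map, hence an isolated hyperbolic periodic orbit $\mathcal O_i^\epsilon$, and since $x_i^\epsilon\to x_i$ and the whole construction lives in shrinking neighborhoods of $\Gamma_x$, we get $\mathcal O_i^\epsilon\to\Gamma_{x_i}$ in the Hausdorff sense, for all small $\epsilon>0$.

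The main obstacle is the derivation of Proposition \ref{prop-diffmap}: obtaining clean control of the return map near the $VI_3$ two-fold, which is the nonsmooth analogue of the classical canard point and cannot be treated by off-the-shelf Fenichel theory. One must resolve it by blow-up (this is where Appendices \ref{appendix-0} and \ref{appendix} do the real work), and — crucially — extract the $\exp\!\big(\epsilon^{-2}(I(x)+o(1))\big)$ behavior with the $o(1)$ term uniform and $C^1$ in $x$, since it is exactly this differentiable control on an exponentially small scale that lets a simple zero of $I$ be converted into a simple zero of $\Delta$, and hence into a hyperbolic limit cycle, and that allows $\lambda_c(\epsilon)$ to be chosen smoothly while respecting all $k$ zeros simultaneously. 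A secondary delicate point, internal to the counting step, is the sign-bookkeeping between the exponential term and the regular part of $\Delta$, which must be checked against the explicit form given by Proposition \ref{prop-diffmap}.
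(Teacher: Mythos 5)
Your proposal follows essentially the same route as the paper's proof: cylindrical blow-up of the $VI_3$ two-fold so as to land in the framework of \cite{DM-entryexit}, a difference map whose structure (a regular part depending nondegenerately on the breaking parameter plus exponential terms $\exp\left(\epsilon^{-2}(I_\pm+o(1))\right)$, with uniformity supplied by Appendices \ref{appendix-0} and \ref{appendix}) is exactly Proposition \ref{prop-diffmap}, the implicit function theorem to fix $\lambda_c(\epsilon)$, simple zeros of $I$ yielding $k-1$ hyperbolic cycles, and a tuning of the control function to produce the extra cycle near $\Gamma_{x_k}$. The only slip is incidental: $\lambda_c(\epsilon)$ is a smooth function with $\lambda_c(0)=\lambda_0$ obtained by solving $(f_--f_+)=0$, not in general exponentially close to $\lambda_0$ (only the further adjustment within the canard window is exponentially small), but your counting argument does not actually depend on that claim.
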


A result similar to \thmref{mainslowdiv} for smooth planar slow-fast systems can be found in \cite{SDICLE1,dumortier2011a}.

Notice that the statement of  \thmref{mainslowdiv} deals only with limit cycles of size $\mathcal{O}(1)$ in the $(x,y)$-phase space. Once the positive simple zeros of the slow divergence integral $I$ are detected, the related canard cycles $\Gamma_{x_1},\dots$ (and hence the limit cycles $\mathcal{O}_1^\epsilon,\dots$) are of size $\mathcal{O}(1)$. Thus, the limit cycles born from the origin $(x,y)=(0,0)$ are not covered by \thmref{mainslowdiv}.

We divide the proof of \thmref{mainslowdiv} into three parts. In the first part we  consider the extended fast-time system $(z',\epsilon')=(\epsilon^2 Z,0)$ and then gain smoothness by applying a cylindrical blow-up (see \secref{cylblowup}). Using the cylindrical blow-up we replace the discontinuity line $\Sigma$ of the PWS system \eqref{zZpm} with a half-cylinder and we show that near the canard trajectories on the top of the cylinder, we are in the framework of \cite{DM-entryexit}. In \cite{DM-entryexit} a very general smooth planar slow-fast model has been studied containing a normally attracting branch of singularities, a normally repelling branch of singularities and a turning point between them (an additional critical curve passing through the turning point is possible). One usually uses the results of \cite{DM-entryexit} for specific slow-fast families by checking the assumptions in \cite{DM-entryexit} (see for example \cite{DDRR,HuzakPrey,Zhu}). We do the same here. In the second part (\secref{diffmap}) we find the structure of the difference map of \eqref{zZ} near $\Gamma_{x}$ using \cite{DM-entryexit} and Proposition \ref{prop-passage-hyperbolic} (Appendix \ref{appendix}) near the hyperbolic edge of the cylinder. In the third  part (\secref{conclusions}) we establish a \response{one-to-one} correspondence between simple zeros of the slow divergence-integral \eqref{slowdiv} and simple zeros of the difference map by choosing a suitable control function $\lambda=\lambda_c(\epsilon)$, following \cite{Benoit,DM-entryexit}.

\subsection{Cylindrical blow-up}\seclab{cylblowup} First we introduce the following scaling:
\[\lambda=\lambda_0+\epsilon\widetilde{\lambda}\]
where $\widetilde{\lambda}\sim 0$ is called a regular breaking parameter. 
We study the system $Z$ given in \eqref{zZ} in nonsmooth limit $\varepsilon\rightarrow 0$ in the classical way, see e.g. \cite{kristiansen2018a}. We consider the extended fast-time system $(z',\epsilon')=(\epsilon^2 Z,0)$ and apply the cylindrical blow-up
\begin{align}\eqlab{blowup1}
 (r,(\bar y,\bar \epsilon))\mapsto \begin{cases}
                                    y&=r^2\bar y,\\
                                    \epsilon &=r\bar \epsilon,
                                   \end{cases}
\end{align}
with $r\ge 0$, $(\bar y,\bar \epsilon)\in \mathbb{S}^1$ and $\bar\epsilon\ge 0$. Let $\overline F$ denote the vector field on $(x,r,(\bar y,\bar \epsilon))$, i.e. the pullback of $(\epsilon^2 Z,0)$ under \eqref{blowup1}. We then perform desingularization by division of the right hand side by $\bar \epsilon^2$. In other words, it is $\widehat F:=\bar \epsilon^{-2} \overline F$ that we shall study. To study the dynamics of $\widehat F$ in a neighborhood of the cylinder, we use different charts. Based upon Section \ref{section-primary-fam} and Section \ref{section-primary-phase}, we illustrate the transformation and the properties of $\widehat F$ in \figref{blowup}(a). 
\begin{figure}[h!]
 	\begin{center}
 		\subfigure[]{\includegraphics[width=.9\textwidth]{./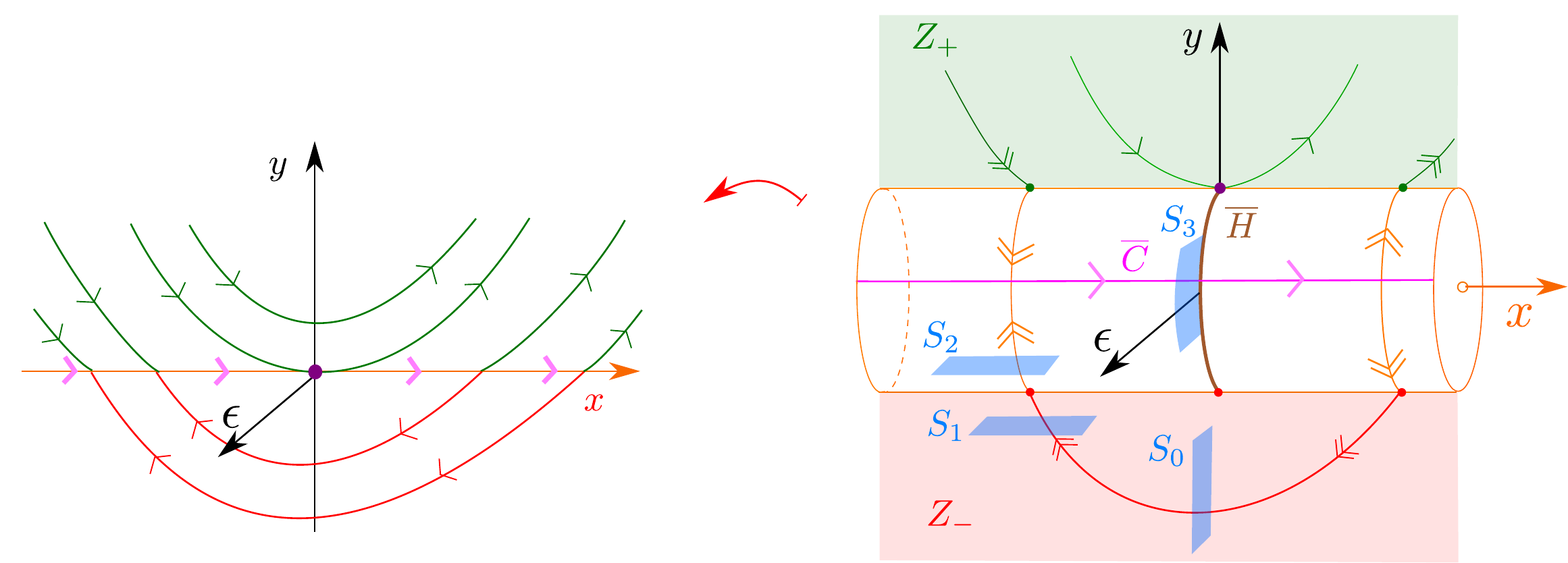}}
 		\subfigure[]{\includegraphics[width=.905\textwidth]{./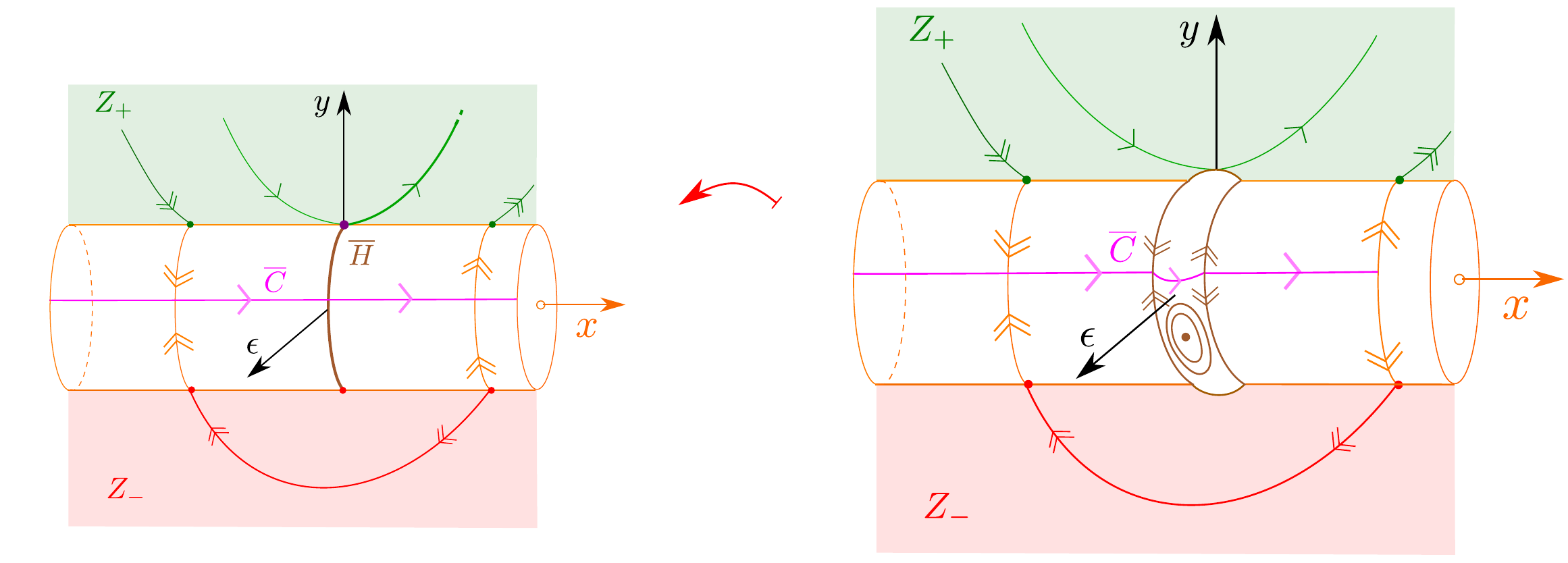}}
 		\caption{The two consecutive blowup transformations. In (a): Under the assumptions \ref{assA}-\ref{assC} we gain smoothness of $y=0$ at $\epsilon=0$ through a cylindrical blowup transformation. On the blow-up cylinder we find two critical sets $\overline C$ and $\overline H$, the former being normally hyperbolic away from the intersection with $\overline H$. The section $S_i$, $i=0,\ldots,3$ are used in the proof of Proposition \ref{prop-diffmap}. In (b): We blowup $\overline H$ through another cylindrical blowup transformation. In this way, we gain hyperbolicity of $\overline C$. Hyperbolic directions are indicated by double-headed arrows whereas single headed arrows are slow or nonhyperbolic directions. }\figlab{blowup}
 	\end{center}
 \end{figure}

\subsubsection{Dynamics in the scaling chart $\bar \epsilon=1$}\label{section-primary-fam}

We consider the chart-specific coordinate $y_2$ defined by $y=\epsilon^2 y_2$, with $(x,y_2)$ kept in a large compact subset of $\mathbb{R}^2$, $\epsilon> 0$ and $\epsilon\sim 0$. Inserting this into the extended system $(z',\epsilon')=(\epsilon^2 Z,0)$ produces the following equations: 
\begin{equation}\eqlab{global}
\begin{aligned}
 \dot x &=\epsilon^2\left(X_+(x,\epsilon^2y_2,\lambda_0+\epsilon\widetilde{\lambda})\phi(y_2)+X_-(x,\epsilon^2y_2,\lambda_0+\epsilon\widetilde{\lambda})(1-\phi(y_2))\right),\\
 \dot y_2 &=Y_+(x,\epsilon^2y_2,\lambda_0+\epsilon\widetilde{\lambda})\phi(y_2)+Y_-(x,\epsilon^2y_2,\lambda_0+\epsilon\widetilde{\lambda})(1-\phi(y_2)).
\end{aligned}
 \end{equation}
When $\epsilon=0$, system \eqref{global} becomes 
\begin{equation}\eqlab{globaleps=0}
\begin{aligned}
 \dot x &=0,\\
 \dot y_2 &=Y_+(x,0,\lambda_0)\phi(y_2)+Y_-(x,0,\lambda_0)(1-\phi(y_2)).
\end{aligned}
 \end{equation}
 The critical set of \eqref{globaleps=0} is given by the union of two critical manifolds:
  \begin{equation}\label{manifold-H}
  \overline H:=\{(0,y_2):y_2\in\mathbb{R}\}
 \nonumber
 \end{equation}
and the curve $\overline C$ given by
 \begin{equation}\label{manifold-C2}
 y_2=\phi^{-1}\left(\frac{-Y_-}{Y_+-Y_-}(x,0,\lambda_0)\right).\nonumber
 \end{equation}
 Notice that at the point 
  \[p_0=\left(0, \phi^{-1}\left(\frac{-Y_-'}{Y_+'-Y_-'}(0,0,\lambda_0)\right)\right)\]
 an intersection of $\overline H$ and $\overline C$ appears. All the singularities on $\overline H$ are nilpotent
except for $p_0$ which is linearly zero. In the rest of this section we show that the slow-fast system \eqref{global} satisfies Assumptions T0--T6 in \cite{DM-entryexit} along the critical curve $\overline{C}$. Then we can use \cite[Theorem 4]{DM-entryexit} and prove Proposition \ref{prop-diffmap} in \secref{diffmap}. Theorem 4 says that the leading term of the integral of divergence of the vector field \eqref{global}, computed along canard orbits near $\overline{C}$ between $\xi(x)<0$ and $x>0$, is $\frac{I(x)}{\epsilon^2}$ with $I$ defined in \eqref{slowdiv} (see for example the exponent of the exponential term in (\ref{equ-assumption})). This term remains dominant in the expression for the difference map of \eqref{zZ} near $\Gamma_{x}$, see \eqref{diff-form-pm}.

 The singularities on $\overline C$ are normally attracting when $x<0$, normally repelling when $x>0$ and the slow dynamics $X_{sl}(x,\lambda_0)$ -- given in \eqref{Xsl2} -- is regular, pointing from the attracting part to the repelling part of $\overline{C}$. Thus, if we denote the vector field in \eqref{global} by $\widehat{F}_S$ and if $M_x$ is any local $C^n$ center manifold of $\widehat{F}_S^1:=\widehat{F}_S+0\frac{\partial}{\partial\epsilon}$ at normally hyperbolic singularity $x\in \overline{C}$, then $\frac{1}{\epsilon^2}\widehat{F}_S^1|_{M_x}$ is a local flow box containing $\overline C$ and pointing from the left to the right. (The exponent $2$ in the term $\epsilon^2$ is often called the order of degeneracy.) This implies that Assumptions T0--T2 of \cite{DM-entryexit} are satisfied. 
 It remains to show that \eqref{global} satisfies Assumptions T3--T6 of \cite{DM-entryexit} in an $(\epsilon,\widetilde\lambda)$-uniform neighborhood of the turning point $p_0$. In order to do that, we have to blow up the degenerate line $\overline H=\{x=0\}$, inside the slow-fast system \eqref{global}, to a half-cylinder (see \figref{blowup}(b)). For the sake of readability of \secref{slowdiv}, we prove that Assumptions T3--T6 are satisfied in Appendix \ref{appendix-0}.
 
 \smallskip

\subsubsection{Dynamics in the phase directional charts $\bar y=\pm 1$}\label{section-primary-phase} We keep $x\in[\mu_-,\mu_+]$ uniformly away from $x=0$. In the chart $\bar y=-1$ associated with \eqref{blowup1} and the chart-specific coordinates $(r_1,\epsilon_1)$ such that $(y,\epsilon)=(-r_1^2,r_1\epsilon_1)$ the extended system $(z',\epsilon')=(\epsilon^2 Z,0)$ becomes (after division by $\epsilon_1^2>0$):
\begin{equation}\eqlab{PDChart-1}
\begin{aligned}
    \dot x &= r_1^2 \left(X_+(x,-r_1^2,\lambda_0+r_1\epsilon_1\widetilde{\lambda})\phi_-(\epsilon_1^2)+X_-(x,-r_1^2,\lambda_0+r_1\epsilon_1\widetilde{\lambda})(1-\phi_-(\epsilon_1^2))\right),\\
    \dot r_1 &=-\frac{1}{2}r_1 \left(Y_+(x,-r_1^2,\lambda_0+r_1\epsilon_1\widetilde{\lambda})\phi_-(\epsilon_1^2)+Y_-(x,-r_1^2,\lambda_0+r_1\epsilon_1\widetilde{\lambda})(1-\phi_-(\epsilon_1^2))\right),\\
    \dot \epsilon_1 &=\frac{1}{2}\epsilon_1 \left(Y_+(x,-r_1^2,\lambda_0+r_1\epsilon_1\widetilde{\lambda})\phi_-(\epsilon_1^2)+Y_-(x,-r_1^2,\lambda_0+r_1\epsilon_1\widetilde{\lambda})(1-\phi_-(\epsilon_1^2))\right)
\end{aligned}
\end{equation}
where $\phi_-$ is defined in \ref{assC}. The edge of the cylinder, corresponding to $r_1=\epsilon_1=0$, consists of semi-hyperbolic singularities of \eqref{PDChart-1}. The eigenvalues of the linearization at $(x,0,0)$ are given by $(0,-\frac{Y_-(x,0,\lambda_0)}{2},\frac{Y_-(x,0,\lambda_0)}{2})$. Let's recall that $Y_-(x,0,\lambda_0)<0$ when $x>0$ and $Y_-(x,0,\lambda_0)>0$ when $x<0$. The form of the transition map near the edge of the cylinder, with $x<0$ (resp. $x>0$), by following the orbits of \eqref{PDChart-1} in forward (resp. backward) time is given in Proposition \ref{prop-passage-hyperbolic} in Appendix \ref{appendix}.

Although the phase directional chart $\bar{y}=1$ is not relevant to the present study, we include it here for sake of completeness. Writing $(y,\epsilon)=(r_2^2,r_2\epsilon_2)$, the  extended system changes (after division by $\epsilon_2^2>0$) into 
\begin{equation}\eqlab{PDChart+1}
\begin{aligned}
    \dot x &= r_2^2 \left(X_+(x,r_2^2,\lambda_0+r_1\epsilon_1\widetilde{\lambda})\phi_+(\epsilon_2^2)+X_-(x,r_2^2,\lambda_0+r_1\epsilon_1\widetilde{\lambda})(1-\phi_+(\epsilon_2^2))\right),\\
    \dot r_2 &=\frac{1}{2}r_2 \left(Y_+(x,r_2^2,\lambda_0+r_1\epsilon_1\widetilde{\lambda})\phi_+(\epsilon_2^2)+Y_-(x,r_2^2,\lambda_0+r_1\epsilon_1\widetilde{\lambda})(1-\phi_+(\epsilon_2^2))\right),\\
    \dot \epsilon_2 &=-\frac{1}{2}\epsilon_2 \left(Y_+(x,r_2^2,\lambda_0+r_1\epsilon_1\widetilde{\lambda})\phi_+(\epsilon_2^2)+Y_-(x,r_2^2,\lambda_0+r_1\epsilon_1\widetilde{\lambda})(1-\phi_+(\epsilon_2^2))\right),
\end{aligned}
\end{equation}
with $\phi_+$ introduced in \ref{assC}. The study of \eqref{PDChart+1} near $r_2=\epsilon_2=0$ is similar to the study of \eqref{PDChart-1} near $r_1=\epsilon_1=0$. The points $(x,0,0)$, for $x\ne 0$, are semi-hyperbolic singularities of \eqref{PDChart+1} with eigenvalues $(0,\frac{Y_+(x,0,\lambda_0)}{2},-\frac{Y_+(x,0,\lambda_0)}{2})$.

\subsection{The difference map}\seclab{diffmap} Denote by $\xi_-(y)<0$ (resp. $\xi_+(y)>0$), with $y<0$, the $x$-value of the intersection with the $x$-axis  of the forward (resp. backward) flow of $(0,y)$ following $Z_-$, for $\lambda=\lambda_0$. Let $\mu_1<\mu_2<0$ be arbitrary and fixed real numbers such that $\xi_+([\mu_1,\mu_2])\subset ]0,\mu_+[$ (and hence $\xi_-([\mu_1,\mu_2])\subset ]\mu_-,0[$ by \ref{assD}). We define a section $S_0\subset\{x=0\}$ parametrized by $y\in [\mu_1,\mu_2]$ and $\epsilon\in [0,\epsilon_0]$ where $\epsilon_0$ is a small positive constant. We also define a section $S_3\subset\{x=0\}$, parametrized by $y_2\sim \phi^{-1}\left(\frac{-Y_-'}{Y_+'-Y_-'}(0,0,\lambda_0)\right)$ and $\epsilon\in [0,\epsilon_0]$, where the coordinate $y_2$ is introduced in Section \ref{section-primary-fam}. We denote by $\Delta_{-}$ (resp. $\Delta_+$) the transition map between $S_0$ and $S_3$ following the trajectories of the blown-up vector field $\widehat{F}$ in forward (resp. backward) time. It is clear that the zeros of the difference map
\begin{equation}\label{difference-map}
y_2=\Delta(y,\epsilon,\widetilde\lambda):=\Delta_-(y,\epsilon,\widetilde\lambda)-\Delta_+(y,\epsilon,\widetilde\lambda),
\end{equation}
with $\epsilon>0$,
correspond to periodic orbits of \eqref{zZ}.

\begin{proposition}\label{prop-diffmap}
The transition maps $\Delta_{\pm}$ have the following form:
\begin{equation}\eqlab{diff-form-pm}
 \Delta_\pm(y,\epsilon,\widetilde\lambda)=f_{\pm}(\epsilon,\widetilde\lambda)-\exp\frac{1}{\epsilon^2}\left(I_\pm(y)+o_\pm(1)\right), \ y\in[\mu_1,\mu_2],\ (\epsilon,\widetilde\lambda)\sim(0,0),
\end{equation}
where $f_\pm$ are smooth functions, $(f_--f_+)(0,0)=0$, $\frac{\partial(f_--f_+)}{\partial\widetilde\lambda}(0,0)\ne 0$, $o_\pm(1)$  tend to zero as $\epsilon\to 0$, uniformly in $(y,\widetilde\lambda)$, and where 
 \begin{equation}\label{SDI}
 \begin{aligned}
     I_\pm(y)=&\int_{\xi_\pm(y)}^0\frac{(Y_+-Y_-)^2}{\textnormal{det}\,Z}(x,0,\lambda_0) \phi'\left(\phi^{-1} \left(\frac{-Y_-}{Y_+-Y_-}(x,0,\lambda_0)\right)\right)dx<0.
 \end{aligned}
 \end{equation}
\end{proposition}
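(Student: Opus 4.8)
The plan is to decompose each transition map $\Delta_\pm$ into three pieces, corresponding to the three sections $S_0\to S_1\to S_2\to S_3$ depicted in Figure~\ref{fig:blowup}(a), and then to identify the dominant contribution. First I would describe the passage from $S_0$ (on $\{x=0\}$, parametrized by $y<0$ at scale $r$) through the phase directional chart $\bar y=-1$, i.e.\ system \eqref{eq:PDChart-1}, up to a section $S_1$ transverse to $\overline C$ in the normally hyperbolic regime $x<0$; for $\Delta_+$ one does the analogous passage in backward time for $x>0$. This is governed by the semi-hyperbolic edge of the cylinder, whose linearization has eigenvalues $(0,-\tfrac12 Y_-(x,0,\lambda_0),\tfrac12 Y_-(x,0,\lambda_0))$, and the relevant transition map is supplied by Proposition~\ref{prop-passage-hyperbolic} in Appendix~\ref{appendix}: it gives a map of the form ``regular part minus an exponentially contracting term'' because the fibers over $x<0$ are strongly attracting (and, after time reversal, the fibers over $x>0$ behave the same way for $\Delta_+$). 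The orbit enters $S_1$ exponentially close to $\overline C$.

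Second, I would handle the central passage $S_1\to S_2$ along the normally hyperbolic branch $\overline C$ through the turning point $p_0$, staying in the scaling chart $\bar\epsilon=1$, i.e.\ system \eqref{eq:global}. This is exactly the configuration covered by \cite{DM-entryexit}: Assumptions T0--T2 were verified in \secref{slowdiv} and T3--T6 in Appendix~\ref{appendix-0}, so \cite[Theorem~4]{DM-entryexit} applies. That theorem produces the transition map along a canard orbit between the entry value $\xi_\pm(y)<0$ and the exit value, and its leading behaviour is controlled by $\exp\bigl(\tfrac1{\epsilon^2}\int \operatorname{div}\bigr)$ with the divergence integrated along the critical manifold with respect to the slow time; computing that integral of the divergence of \eqref{eq:global} along $\overline C$ between $\xi_\pm(y)$ and $0$ yields precisely $I_\pm(y)/\epsilon^2$ with $I_\pm$ as in \eqref{eq:SDI}. (The sign $I_\pm(y)<0$ follows from \ref{assD} and \eqref{eq:detZp2}: the integrand is $\tfrac{(Y_+-Y_-)^2}{\det Z}\,\phi'(\cdots)>0$ on $]\xi_\pm(y),0[$ while $\xi_\pm(y)<0$, so integrating from $\xi_\pm(y)$ up to $0$ gives a positive quantity with a minus sign in front — equivalently one integrates the strictly positive slow-divergence density over the attracting part where the contribution to the exit-map is negative.) Finally, the passage $S_2\to S_3$ back out near $p_0$ and the collection of all constants and higher-order terms into the smooth functions $f_\pm(\epsilon,\widetilde\lambda)$ is routine; the breaking parameter enters here, and the versality condition \eqref{eq:versal} translates into $\tfrac{\partial (f_--f_+)}{\partial\widetilde\lambda}(0,0)\ne 0$, while $(f_--f_+)(0,0)=0$ records that at $\widetilde\lambda=0$ the two slow manifolds meet (the canard connection of \secref{section-breaking}). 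Composing the three maps, the exponentially small terms from the first and third passages are absorbed into the $o_\pm(1)$ inside the exponent of the dominant middle term, giving \eqref{eq:diff-form-pm}.

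The main obstacle I anticipate is the bookkeeping at the two gluing sections $S_1$ and $S_2$: one must check that the entry/exit data handed between the Appendix~\ref{appendix} hyperbolic-passage map and the \cite{DM-entryexit} central map match up (same parametrizations, same order of degeneracy $2$ in $\epsilon^2$), and that the exponentially small corrections coming from the edge-of-cylinder passages are genuinely dominated by — hence can be hidden inside — the $\tfrac1{\epsilon^2}o_\pm(1)$ term rather than competing with $I_\pm(y)$. Verifying that the leading coefficient from \cite[Theorem~4]{DM-entryexit} is nonzero and that no resonance terms spoil the $\exp(\tfrac1{\epsilon^2}(I_\pm+o_\pm(1)))$ form across the turning point is the delicate point; everything else is a matter of carefully transporting coordinates through the charts of Figure~\ref{fig:blowup}.
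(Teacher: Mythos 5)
Your proposal follows essentially the same route as the paper's proof: split each $\Delta_\pm$ into a regular passage away from the blow-up cylinder, a corner passage past the hyperbolic edge in the chart $\bar y=-1$ handled by Proposition \ref{prop-passage-hyperbolic} (which contributes a smooth map plus an $\mathcal O(\epsilon\log\epsilon^{-1})$ correction, not an exponentially small one -- but that correction is indeed what gets absorbed into $o_\pm(1)$ in the exponent), and the central passage along $\overline C$ through $p_0$ in the scaling chart handled by \cite[Theorem 4]{DM-entryexit}, with Assumptions T0--T2 checked in Section \ref{section-primary-fam}, T3--T6 in Appendix \ref{appendix-0}, and the versality condition \eqref{versal} entering through T6 to give $(f_--f_+)(0,0)=0$ and $\frac{\partial(f_--f_+)}{\partial\widetilde\lambda}(0,0)\ne 0$. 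One correction to your parenthetical sign argument for $I_\pm(y)<0$: it is only $\xi_-(y)$ that is negative ($\xi_+(y)>0$ by definition), and on $]\xi_-(y),0[$ the integrand is \emph{negative}, not positive, since stable sliding for $x<0$ gives $Y_+<0<Y_-$ and hence $\det Z=X_{sl}\,(Y_+-Y_-)<0$ there; this yields $I_-(y)<0$ directly, while for $I_+$ the integrand is positive on $]0,\xi_+(y)[$ and the orientation $\int_{\xi_+(y)}^0=-\int_0^{\xi_+(y)}$ supplies the minus sign. The conclusion you assert is therefore correct, but as stated your sign bookkeeping would not survive scrutiny; with that repaired, the argument matches the paper's.
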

\begin{proof}
We treat the forward transition map $\Delta_-$ (the backward transition map $\Delta_+$ can be studied in similar fashion). We split up the forward transition map $\Delta_-$ between $S_0$ and $S_3$ in three parts (see sketch of sections in \figref{blowup}(a)):\\
(a) We define a section $S_1\subset \{r_1=r_{10}\}$ parametrized by $x\in J\subset [\mu_-,0[$, $J$ being a segment, and $\epsilon_1\in[0,\frac{\epsilon_0}{r_{10}}]$ where $r_{10}>0$ is a small constant and $(x,r_1,\epsilon_1)$ are the coordinates of \eqref{PDChart-1}. The segment $J$ is chosen large enough such that the transition map $x=\Delta_{01}(y,\epsilon,\widetilde\lambda)$ between $S_0$ and $S_1$ is well defined. Notice that $\epsilon_1=\frac{\epsilon}{r_{10}}$. Since $Z_-$ has no singularities between $S_0$ and $S_1$ and the passage between $S_0$ and $S_1$ is located outside a fixed neighborhood of $y=0$, it is clear that $\Delta_{01}$ is smooth in $(y,\epsilon,\widetilde\lambda)$ (see also Lemma \ref{lemma-regular}).\\
(b) Define a section $S_2\subset \{\epsilon_1=\epsilon_{10}\}$ parametrized by $\bar x\in\bar J\subset [\mu_-,0[$, $\bar J$ being a segment and $\bar r_1\in [0,\frac{\epsilon_0}{{\epsilon}_{10}}]$, with a small positive constant $\epsilon_{10}$. Following Proposition \ref{prop-passage-hyperbolic}, the transition map $\bar x=\Delta_{12}(x,\epsilon_1,\widetilde\lambda)$  between $S_1$ and $S_2$ w.r.t. \eqref{PDChart-1} can be written as 
\[\Delta_{12}(x,\frac{\epsilon}{r_{10}},\widetilde\lambda)=g_{12}(x,\widetilde\lambda)+O(\epsilon\log\epsilon^{-1}), \ \epsilon \to 0.\]
Notice that $\bar{r}_1=\frac{r_{10}\epsilon_1}{\epsilon_{10}}=\frac{\epsilon}{\epsilon_{10}}$.\\
(c) The transition map $y_2=\Delta_{23}(\bar{x},\epsilon,\widetilde\lambda)$ between $S_2$ and $S_3$ following the trajectories of the smooth slow-fast system \eqref{global} has the following form (see \cite[Theorem 4]{DM-entryexit}):
\begin{equation}\label{equ-assumption}
\Delta_{23}(\bar{x},\epsilon,\widetilde\lambda)=f_-(\epsilon,\widetilde\lambda)-\exp\frac{1}{\epsilon^2}\left(\bar I(\bar{x})+\kappa_1(\bar{x},\epsilon,\widetilde\lambda)+\kappa_2(\epsilon,\widetilde\lambda)\epsilon^2\log\epsilon\right)
\end{equation}
where $f_-$, $\kappa_1$ and $\kappa_2$ are smooth, including at $\epsilon=0$, $\kappa_1=O(\epsilon)$ and $\bar{I}(\bar{x})<0$ is the slow divergence-integral of the form (\ref{SDI}) computed between $\bar{x}$ and $0$. We have the negative sign in front of the exponential term due to the chosen parametrization of $S_2$ and $S_3$.

Combining (a), (b) and (c), we obtain \eqref{diff-form-pm}. We use that $g_{12}(\Delta_{01}(y,0,0),0)=\xi_-(y)$. Since Assumption T6 of \cite{DM-entryexit} is satisfied (see Section \ref{appendix-0}), the function $f_--f_+$ has the property given in Proposition \ref{prop-diffmap}, where $f_+$ is obtained in a similar way by studying the backward transition map $\Delta_+$. 
\end{proof}

\subsection{Conclusions}\seclab{conclusions} Suppose that $I(x)$, defined in \eqref{slowdiv}, has exactly $k-1$ simple zeros $x_1<\dots<x_{k-1}$ in $]0,\mu_+[$. Let the segment $[\mu_1,\mu_2]$ from \secref{diffmap} be large enough such that $x_1,\dots,x_{k-1}\in\xi_+([\mu_1,\mu_2])$.
Using the property of $f_--f_+$ given in Proposition \ref{prop-diffmap} ($\widetilde\lambda$ is the breaking parameter) and the implicit function theorem, we find a smooth function $\widetilde\lambda=\widetilde\lambda_c(\epsilon)$, with $\widetilde\lambda_c(0)=0$, such that $(f_--f_+)(\epsilon,\widetilde\lambda_c(\epsilon))=0$ for all small $\epsilon\ge 0$. Now, the difference map $\Delta$, given in (\ref{difference-map}), can be written as
\[\Delta(y,\epsilon,\widetilde\lambda_c(\epsilon))=\exp\frac{1}{\epsilon^2}\left(I_+(y)+o_+(1)\right)-\exp\frac{1}{\epsilon^2}\left(I_-(y)+o_-(1)\right)\]
for new functions $o_\pm(1)$ tending to zero as $\epsilon\to 0$, uniformly in $y$. This implies that the zeros of $\Delta(y,\epsilon,\widetilde\lambda_c(\epsilon))$ w.r.t. $y$ are solutions of the equation
\begin{equation}\label{equation-SDI-zeros}
 I_-(y)-I_+(y)+o(1)=0,   
\end{equation}
where $o(1)\to 0$ when $\epsilon\to 0$ (uniformly in $y$). Notice that $\xi(\xi_+(y))=\xi_-(y)$, and therefore $I_-(y)-I_+(y)=I(\xi_+(y))$. We conclude that $y_1,\dots,y_{k-1}$, defined by $\xi_+(y_i)=x_i$, are simple zeros of $I_--I_+$ ($\xi_+$ is a diffeomorphism). Using the implicit function theorem once more, we find that (\ref{equation-SDI-zeros}) has $k-1$ simple solutions for each small $\epsilon>0$, perturbing from $y_1,\dots,y_{k-1}$. They correspond to hyperbolic canard limit cycles of $Z(z,\phi(y\epsilon^{-2}),\lambda_0+\epsilon\widetilde\lambda_c(\epsilon))$ close to $\Gamma_{x_1},\dots,\Gamma_{x_{k-1}}$. It is not difficult to see that using the control function $\widetilde\lambda_c(\epsilon)$ we can construct one extra hyperbolic limit cycle, Hausdorff close to $\Gamma_{x_k}$, with $x_k\in ]x_{k-1},\mu_+[$, surrounding the $k-1$ limit cycles (see \cite{SDICLE1,dumortier2011a}). This completes the proof of \thmref{mainslowdiv}.

\begin{remark}
Notice that the parameter $\lambda$ in our model \eqref{zZ} is one-dimensional and we don't need additional parameters in the statement of \thmref{mainslowdiv} to prove \thmref{mainthm}. Of course \thmref{mainslowdiv} remains true if $Z_\pm$ depend smoothly on  finite-dimensional extra parameter. 
\end{remark}

\begin{remark}\label{remark-Prop-important}
Suppose that the slow divergence-integral $I$ has a simple zero at $x=x_0\in ]0,\mu_+]$. Then for each small $\epsilon>0$, the $\lambda$-family  $Z(z,\phi(y\epsilon^{-2}),\lambda)$ undergoes a saddle-node bifurcation of limit cycles near $\Gamma_{x_0}$ as we vary $\lambda\sim\lambda_0$. Notice that the parameter $\lambda$ in this result--as opposed to \thmref{mainslowdiv} with unbroken $\lambda$-- becomes broken. If the slow divergence-integral $I$ has a zero of multiplicity $l\ge 1$ at $x=x_0$, then $Z(z,\phi(y\epsilon^{-2}),\lambda)$ can have at most $l+1$ limit cycles (counting multiplicity) Hausdorff close to $\Gamma_{x_0}$ for each small $\epsilon>0$ and $\lambda\sim \lambda_0$, and, if $I(x_0)<0$ (resp. $I(x_0)>0$), then at most one limit cycle can be born from $\Gamma_{x_0}$. The limit cycle, if it exists, is hyperbolic and attracting (resp. repelling). These results can be proved by using Proposition \ref{prop-diffmap}. The proof is similar to the proof of \cite[Theorem 4.3]{dumortier2011a}.  \end{remark}

\section{Proof of \thmref{mainthm}}\seclab{final}

\response{To prove \thmref{mainthm} we now use} \thmref{mainslowdiv}. We consider $Z_\pm(\cdot,\lambda)$ and suppose that \ref{assD} holds with $\lambda_0=0$. Moreover, we will suppose that $Z_-$ is invariant under the symmetry $(x,t)\mapsto (-x,-t)$ for $\lambda=0$: 

\begin{enumerate}[resume*]
    \item \label{assE} Let $\Gamma(x,y)=(-x,y)$ then we assume $D\Gamma^{-1} (Z_- \circ \Gamma) = -Z_-$ for $\lambda=0$. 
\end{enumerate}

Based upon the following simple result, this leads to a significant simplification of the calculations that follow.  
\begin{lemma}
Assume that {assumption \ref{assE}} holds. Then $\xi(x)=-x$, recall \eqref{slowdiv}, and $I(x)$ has a smooth extension onto a neighborhood of $x=0$ which is an odd function in $x$. 
\end{lemma}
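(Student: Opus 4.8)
The plan is to prove the two assertions in turn: first the geometric identity $\xi(x)=-x$, for which assumption \ref{assE} is the crucial input, and then the smooth, odd extension of $I$, which will be a formal consequence once $\xi(x)=-x$ is known.

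For the first part I would read assumption \ref{assE} as saying that the involution $\Gamma(x,y)=(-x,y)$, whose fixed-point set is $\{x=0\}$, conjugates the flow $\Phi_t$ of $Z_-$ (at $\lambda=0$) to its time reverse: $\Gamma\circ\Phi_t=\Phi_{-t}\circ\Gamma$. Fixing $x\in\,]0,\mu_+]$ and writing $\gamma(t)=\Phi_t(x,0)$, assumption \ref{assD} provides a first time $T>0$ at which $\gamma$ meets the $x$-axis, with $\gamma(T)=(\xi(x),0)$ and $\xi(x)<0$. Since $\gamma$ then passes from $\{x>0\}$ to $\{x<0\}$, it must cross $\mathrm{Fix}(\Gamma)$ at some $t^*\in\,]0,T[$, at a point $\gamma(t^*)=(0,y^*)$ with $y^*\neq 0$ (as $\gamma$ avoids the axis on $\,]0,T[$). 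I would then use that the orbit through a fixed point of $\Gamma$ is reversible about that instant, i.e. $\gamma(t^*+s)=\Gamma(\gamma(t^*-s))$; this both extends $\gamma$ to $[0,2t^*]$ and shows that $\gamma$ stays off the axis on $\,]0,2t^*[$ (for $s\in\,]0,t^*[$ the point $\gamma(t^*-s)$ lies on $\gamma(\,]0,t^*[\,)\subset\gamma(\,]0,T[\,)$, hence off the axis, and $\Gamma$ fixes the axis setwise) while $\gamma(2t^*)=\Gamma(\gamma(0))=(-x,0)$ lies on it. Comparing with the minimality of $T$ then forces $T=2t^*$, hence $\xi(x)=-x$.

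For the second part, substituting $\xi(x)=-x$ into \eqref{slowdiv} writes $I(x)=\int_{-x}^{x}g(u)\,du$ with $g(u)=\frac{(Y_+-Y_-)^2}{\textnormal{det}\,Z}(u,0,0)\,\phi'\!\big(\phi^{-1}\!\big(\tfrac{-Y_-}{Y_+-Y_-}(u,0,0)\big)\big)$, and the task is to check that $g$ extends smoothly across $u=0$. Here I would invoke \eqref{finaltwofoldcond}, \propref{VI3}(iv) and \eqref{detZp2}: the functions $Y_\pm(\cdot,0,0)$, $(Y_+-Y_-)(\cdot,0,0)$ and $\textnormal{det}\,Z(\cdot,0,0)$ all vanish to exactly first order at $u=0$ (the latter two with positive derivative), so by Hadamard's lemma $\frac{(Y_+-Y_-)^2}{\textnormal{det}\,Z}(u,0,0)=u\,\psi(u)$ with $\psi$ smooth near $0$ and $\psi(0)>0$, while $\frac{-Y_-}{Y_+-Y_-}(u,0,0)$ is smooth near $0$ with value $\frac{-Y_-'}{Y_+'-Y_-'}(0,0,0)\in\,]0,1[$; since $\phi$ is a smooth diffeomorphism of $\mathbb R$ onto $\,]0,1[$ with $\phi'>0$ (by \ref{assA} and \ref{assB}), $s\mapsto\phi'(\phi^{-1}(s))$ is smooth there, so $g(u)=u\,w(u)$ with $w$ smooth near $0$. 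Then $\widetilde I(x):=\int_{-x}^{x}u\,w(u)\,du$ is a smooth extension of $I$ to a neighbourhood of $0$, and $\widetilde I(-x)=-\widetilde I(x)$ (substitute $u\mapsto -u$, or note $\widetilde I(0)=0$ and $\widetilde I'(x)=g(x)+g(-x)$ is even), which gives the asserted smooth odd extension.

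The main obstacle is confined to the first part, and within it to ruling out a return of $\gamma$ to the $x$-axis strictly before it reaches $(-x,0)$ — this is exactly what the reversibility of $\gamma$ about the crossing time $t^*$ of $\mathrm{Fix}(\Gamma)$ provides, together with the minimality of $T$. The second part is routine: the apparent singularity of $g$ at $u=0$ is removable purely because of the first-order vanishing encoded in the $VI_3$ configuration, and oddness of $x\mapsto\int_{-x}^{x}(\cdot)$ is automatic.
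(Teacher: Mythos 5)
Your proof is correct and follows essentially the same route as the paper: the reversibility of $Z_-$ under $(x,t)\mapsto(-x,-t)$ makes the orbit mirror-symmetric about $\{x=0\}$, so its first return to $\Sigma$ from $(x,0)$ is at $(-x,0)$, i.e.\ $\xi(x)=-x$ (the paper phrases this via $\xi_+(y)=-\xi_-(y)$ for the forward/backward first intersections from $(0,y)$, you via the conjugation $\Gamma\circ\Phi_t=\Phi_{-t}\circ\Gamma$ and the crossing time $t^*$ — the same argument, written out in more detail). Your second part, the removable singularity of the integrand at $u=0$ via first-order vanishing of $Y_\pm$, $Y_+-Y_-$ and $\det Z$ and the resulting smooth odd extension of $\int_{-x}^{x}$, is exactly the L'Hospital/Hadamard reasoning the paper uses elsewhere (e.g.\ in the proof of Lemma \ref{lemma:I2k1}) and leaves implicit here.
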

\begin{proof}
From the symmetry, we have that if $(\xi_-(y),0)$ is the first intersection with $\Sigma$ by the forward flow of $(0,y)$ then $(\xi_+(y),0)$ with $\xi_+(y)=-\xi_-(y)$ is the first intersection with $\Sigma$ by the backward flow. 
\end{proof}
In the following, while we continue to use $()'$ to denote the partial derivative with respect to $x$ evaluated at $(x,y,\lambda)=(0,0,0)$, we will also use $()''$ to indicate the second order partial derivative with respect to $x$ also evaluated at $(x,y,\lambda)=(0,0,0)$.

We then proceed to Taylor expand $I(x)$ around $x=0$. Let $y_{2c} = \phi^{-1}\left(\frac{-Y_-'}{Y_+'-Y_-'}\right)$ and recall that $$ \text{det}\,Z(x,0,0) = (X_-Y_+-X_+Y_-)(x,0,0).$$  

Since $Z_-$ is assumed to be $\Gamma$-symmetric, see \ref{assE}, we have that $x\mapsto X_-(x,0,0)$ is even whereas $x\mapsto Y_-(x,0,0)$ is odd. Consequently, $X_-'=Y_-''=0$ and 
\begin{align*}
 \text{det}\,Z'' = X_-Y_+''-2X_+'Y_-'.
\end{align*}
Then from \cite[Eq. 4.13]{bonet-reves2018a} we have that
\begin{align*}
 I(x) &= \frac23 x^3 \bigg(\frac12 (Y_+'-Y_-') \left(\frac{Y_+''}{\text{det}\,Z'}-\frac{(Y_+'-Y_-')\text{det}\,Z''}{2(\text{det}\,Z')^2}\right)\phi'(y_{2c})+\frac{\phi''(y_{2c})}{\phi'(y_{2c})}\frac{Y_+''Y_-'}{2\text{det}\,Z'}\bigg)+\mathcal O(x^5),
\end{align*}
under the assumption \ref{assE}, recall also \ref{assD} and \eqref{detZp2}.

The regularization function satisfies assumptions \ref{assA}-\ref{assC}. In particular, it is invertible and $\phi'>0$, but \ref{assA}-\ref{assC} do not impose further restrictions on the higher order partial derivatives $\phi$ at any point. 
Suppose:

\begin{enumerate}[resume*]
    \item \label{assF} $Y_+''\ne 0$. 
\end{enumerate}

It then follows (see also \ref{assD} and \eqref{detZp2}) that $I^{(3)}(0)$ can have either sign, depending on $\phi''(y_{2c})$. In fact, seeing that $I^{(3)}(0)$ depends upon $\phi''(y_{2c})$ in an affine way -- with a coefficient of $\phi''(y_{2c})$ that is nonzero -- there is a unique value of $\phi''(y_{2c})$ (for every $\phi'(y_{2c})>0$) for which $I^{(3)}(0)=0$. The following lemma allow us to generalize this result to any odd derivative of $I$ at $x=0$.
\begin{lemma}\lemmalab{I2k1}
 $I^{(2k+1)}(0)$ for $k\in \mathbb N$ depends upon $\phi'(y_{2c}),\ldots,\phi^{(2k)}(y_{2c})$ and takes the following form:
 \begin{align}
  I^{(2k+1)}(0) = J_{2k-1}(\phi'(y_{2c}),\ldots,\phi^{(2k-1)}(y_{2c}))+\frac{1}{\phi'(y_{2c})^{2k-1}} C_{2k} \phi^{(2k)}(y_{2c}),\eqlab{I2k1expr}
 \end{align}
 where \response{$J_{2k-1}:\mathbb R_+\times  \underbrace{\mathbb R\times \cdots \times \mathbb R}_{2k-2\, \textnormal{copies}}  \rightarrow \mathbb R$ is a smooth function and} where
 \begin{align}\eqlab{C2k}
  C_{2k}= \frac{4k(Y_+'-Y_-')^2}{\textnormal{det}Z'} \left(\frac{Y_+''Y_-'}{2(Y_+'-Y_-')^2}\right)^{2k-1}.
 \end{align}
 In particular, $C_{2k}\ne 0$ whenever assumption \ref{assD} and \ref{assF} hold. 
\end{lemma}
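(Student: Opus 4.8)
The plan is to reduce the statement to a single Taylor expansion at $x=0$ of the integrand of $I$. Set
$g(u):=\frac{(Y_+-Y_-)^2}{\textnormal{det}\,Z}(u,0,0)$, $\rho(u):=\frac{-Y_-}{Y_+-Y_-}(u,0,0)$ and $\psi:=\phi'\circ\phi^{-1}$, so that the integrand in \eqref{slowdiv} (at $\lambda_0=0$) is $G(u):=g(u)\,\psi(\rho(u))$. Since the preceding lemma (valid under \ref{assE}) gives $\xi(x)=-x$, we have $I(x)=\int_{-x}^{x}G(u)\,du$, whence $I^{(n)}(x)=G^{(n-1)}(x)+(-1)^{n-1}G^{(n-1)}(-x)$ and in particular $I^{(2k+1)}(0)=2\,G^{(2k)}(0)$. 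Thus everything comes down to computing $G^{(2k)}(0)$ and tracking which derivatives of $\phi$ at $y_{2c}$ can occur; note that $G$ is smooth near $0$ precisely because $g$ and $\rho$ extend smoothly, as recorded next.

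First I would record three structural facts. (i) Because $Y_+(0,0,0)=Y_-(0,0,0)=0$ while $\textnormal{det}\,Z'\ne 0$ (by \eqref{detZp2}) and $Y_+'\ne Y_-'$ (by \eqref{detZp}, \propref{VI3}(iv)), the quotient $g=(Y_+-Y_-)/X_{sl}$ extends smoothly across $u=0$; moreover $X_{sl}(0,0,0)>0$ by \ref{assD}, so $g(0)=0$ and a short computation gives $g'(0)=\frac{(Y_+'-Y_-')^2}{\textnormal{det}\,Z'}$. (ii) Likewise $\rho$ extends smoothly with $\rho(0)=\frac{-Y_-'}{Y_+'-Y_-'}$, hence $\phi^{-1}(\rho(0))=y_{2c}$; writing $Y_\pm(u,0,0)=u\,\widetilde Y_\pm(u)$ and using that \ref{assE} makes $u\mapsto Y_-(u,0,0)$ odd (so $\widetilde Y_-$ is even) one obtains $\rho'(0)=\frac{Y_+''Y_-'}{2(Y_+'-Y_-')^2}$. (iii) Differentiating $\psi(s)=\phi'(w)$, $w=\phi^{-1}(s)$, through $dw/ds=1/\phi'(w)$, an induction on $j$ yields, for $j\ge 1$,
\[
\psi^{(j)}(s)=\frac{\phi^{(j+1)}(w)}{\phi'(w)^{j}}+R_j\!\left(\phi'(w),\dots,\phi^{(j)}(w)\right),
\]
where $R_j$ is rational with a power of $\phi'(w)$ as denominator.

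Then I would expand by the Leibniz rule and Faà di Bruno:
\[
G^{(2k)}(0)=\sum_{j=0}^{2k}\binom{2k}{j}g^{(j)}(0)\Big[\tfrac{d^{2k-j}}{du^{2k-j}}\,\psi(\rho(u))\Big]_{u=0},
\]
each bracket being a universal polynomial in $\psi'(\rho(0)),\dots,\psi^{(2k-j)}(\rho(0))$ and $\rho'(0),\rho''(0),\dots$. The only term containing $\psi^{(2k)}$ — the sole object carrying $\phi^{(2k+1)}$ — is $j=0$, and it vanishes because $g(0)=0$. Hence the highest surviving $\psi$-derivative is $\psi^{(2k-1)}$, occurring only in the $j=1$ term through its leading Faà di Bruno monomial $2k\,g'(0)\,(\rho'(0))^{2k-1}\,\psi^{(2k-1)}(\rho(0))$; by (iii) its $\phi^{(2k)}$-part is exactly $2k\,g'(0)\,(\rho'(0))^{2k-1}\phi^{(2k)}(y_{2c})/\phi'(y_{2c})^{2k-1}$. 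Every remaining contribution — the remainder $R_{2k-1}$, all terms with $\psi^{(m)}$ for $m\le 2k-2$, and all $j\ge 2$ terms — involves only $\phi'(y_{2c}),\dots,\phi^{(2k-1)}(y_{2c})$ together with the fixed numbers $X_\pm,Y_\pm$ and their $x$-derivatives at the origin, and enters rationally with a power of $\phi'(y_{2c})$ as denominator. Collecting these into $J_{2k-1}$, smooth on $\mathbb{R}_+\times\mathbb{R}^{2k-2}$ since $\phi'(y_{2c})>0$, and multiplying by $2$ produces \eqref{I2k1expr} with $C_{2k}=4k\,g'(0)\,(\rho'(0))^{2k-1}$, which by (i)–(ii) is precisely \eqref{C2k}. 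Finally $C_{2k}\ne 0$, since $g'(0)\ne 0$ and $\rho'(0)\ne 0$ (the latter using $Y_+''\ne 0$ from \ref{assF} and $Y_-'<0$ from \eqref{finaltwofoldcond}).

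I expect the bookkeeping in the last step to be the only real obstacle: one must be certain that no $\phi^{(2k+1)}$ slips through and that the coefficient of $\phi^{(2k)}(y_{2c})$ is captured exactly. The former is precisely the role of the vanishing factor $g(0)=0$ (the integrand of $I$ carries the extra zero $(Y_+-Y_-)$ at the two-fold), and the latter reduces to isolating one Faà di Bruno monomial; both become routine once (i)–(iii) are in place. A consistency check against the displayed $k=1$ case, where the coefficient of $\phi''(y_{2c})$ reads $\frac{2Y_+''Y_-'}{\textnormal{det}\,Z'\,\phi'(y_{2c})}=\frac{C_2}{\phi'(y_{2c})}$, fixes the normalization.
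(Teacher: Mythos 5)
Your proposal is correct and follows essentially the same route as the paper's proof: write $I(x)=\int_{-x}^{x}$ of the smoothly extended integrand, differentiate via Leibniz and Fa\'a di Bruno, use the vanishing of the factor $\frac{(Y_+-Y_-)^2}{\textnormal{det}\,Z}$ at $x=0$ to rule out any $\phi^{(2k+1)}$-contribution, and read off the coefficient of $\phi^{(2k)}(y_{2c})$ from the top Bell monomial, which reproduces \eqref{C2k}. The only difference is cosmetic bookkeeping (you factor the integrand through $\psi=\phi'\circ\phi^{-1}$ and $\rho$, while the paper composes $\phi'$ with $g=\phi^{-1}\circ\rho$ and invokes inverse differentiation), and your values $g'(0)$, $\rho'(0)$ and the $k=1$ check agree with the paper.
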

\begin{proof}
 For simplicity write
 \begin{align*}
  g(x) = \phi^{-1}\left(\frac{-Y_-}{Y_+-Y_-}(x,0,0)\right),\quad h(x) = \frac{(Y_+-Y_-)^2}{\text{det}\,Z}(x,0,0).
 \end{align*}
 Then the integrand of $I(x)$ is $$i(u):=h(u)\phi'(g(u)).$$ Notice that $g$ and $h$ both have ``$0/0$'' at $x=0$, but each has a smooth extension to $x=0$ due to the assumption of the two-fold by L'Hospital, recall also \propref{VI3}. In particular,
 \begin{align*}
  g(0) = y_{2c}:=\phi^{-1}\left(\frac{-Y_-'}{Y_+'-Y_-'}\right).
 \end{align*}
Moreover,
\begin{align*}
 g'(0) = \frac{1}{\phi'(y_{2c})}\frac{Y_+''Y_-'}{2(Y_+'-Y_-')^2},
\end{align*}
using assumption \ref{assF}. 

In the same way, $h(0)=0$ and
\begin{align*}
 h'(0)=\frac{(Y_+'-Y_-')^2}{\text{det}\,Z'}.
\end{align*}
We compute the partial derivatives of $i(x)$ of even degree using the Fa\'a di Bruno rule:
\begin{align}
 i^{(2k)}(0) = \sum_{m=0}^{2k-1} \begin{pmatrix}
                            2k\\m
                           \end{pmatrix} h^{(2k-m)}(0) \sum_{n=1}^m \phi^{(n+1)}(y_{2c}) B_{m,n}(g'(0),\ldots,g^{(m-n+1)}(0)),\eqlab{i2kexpr}
\end{align}
where $B_{m,n}$ are the Bell polynomials. Here we have used that $h(0)=0$. Each $g^{(l)}(0)$ can be written in terms of $\phi'(y_{2c}),\ldots,\phi^{(l)}(y_{2c})$ (as well as the partial derivatives of $Y_\pm$). This follows from the rule of inverse differentiation. 
\response{To show  the explicit expression for the coefficient of $\phi^{(2k)}(y_{2c})$, including the expression for $C_{2k}$, we consider the term in \eqref{i2kexpr} with $n=m=2k-1$}:
\begin{align*}
 \begin{pmatrix} 2k \\ 2k-1\end{pmatrix} h'(0) \phi^{(2k)}(y_{2c}) B_{2k-1,2k-1}(g'(0))=2kh'(0) \phi^{(2k)}(y_{2c}) g'(0)^{2k-1}
\end{align*}
using that $B_{n,n}(x)=x^n$. By the Leibniz integral rule, the result -- \response{including the stated properties of $J_{2k-1}$} -- then follows.
%
\end{proof}

We now have the following: If we assume \ref{assF} then for each $k\in \mathbb N$ there is a unique value:
\begin{align}\eqlab{Psi0}
    -C_{2k}^{-1} J_{2k-1}(\phi'(y_{2c}),\ldots,\phi^{(2k-1)}(y_{2c}))\phi'(y_{2c})^{2k-1},
\end{align}
of $\phi^{(2k)}(y_{2c})$ (for fixed values of the derivatives of lower order $\phi'(y_{2c}),\ldots,\phi^{(2k-1)}(y_{2c})$) such that $I^{(2k+1)}(0)=0$. 

We can then prove the following result. 
\begin{theorem}\thmlab{thmhere}
 Suppose that \ref{assF} holds. Then for each $k\in \mathbb N$ there is a regularization function $\phi_k$ satisfying \ref{assA}-\ref{assC} so that $I(x)$ has $k-1$ simple positive roots. 
\end{theorem}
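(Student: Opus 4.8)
The plan is to reduce the statement to prescribing finitely many Taylor coefficients of $I$ at $x=0$, to realise those coefficients through the $2k$-jet of $\phi$ at $y_{2c}$ via \lemmaref{I2k1}, and then to locate the roots of the resulting $I$ by a dominance estimate; the subtlety is that $I$ is smooth but not polynomial. First I would use that, under \ref{assE}, $\xi(x)=-x$ and, by the Taylor expansion of $I$ recalled above (from \cite[Eq.~4.13]{bonet-reves2018a}), $I$ extends to a smooth odd function near $x=0$ vanishing to order three; moreover $\frac{(Y_+-Y_-)^2}{\textnormal{det}\,Z}$ and $\frac{-Y_-}{Y_+-Y_-}$ extend smoothly across $x=0$ by \propref{VI3}, so $I$ and the curve $g$ are well-defined and smooth there for any admissible $\phi$. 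Hence $I(x)=x^{3}Q(x^{2})$ with $Q$ smooth near $0$, $Q^{(m)}(0)=\frac{m!}{(2m+3)!}I^{(2m+3)}(0)$, and $x_{*}\in\,]0,\mu_{+}[$ is a simple root of $I$ iff $x_{*}^{2}$ is a simple root of $Q$. I would also note that $[\mu_{-},\mu_{+}]$ in \ref{assD} may be replaced by any $[-\mu_{+}',\mu_{+}']$ with $0<\mu_{+}'\le\mu_{+}$, since all conditions there pass to subintervals once $\xi(x)=-x$. Thus it suffices to construct $\phi_{k}$ satisfying \ref{assA}--\ref{assC} for which $Q$ has exactly $k-1$ simple roots in $]0,\mu_{+}^{2}[$ for some admissible, possibly small, $\mu_{+}>0$ (for $k=1$ this just means $Q$ has no positive root).

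Fix $0<\hat a_{1}<\dots<\hat a_{k-1}$ and a small parameter $\delta>0$, and put $T_{\delta}(u):=\prod_{i=1}^{k-1}(u-\delta\hat a_{i})$ (so $T_{\delta}\equiv1$, no $\hat a_i$, when $k=1$). I would aim for $Q^{(m)}(0)=T_{\delta}^{(m)}(0)$, $m=0,\dots,k-1$, equivalently prescribe the values of $I^{(3)}(0),\dots,I^{(2k+1)}(0)$. By \lemmaref{I2k1}, taking the $2k$-jet of $\phi$ at $y_{2c}$ to be $\phi(y_{2c})=\theta:=\frac{-Y_{-}'}{Y_{+}'-Y_{-}'}$ (forced; $\theta\in\,]0,1[$ by \propref{VI3}), $\phi'(y_{2c})=1$, $\phi^{(3)}(y_{2c})=\dots=\phi^{(2k-1)}(y_{2c})=0$, and $\phi^{(2)}(y_{2c}),\phi^{(4)}(y_{2c}),\dots,\phi^{(2k)}(y_{2c})$ solved recursively, each step is one linear equation in the next even-order derivative whose leading coefficient $C_{2j}$ in \eqref{I2k1expr} is nonzero by \eqref{C2k} under \ref{assD} and \ref{assF}; this fixes a $2k$-jet depending continuously on $\delta$ and converging as $\delta\to0$.

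Next I would realise this jet by a genuine regularization function. Fix once and for all a reference $\phi_{*}$ obeying \ref{assA}--\ref{assC} with $\phi_{*}'(s)=e^{-s^{2}}$ for $|s|\ge1$, a cutoff $\chi$ equal to $1$ near $y_{2c}$ and supported in $]y_{2c}-\rho,y_{2c}+\rho[$, and on that window the positive function $\phi_{\mathrm{loc}}'(s)=\exp\bigl(\textstyle\sum_{m=1}^{2k-1}q_{m}(s-y_{2c})^{m}\bigr)$, whose derivatives at $y_{2c}$ are matched to the prescribed ones by choosing $q_{1},\dots,q_{2k-1}$ from a triangular system. Set $\phi_{k}':=\chi\,\phi_{\mathrm{loc}}'+(1-\chi)\alpha\,\phi_{*}'$ to the left of $y_{2c}$ and $:=\chi\,\phi_{\mathrm{loc}}'+(1-\chi)\beta\,\phi_{*}'$ to the right (with $\chi\phi_{\mathrm{loc}}'$ extended by $0$), $\phi_{k}:=\int_{-\infty}^{\,\cdot\,}\phi_{k}'$: then $\phi_{k}'>0$, the asymptotics \ref{assA} and the smoothness \ref{assC} at $\pm\infty$ are inherited from $\phi_{*}$, the constants $\alpha,\beta>0$ are fixed by $\int_{-\infty}^{y_{2c}}\phi_{k}'=\theta$ and $\int_{\mathbb{R}}\phi_{k}'=1$ (solvable with $\alpha,\beta>0$ for $\rho$ small, uniformly in small $\delta$), and $\phi_{k}$ carries the prescribed $2k$-jet at $y_{2c}$. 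As all data depend continuously on $\delta$ and converge as $\delta\to0$, the associated $I=I_{\delta}$, hence $Q=Q_{\delta}$, converges in $C^{\infty}$ near $0$; in particular there are a fixed $c>0$ and a fixed $M$ with $\sup_{[0,c]}|Q_{\delta}^{(k)}|\le M$ for all small $\delta$.

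Finally I would count roots. Write $Q_{\delta}=T_{\delta}+R_{\delta}$; since $Q_{\delta}^{(m)}(0)=T_{\delta}^{(m)}(0)$ for $m\le k-1$ and $\deg T_{\delta}\le k-1$, Taylor's theorem gives $|R_{\delta}(u)|\le\frac{M}{k!}u^{k}$ on $[0,c]$. On $[0,2\delta\hat a_{k-1}]$ the rescaling $u=\delta\hat u$ turns this into $\delta^{-(k-1)}Q_{\delta}(\delta\hat u)=\prod_{i}(\hat u-\hat a_{i})+O(\delta)$, uniformly in $C^{1}$ on $[0,2\hat a_{k-1}]$, a small perturbation of a polynomial with the $k-1$ simple roots $\hat a_{i}$; the intermediate value theorem at those $k-1$ sign changes, together with monotonicity of $Q_{\delta}$ on small neighbourhoods of the $\delta\hat a_{i}$ (where $|Q_{\delta}'|\gtrsim\delta^{k-2}$), gives exactly $k-1$ simple roots of $Q_{\delta}$ in $]0,2\delta\hat a_{k-1}[$ for $\delta$ small. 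Choosing $\mu_{+}^{2}:=\min\{c,\,k!/(2^{k-1}M)\}$, on $[2\delta\hat a_{k-1},\mu_{+}^{2}]$ every factor satisfies $u-\delta\hat a_{i}\ge u/2$, so $T_{\delta}(u)\ge(u/2)^{k-1}>\frac{M}{k!}u^{k}\ge|R_{\delta}(u)|$, whence $Q_{\delta}>0$ there; and $2\delta\hat a_{k-1}<\mu_{+}^{2}$ for $\delta$ small. Hence $Q_{\delta}$ has exactly $k-1$ simple roots in $]0,\mu_{+}^{2}[$, so $I$ has exactly $k-1$ simple roots in $]0,\mu_{+}[$, and $\phi_{k}$ with this $\mu_{+}$ proves the theorem. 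The main obstacle is precisely this last point: prescribing a $(k-1)$-jet does not by itself force $k-1$ roots of the non-polynomial $I$, so one must squeeze the prescribed roots into a $\delta$-window, where the controlled remainder $R_{\delta}=O(\delta^{k})$ is dominated by the polynomial part $T_{\delta}=O(\delta^{k-1})$, while a crude size bound (plus the freedom to shrink $\mu_{+}$) excludes spurious roots farther out.
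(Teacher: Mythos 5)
Your proposal is correct and follows the same overall strategy as the paper: use \lemmaref{I2k1} to solve recursively for the even derivatives of the regularization function at $y_{2c}$ (odd ones set to zero) so that the first $k$ odd Taylor coefficients of $I$ at $x=0$ reproduce a model polynomial whose $k-1$ simple roots are compressed into a $\delta$-window, realize this $2k$-jet by an honest regularization function agreeing with a fixed reference away from $y_{2c}$, and then perturb off the polynomial for $\delta$ small. Where you differ is in two sub-steps. First, the paper glues the polynomial $\psi_k$ into the reference $\phi$ additively with a bump function \eqref{barphi} and then verifies \ref{assB} by the $\mathcal O(\upsilon)$ estimate on \eqref{barphip}; you instead prescribe $\phi_k'$ directly as a manifestly positive function (an exponential of a polynomial cut off against Gaussian-tailed reference derivatives, with normalization constants enforcing $\phi_k(y_{2c})=\theta$ and the limits $0,1$), which makes \ref{assB} automatic at the cost of a small solvability check for $\alpha,\beta$. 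Second, the paper locates the roots by the rescaling $x=\delta x_2$, the identity $I(\delta x_2)=\delta^{2k+1}x_2^3(P_{k-1}(x_2^2)+\mathcal O(\delta))$ and the implicit function theorem, whereas you work with $Q(u)=I(x)/x^3$, $u=x^2$, and use an explicit Taylor-remainder dominance bound; this buys a bit more, namely that the $k-1$ roots are the \emph{only} positive roots up to a (possibly shrunken) $\mu_+$, which is exactly the "exactly $k-1$ simple zeros" hypothesis of \thmref{mainslowdiv} that the paper's proof leaves implicit. The only places where your argument is thinner than it should be are the uniformity claims (a fixed neighborhood $[0,c]$ and a $\delta$-uniform bound on $Q_\delta^{(k)}$), which do hold here because $\phi_{k,\delta}$ varies in a finite-dimensional family whose data converge as $\delta\to 0$ with the window $\rho$ fixed, but deserve a sentence of justification analogous to the paper's remark that its positivity estimate holds uniformly for small $\delta$.
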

\begin{proof}
 For each $k\in \mathbb N$, we first put $\Phi_1^{(2k)}=1$ and define the numbers $\Phi_1^{(2i)}$, $i=1,\ldots,k-1$ so that the $k-1$ degree polynomial
 \begin{align}\eqlab{Pk_1}
     P_{k-1}(x_2) = \Phi_1^{(2)}+\cdots + \Phi_1^{(2(k-1))}x_2^{k-2}+x_2^{k-1},
 \end{align}
 has $k-1$ simple roots at the first $k-1$ integers:
 \begin{align}\eqlab{Pkroots}
 P_{k-1}(1)=\cdots =P_{k-1}(k-1)=0.
\end{align}
 
 Then fix $\phi$ as any regularization function. 
  Given $\phi'(y_{2c})>0$ and $y_{2c}=\phi^{-1}\left(\frac{-Y_-'}{Y_+'-Y_-'}\right) \in \mathbb R$, as well as $\Phi_1^{(2i)}$, $i=1,\ldots,k$, defined above, we proceed to define for each $\delta>0$ the function $\psi_k:\mathbb R \rightarrow \mathbb R$ as the polynomial of degree (at most) $2k$ with 
  \begin{align*}
 \psi_k(y_{2c})=\frac{-Y_-'}{Y_+'-Y_-'}, \quad \psi_k'(y_{2c})=\phi'(y_{2c}),\quad \psi_k^{(2i+1)}(y_{2c})=0\mbox{ for all } i=1,\ldots,k-1,
 \end{align*} 
 and where
 \begin{align}\eqlab{psik2i}
  \psi_k^{(2i)}(y_{2c}) = \Psi_0^{(2i)}+ (2i+1)! \delta^{2(k-i)} {\phi'(y_{2c})^{2i-1}} C_{2i}^{-1}\Phi_1^{(2i)},
\end{align}
for $i=1,\ldots,k$.  Here $\Psi_0^{(2i)}$, $i=1,\ldots,k$ are defined  recursively as the values of $\phi^{(2i)}(y_{2c})$ such that $I^{(2i+1)}(0)=0$:
\begin{align*}
  \Psi_0^{(2i)} = -C_{2i}^{-1} J_{2i-1}(\psi_k'(y_{2c}),\ldots,\psi_k^{(2i-1)}(y_{2c}))\phi'(y_{2c})^{2i-1},
  \end{align*}
recall \eqref{C2k} and \eqref{Psi0}. Then for each $\delta>0$ these $2k+1$ conditions fix the polynomial $\psi_k$ uniquely and
from \lemmaref{I2k1} it follows that  \begin{align}
I^{(2i+1)}(0) = (2i+1)! \delta^{2(k-i)}\Psi_1^{(2i)}.\eqlab{I2i1here}
 \end{align}
if we replace $\phi$ by $\psi_k$ in the expression for $I(x)$.
 
\response{$\psi_k$ is, however, not a regularization function. Instead, we construct the regularization function $\phi_k$ by modifying $\phi$ such that it agrees with $\psi_k$ on a small neighborhood of $y_{2c}$ and, in particular, has the prescribed derivatives $\psi_k^{(i)}(y_{2c})$, $i=0,1,\ldots,2k$ at $y_2=y_{2c}$.} For this purpose, let $B:\mathbb R\rightarrow \mathbb R$ be a smooth ``bump function'' with support on $]-2,2[$ that is $1$ on the domain $[-1,1]$. Let $\upsilon>0$ and define $B_\upsilon(x) = B(\upsilon^{-1} x)$. Then $B_\upsilon$ is a bump function with support on $]-2\upsilon,2\upsilon[$ that is $1$ on the domain $[-\upsilon,\upsilon]$. Clearly, 
$$
\vert B_\upsilon'(x)\vert \le \upsilon^{-1}\text{sup}\vert B'\vert,$$ for all $x\in \mathbb R$. 
 We then define $\phi_k$  as follows:
 \begin{align}
  \phi_k(y_2):=\phi(y_2)(1-B_\upsilon(y_2-y_{2c}))+\psi_k(y_2) B_\upsilon(y_2-y_{2c}).\eqlab{barphi}
 \end{align}
Notice that $\phi_k(y_2) = \psi_k(y_2)$ on $[y_{2c}-\upsilon,y_{2c}+\upsilon]$ whereas $\phi_k(y_2) = \phi(y_2)$ outside $]y_{2c}-2\upsilon,y_{2c}+2\upsilon[$. It is clear that $\phi_k$ satisfies \ref{assA} and \ref{assC}. Therefore to verify that $\phi_k$ is a regularization function we just need to show \ref{assB}. By taking $\upsilon>0$ small enough, we have that 
  \begin{align}\eqlab{psicond}
  \psi_k'(y_2)>0 \mbox{ for } y_2\in [y_{2c}-\upsilon,y_{2c}+\upsilon].
  \end{align} 
  and consequently, it suffices to verify \ref{assB} on  $[y_{2c}-2\upsilon,y_{2c}-\upsilon]\cup [y_{2c}+\upsilon,y_{2c}+2\upsilon]$. We have
 \begin{equation}\eqlab{barphip}
 \begin{aligned}
  \phi_k'(y_2) &= \phi'(y_2) (1-B_\upsilon(y_2-y_{2c}))+\psi_k'(y_2)B_\upsilon(y_2-y_{2c})\\
  &+ (\psi_k(y_2) -\phi(y_2))B_\upsilon'(y_2-y_{2c}).
 \end{aligned}
 \end{equation}
 Seeing that $\phi'(y_{2c})=\psi_k'(y_{2c})$, the first two terms can be bounded by Taylor's theorem from below by $\frac12 \phi'(y_{2c})$ on the relevant domain by taking $\upsilon$ small enough. Similarly, using also that $\phi(y_{2c})=\psi_k(y_{2c})$ we have that 
\begin{align*}
 \psi_k(y_2) -\phi(y_2) = \frac12 D(y_2)(y_{2}-y_{2c})^2,
\end{align*}
for some smooth $D$. Hence on $[y_{2c}-2\upsilon,y_{2c}+2\upsilon]$ we have for all $\upsilon>0$ that
\begin{align*}
 \vert \psi_k(y_2) -\phi(y_2) \vert \le C \upsilon^2,
 \end{align*}
 for some constant $C>0$ independent of $\upsilon>0$. This allow us to bound the final term in \eqref{barphip} from below by $-C\text{sup}\vert B'\vert \upsilon$ for all $\upsilon>0$ small enough and consequently we have specifically shown that 
 \begin{align*}
\phi_k'(y_2)>0,
\end{align*}
for all $\upsilon>0$ small enough. Notice this holds uniformly for $\delta>0$ small enough.

We now apply \lemmaref{I2k1} with the regularization function $\phi_k$. Using \eqref{psik2i}, Taylor's theorem and setting $x=\delta x_2$ for $\delta>0$, we obtain the following expression
\begin{align*}
 I(x) = \delta^{2k+1} x_2^3 \left( P_{k-1}(x_2^2)+ \mathcal O(\delta)\right), 
\end{align*}
for the slow divergence-integral,
where $P_{k-1}$ is precisely the polynomial \eqref{Pk_1} of degree $k-1$. This is a simple calculation based upon \eqref{I2k1expr}, see also \eqref{I2i1here}. 
On $[y_{2c}-\upsilon,y_{2c}+\upsilon]$ where $\phi_k=\psi_k$ we then consider 
\begin{align}\eqlab{I2}
I_2(x_2,\delta)=\delta^{-2k-1} x_2^{-3} I(\delta x_2).
\end{align}By construction, recall \eqref{Pkroots}, we have \response{$I_2(x_2,0)=P_1(x_2)=0$} for each $x_2=1,\ldots,\sqrt{k-1}$,  and each root perturbs to a simple root of $I_2(\cdot,\delta)$ (and consequently a positive root of $I$ at $x\approx \delta,\ldots,\delta \sqrt{k-1}$) by the implicit function theorem for $\delta>0$ small enough. This completes the proof. 
\end{proof}

Suppose that 
\[
\beta>\chi>0,\quad \xi\ne 0.
\]
Then it is a simple calculation to show that assumptions \ref{assD}, \ref{assE} and \ref{assF} all hold true for $Z_\pm$ with $Z_+$ being quadratic of the form
\begin{align}\eqlab{Zp1}
    Z_+(z,\lambda) &= \begin{pmatrix}
                    1\\
                    x+\frac12 \xi x^2
                   \end{pmatrix},
\end{align}
whereas $Z_-$ is linear of the form
\begin{align}\eqlab{Zn1}
Z_-(z,\lambda) &=\begin{pmatrix}
                   -\chi  \\
                   -\beta (x-\lambda)
                   \end{pmatrix}.
\end{align}
Upon invoking \thmref{mainslowdiv}, we then conclude that for each $k\in\mathbb N$ \thmref{thmhere} gives the existence of $k$ limit cycles of $z' = Z(z,\phi_k(y\epsilon^{-2}),\lambda^k_c(\epsilon))$ for all $0<\epsilon\ll 1$. In turn, this then completes the proof of \thmref{mainthm}.
\section{Numerical examples}\seclab{num}
To illustrate and quantify some of the aspects of our general approach, we consider \eqref{Zp1} and \eqref{Zn1} with
\begin{align}\eqlab{para1}
\beta=2,\,\chi=\xi=1,
\end{align} and use the general procedure in the proof of \thmref{mainthm} in \secref{final} to find three different $ \phi_k$-functions (tuning the parameters $\delta$ and $\upsilon$) so that $I$ has $3$, $5$ and $7$ simple zeros. We define our bump function $B$ in the following classical way. Let 
\begin{align*}
    B_0(x) = \begin{cases}
    0 & \text{for}\, x\le 0,\\
    e^{-1/x} &\text{for}\, x> 0,
    \end{cases}
\end{align*}
and put $B_1(x)=\frac{B_0(x)}{B_0(x)+B_0(1-x)}$, $B_2(x)=B_1(x-1)$, $B_3(x)=B_2(x^2)$, and then finally $B(x):=1-B_3(x)$.

For simplicity we use \begin{align*}
    \phi(y_2) = \frac12 + \frac{1}{\pi}\arctan(y_2),
\end{align*}
as our reference regularization function. Then with the parameters \eqref{para1} we find that
\begin{align*}
    y_{2c} =  \phi^{-1}\left(\frac{2}{3} \right) = \frac{1}{\sqrt{3}},\quad  \phi'(y_{2c}) =\frac{3}{4\pi}.
\end{align*}
For each $k=4$, $k=6$ and $k=8$ we then fix the polynomial $\psi_k$ by $\psi_k(y_{2c})=\frac23$, $\psi_k'(y_{2c})=\frac{3}{4\pi}$ and by setting $\psi_k^{(2i)}$, $i=1,\ldots,k$ equal to the expressions in \eqref{psik2i}; the quantities $\Phi_1^{(2i)}$, $i=1,\ldots,k$ in \eqref{psik2i} are chosen so that $P_{k-1}$ \eqref{Pk_1} has its roots at $1,\ldots,k-1$. As outlined above we set all the odd higher order derivatives $\psi_k^{(2i+1)}(y_{2c})=0$, $i\ge 1$.  For $k=4$ with $\delta=10^{-3}$ we obtain
 \begin{equation}\begin{aligned}
    \psi_4(y_2) &=\frac{2}{3}+ \frac{3}{4\pi}\left(y_2 - \frac{1}{\sqrt{3}}\right)+0.2137243716\left(y_2- \frac{1}{\sqrt{3}}\right)^2+0.306956879\left(y_2- \frac{1}{\sqrt{3}}\right)^4\\
    &+1.442372260\left(y_2- \frac{1}{\sqrt{3}}\right)^6- 25.33517649\left(y_2- \frac{1}{\sqrt{3}}\right)^8.
\end{aligned}\eqlab{phik4}
\end{equation}
The resulting $ \phi_4$ \eqref{barphi}$_{k=4}$ is shown in \figref{Kphi}(a) for $\upsilon=0.05$ (in red) together with $ \phi$ (blue) and $\psi_4$ (green). For this $\phi_4$ we then proceed to accurately compute the slow divergence integral (using Taylor expansions up to terms of order $x^{25}$ computed in MAPLE with Digits set to $100$). The result is shown in \figref{KI}(a). Here $\upsilon>0$ is fixed so that $\phi_4$ satisfies \ref{assA}-\ref{assC}, whereas the value of $\delta$ is taken small enough to ensure that $I$ has (at least) $3$ simple zeros. We use the same method for $k=6$ and $k=8$ and find
\begin{equation}
\begin{aligned}
    \psi_6(y_2)& =\frac{2}{3}+ \frac{3}{4\pi}\left(y_2 - \frac{1}{\sqrt{3}}\right) +0.2137243716\left(y_2- \frac{1}{\sqrt{3}}\right)^2-0.3069568794\left(y_2- \frac{1}{\sqrt{3}}\right)^4\\
    &+1.44235445\left(y_2- \frac{1}{\sqrt{3}}\right)^6- 12.12351865\left(y_2- \frac{1}{\sqrt{3}}\right)^8\\
    &+154.2008391\left(y_2- \frac{1}{\sqrt{3}}\right)^{10}- 3015.15236\left(y_2- \frac{1}{\sqrt{3}}\right)^{12},
\end{aligned}\eqlab{phik6}
\end{equation}
and
 \begin{equation}\begin{aligned}
    \psi_8(y_2)& =\frac{2}{3}+ \frac{3}{4\pi}\left(y_2 - \frac{1}{\sqrt{3}}\right) +0.2137243716\left(y_2- \frac{1}{\sqrt{3}}\right)^2- 0.3069568794\left(y_2- \frac{1}{\sqrt{3}}\right)^4\\
    &+1.442354453\left(y_2- \frac{1}{\sqrt{3}}\right)^6- 12.12351865\left(y_2- \frac{1}{\sqrt{3}}\right)^8\\
    &+154.2008302\left(y_2- \frac{1}{\sqrt{3}}\right)^{10}- 2744.019283\left(y_2- \frac{1}{\sqrt{3}}\right)^{12}+ 65135.03549\left(y_2- \frac{1}{\sqrt{3}}\right)^{14}\\
    &- 1.998886089 \times 10^6\left(y_2- \frac{1}{\sqrt{3}}\right)^{16}
\end{aligned}\eqlab{phik8}
\end{equation}
for
$\delta=10^{-4}$ resp. $\delta=10^{-5}$. The result is shown in \figref{KI}(b) resp. (c), still with $\upsilon=0.05$. 

The roots are very sensitive with respect to $\delta$; increasing $\delta$ only slightly in each of our cases $k=4,6$, and $8$ lead to fewer roots. For example for $k=8$ we only find $5$ roots for $\delta\gtrsim 9.449 \times 10^{-5}$. In any case, $\delta>0$ has to be taken quite small to realize the desired number of roots. In turn, this implies that $I$ is extremely small on the relevant domains; for $k=8$ for example, we find (see \figref{KI}(c)) that $I(x)\sim 10^{-82}-10^{-85}$! We therefore expect -- in line with \cite{de2013a} -- that the desired number of limit cycles for $0<\epsilon\ll 1$ can also only be realized for extremely small values of $\epsilon>0$ and that these are therefore extremely difficult (if not impossible) to detect in numerical computations.

\begin{figure}[h!]
\begin{center}
\subfigure[]{\includegraphics[width=.45\textwidth]{./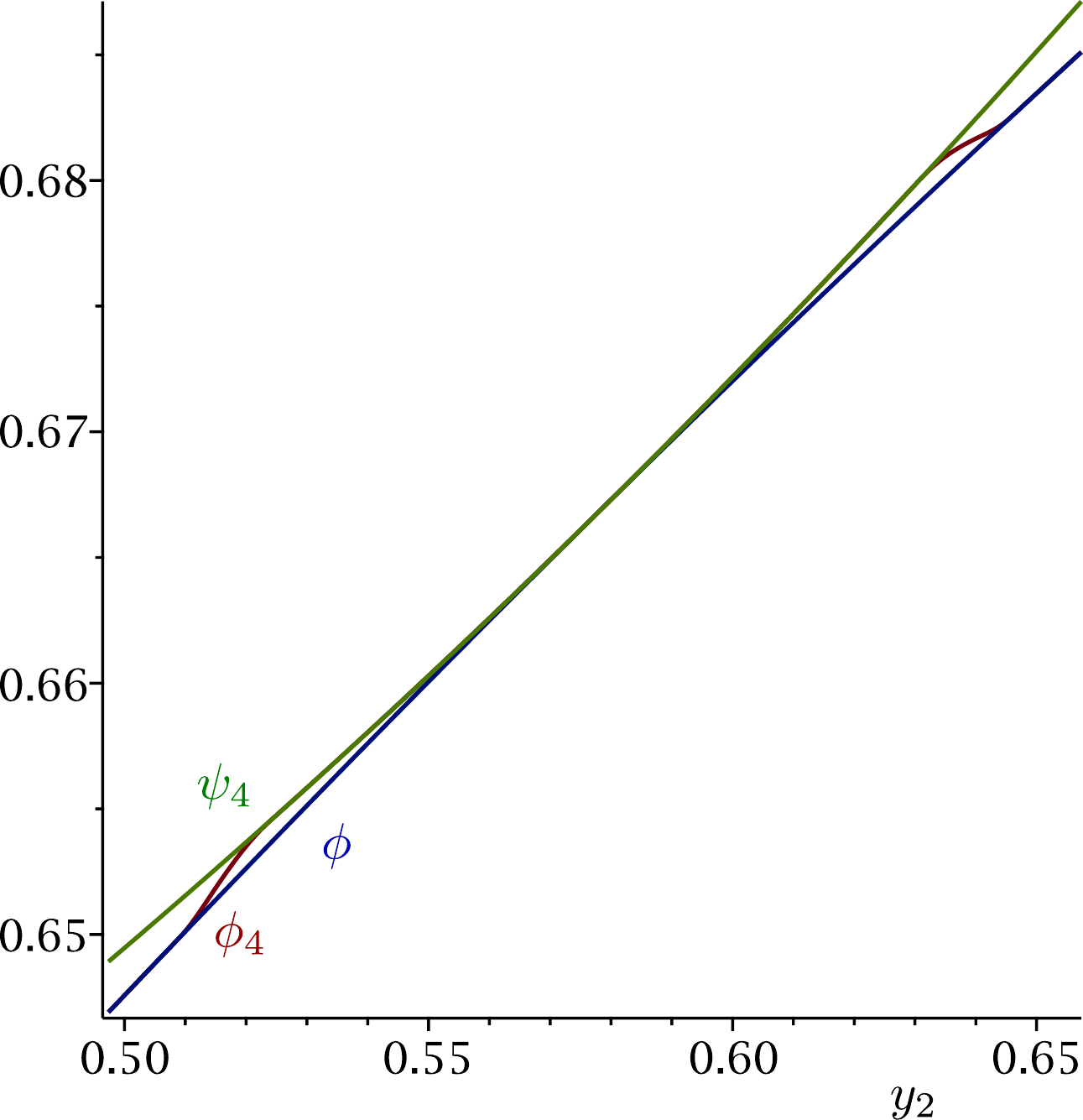}}
\subfigure[]{\includegraphics[width=.45\textwidth]{./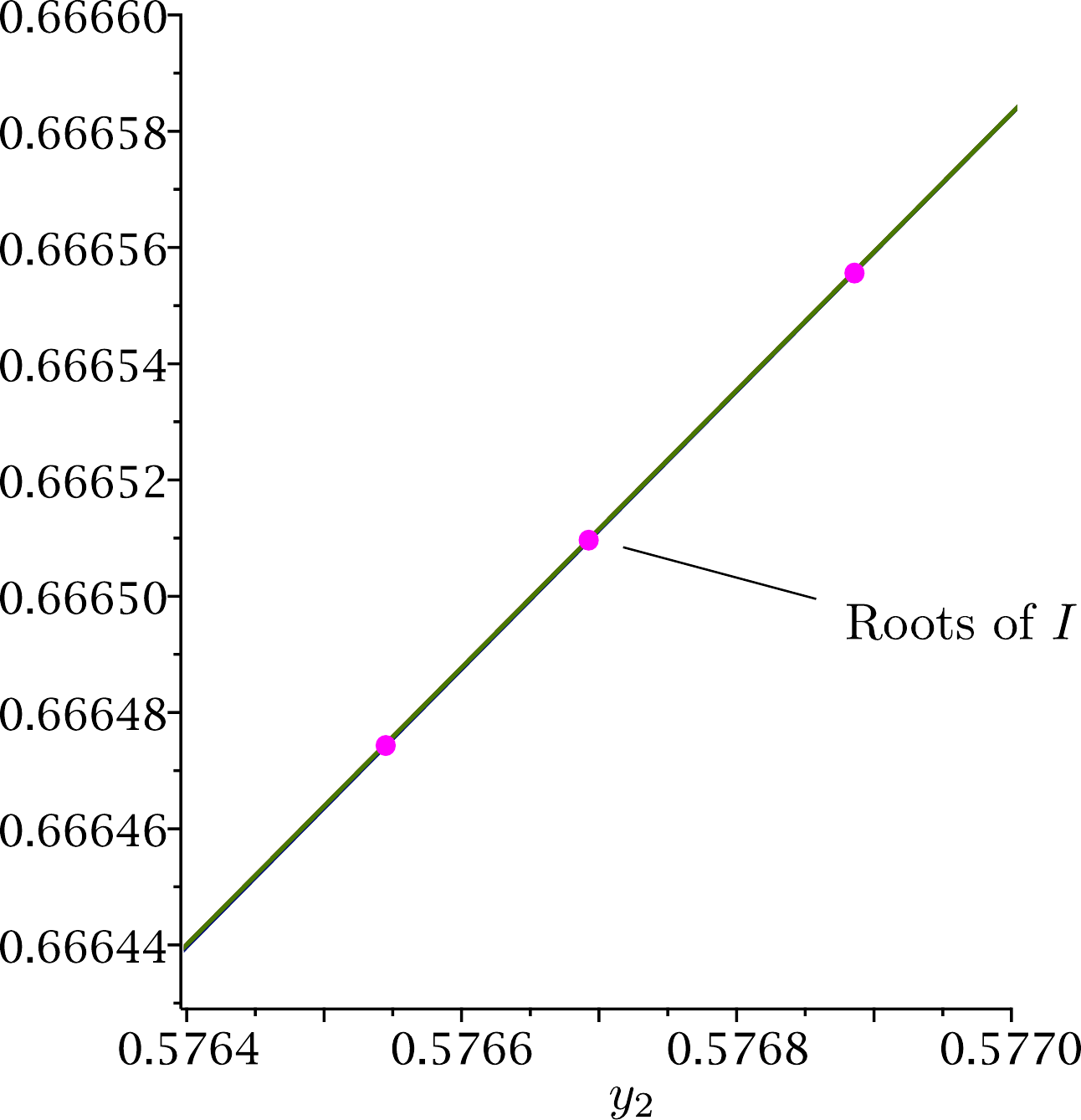}}
\end{center}
\caption{In (a): Graph of the regularization function $\phi_4$ (in red) used for generating an example with $3$ simple zeros of $I$, see \figref{KI}(a). The function $\phi$ is in blue whereas $\psi_4$ is in green. In (b): A zoom around $y_2=y_{2c}$ showing the three values of $y_2$ on the critical manifold corresponding to the $x$-point where $I(x)=0$. Notice that these points lie inside the region where $\phi_4=\psi_4$.}
\figlab{Kphi}
\end{figure}

\begin{figure}[h!]
\begin{center}
\subfigure[]{\includegraphics[width=.42\textwidth]{./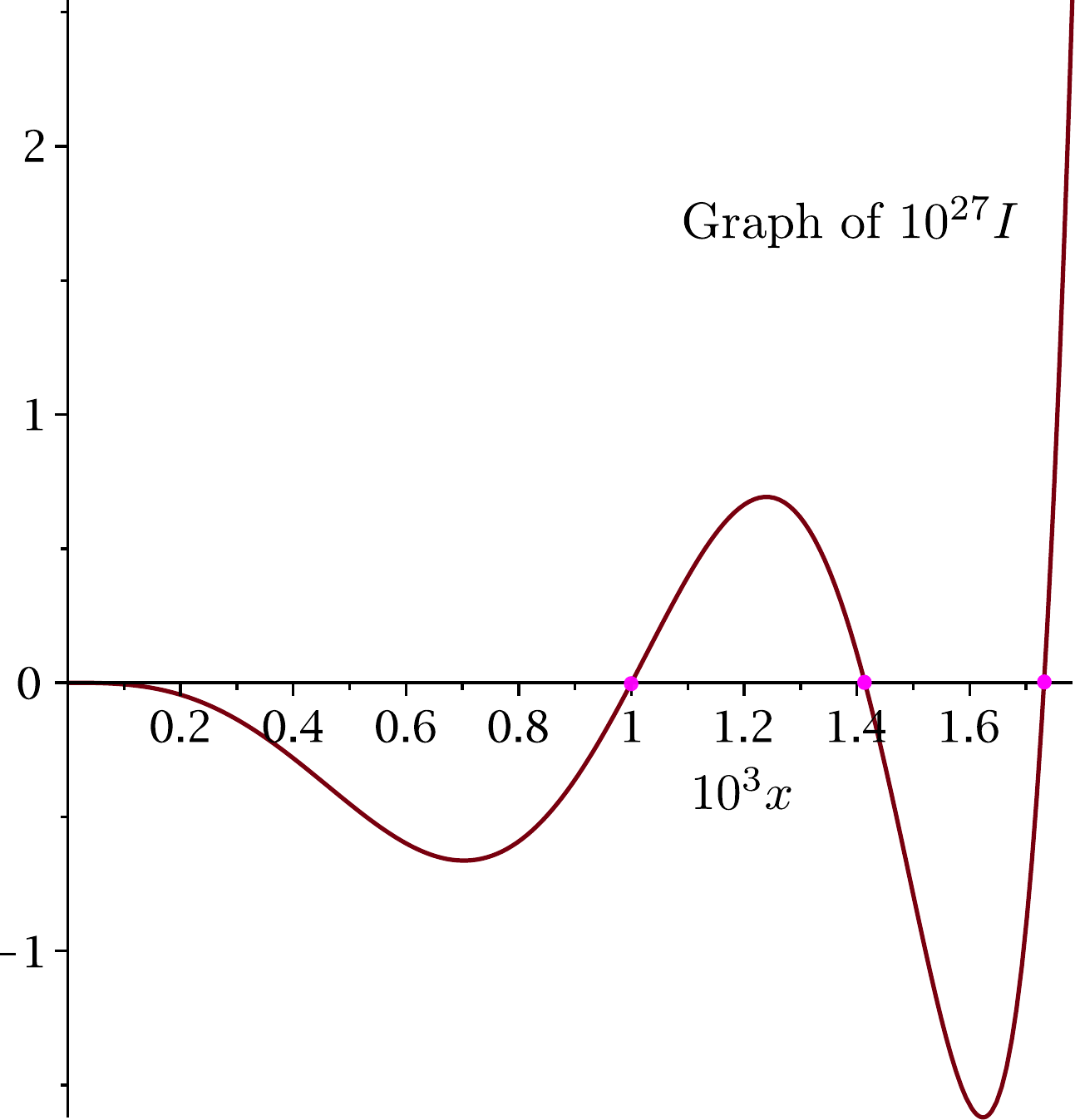}}
\subfigure[]{\includegraphics[width=.42\textwidth]{./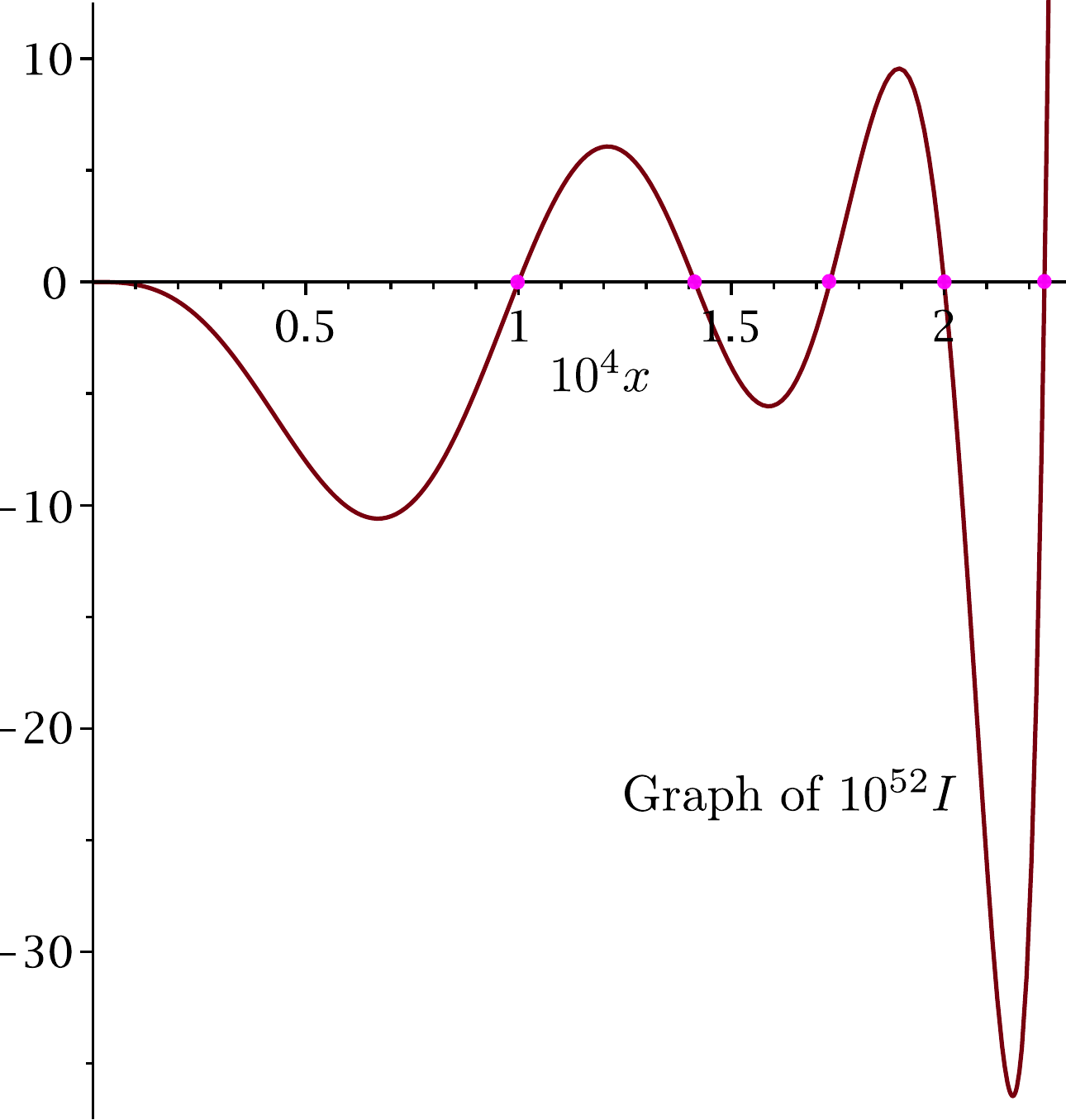}}
\subfigure[]{\includegraphics[width=.42\textwidth]{./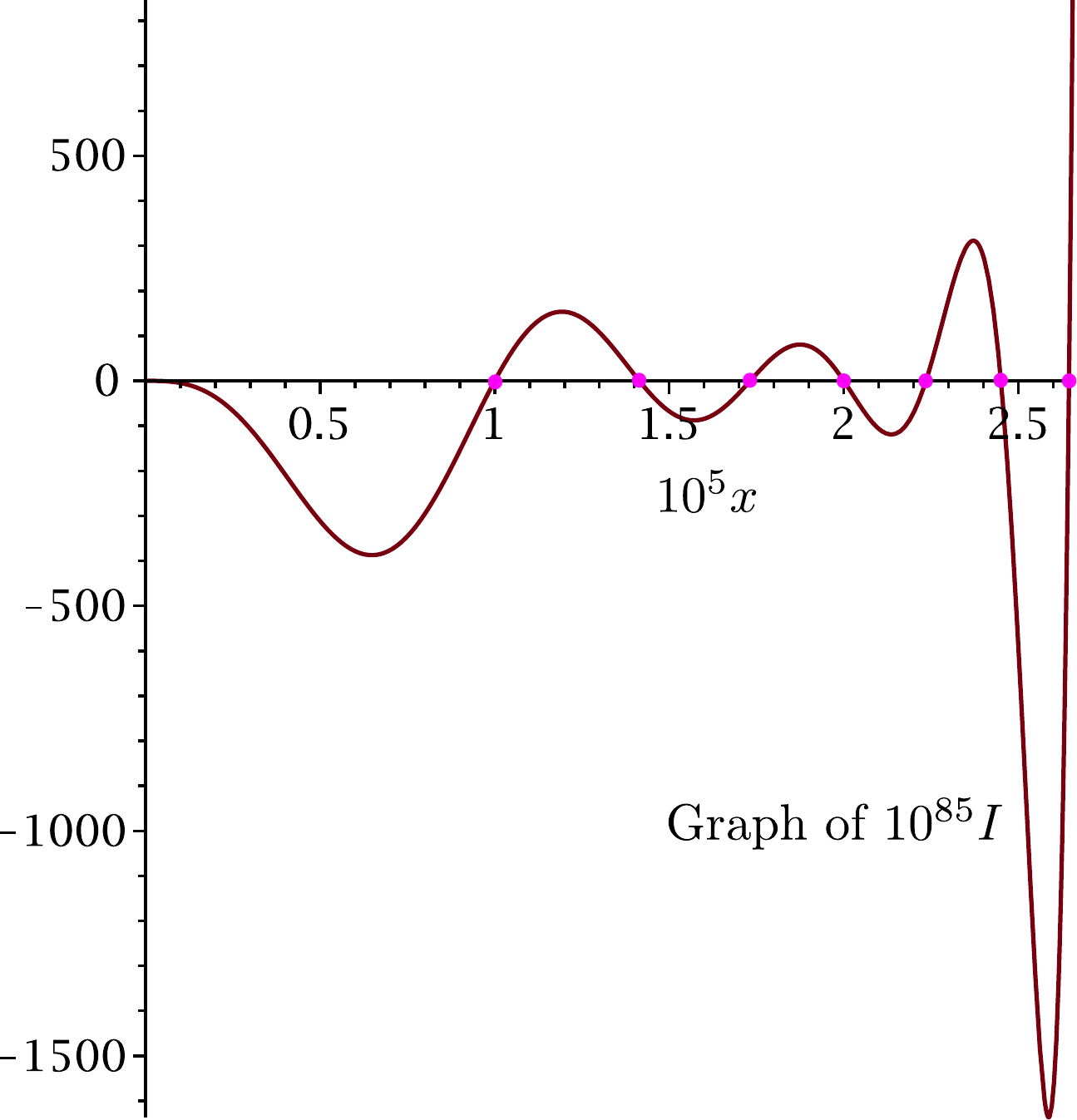}}
\end{center}
\caption{The graphs of (scaled versions of) the slow divergence integral $I$, see \eqref{I2}, for $\psi_k$ given by \eqref{phik4} (a), \eqref{phik6} (b), \eqref{phik8} (c). Notice that in each case, $I$ has roots close to $\delta \sqrt{i}$, $i=1,\ldots,k-1$ as desired. }
\figlab{KI}
\end{figure}

\appendix

\section{Blowing up the degenerate line $\overline{H}$}\label{appendix-0}
In this section we show that the system \eqref{global} from Section \ref{section-primary-fam} satisfies Assumptions T3--T6 of \cite{DM-entryexit} near the intersection $p_0$ of $\overline H$ with $\overline C$. We write 
\begin{equation}\label{Taylor}
Z_\pm(\cdot,\lambda_0+\epsilon \widetilde\lambda)=Z_\pm (\cdot,\lambda_0)+\epsilon \widetilde\lambda\widetilde Z_\pm(\cdot)+\mathcal O(\epsilon^2)
\end{equation}
where $\widetilde Z_\pm=(\widetilde X_\pm,\widetilde Y_\pm)$, like in \secref{section-2-fold}, and $\widetilde\lambda\sim 0$ is introduced in \secref{cylblowup}.
We blow up $\overline{H}$ to a cylinder through the following blow-up transformation
 \begin{align}\label{bup-second}
  \rho\ge 0,\,(\tilde x,\tilde \epsilon) \mapsto \begin{cases}
                                             x&=\rho \tilde x,\nonumber\\
                                             \epsilon &=\rho \tilde \epsilon,
                                             \end{cases}                                              \end{align}
where $(\tilde x,\tilde \epsilon)\in\mathbb{S}^1$ and $\tilde\epsilon\ge 0$. Again we will work with different charts. Let's first consider the end points of the normally attracting part and the normally repelling part of $\overline C$ on the edge  of the cylinder $\{\rho=\tilde\epsilon=0\}$.

\subsection{Dynamics in the phase directional charts $\tilde x=\pm 1$}\label{section-secondary-phase}
In the phase directional chart $\tilde{x}=1$ we have $(x,\epsilon)=(\rho,\rho\tilde{\epsilon})$. In these coordinates system \eqref{global} becomes, after division by $\rho>0$,
\begin{equation}\label{phase-second}
\begin{aligned}
 \dot y_2 &= Y_+'(0,0,\lambda_0)\phi(y_2)+Y_-'(0,0,\lambda_0)(1-\phi(y_2))+\mathcal O(\rho,\tilde\epsilon),\\
 \dot \rho&=\rho\tilde{\epsilon}^2\left( X_+(0,0,\lambda_0)\phi(y_2)+X_-(0,0,\lambda_0)(1-\phi(y_2))+\mathcal O(\rho)  \right),\\
 \dot{\tilde\epsilon} &= -\tilde\epsilon^3\left( X_+(0,0,\lambda_0)\phi(y_2)+X_-(0,0,\lambda_0)(1-\phi(y_2))+\mathcal O(\rho)  \right).
\end{aligned}
\end{equation}
When $\rho=\tilde\epsilon=0$, system (\ref{phase-second}) has a semi-hyperbolic singularity \[y_2=y_{2c}:=\phi^{-1}\left(\frac{-Y_-'}{Y_+'-Y_-'}(0,0,\lambda_0)\right).\] The eigenvalues of the linearization are \[((Y_+'-Y_-')(0,0,\lambda_0)\phi'(y_2),0,0),\]
the first eigenvalue being positive by  \propref{VI3} item (iv). Two-dimensional center manifolds of (\ref{phase-second}) at this singularity are transverse to the unstable manifold given by the $y_2$-axis. Thus, the end point of the repelling part of $\overline C$ is normally hyperbolic (Assumption T3). Moreover, each center manifold of (\ref{phase-second}) at the singularity is the graph of 
\[y_2=y_{2c}+\mathcal O(\rho,\tilde\epsilon).\]
Using the $(\rho,\tilde\epsilon)$-component of (\ref{phase-second}) we easily find the center behavior: \[\left\{\dot\rho=\rho\tilde\epsilon^2\left(X_{sl}(0,\lambda_0)+\mathcal O(\rho,\tilde\epsilon) \right),\dot{\tilde\epsilon}=-\tilde\epsilon^3\left(X_{sl}(0,\lambda_0)+\mathcal O(\rho,\tilde\epsilon) \right)\right\}.\]
Since $X_{sl}(0,\lambda_0)>0$, this system has, after division by $\tilde\epsilon^2$, an isolated hyperbolic saddle $(\rho,\tilde\epsilon)=(0,0)$ (Assumption T4). Notice that the exponent in $\tilde\epsilon^2$ is equal to the order of degeneracy mentioned in Section \ref{section-primary-fam}. Notice also that the center manifold, restricted to $\rho=0$, is unique because (\ref{phase-second}) is of saddle type inside $\rho=0$.

The chart $\tilde x=-1$ can be covered by applying $(t,\rho,\tilde{\epsilon})\mapsto (-t,-\rho,-\tilde{\epsilon})$ to (\ref{phase-second}).

\begin{remark}
In the framework of \cite{DM-entryexit} a turning point is usually replaced with a sphere $\mathbb{S}^2$ and Assumptions T3-T4 have to be satisfied at the end points of normally hyperbolic branches of the critical curve on the equator of the sphere.  In our slow-fast setting \eqref{global} it is more convenient to use a cylindrical blow-up. This is not a problem because locally near the end points, located on the edge of the cylinder, one can use the normal linearization theorem of \cite{Bonk}, like in \cite{DM-entryexit}. 
 \end{remark}

\subsection{Dynamics in the family chart $\tilde\epsilon=1$ }\label{section-secondary-family}
In the scaling chart we obtain $x=\epsilon x_2$. The system \eqref{global} changes into
\begin{equation}\eqlab{x2y2}
\begin{aligned}
 \dot x_2 &= X_+(0,0,\lambda_0)\phi(y_2)+X_-(0,0,\lambda_0)(1-\phi(y_2)),\\
 \dot y_2 &=\left(x_2Y_+'(0,0,\lambda_0)+\widetilde\lambda\widetilde{Y}_+(0,0)\right)\phi(y_2)+\left(x_2Y_-'(0,0,\lambda_0)+\widetilde\lambda\widetilde{Y}_-(0,0)\right)(1-\phi(y_2)),
\end{aligned}
\end{equation}
upon desingularization (dividing the right hand side by $\epsilon$)
and (subsequently) setting $\epsilon=0$. We used (\ref{Taylor}). For $\widetilde\lambda=0$, \eqref{x2y2} has an invariant line $\gamma$ defined by $y_2=y_{2c}$
with the dynamics $\dot{x}_2=X_{sl}(0,\lambda_0)$ on it. The line $\gamma$ is a heteroclinic connection on the cylinder connecting the end point of the attracting part of $\overline{C}$ to the end point of the repelling part of $\overline C$ (Assumption T5). See also \figref{blowup}(b).

To show that the invariant line $\gamma$ breaks in a regular way as we vary $\widetilde \lambda\sim 0$ (Assumption T6), we follow \cite[section 6.2]{kristiansen2015a} and extend the system by augmenting $\dot{\widetilde \lambda}=0$. In this formulation the center manifolds from $\tilde x=\pm 1$ -- that extend the attracting and repelling parts of $\overline C$ onto $\overline H$ -- become two-dimensional and $\gamma$ belongs to the intersection of these within $\rho=0$ (where the manifolds are overflowing and unique). Write $\{\mbox{\eqref{x2y2}}, \dot {\widetilde\lambda}=0\}$ in terms of $\frac{dy_2}{dx_2},\frac{d\widetilde \lambda}{dx_2}$ and consider the variational equations around the solution $y_2=y_{2c}$, $\widetilde \lambda=0$ (corresponding to $\gamma$):
\begin{equation}\eqlab{var}
\begin{aligned}
\frac{du}{dx_2} &= Ax_2 u + B v,\\
\frac{dv}{dx_2}&=0,
\end{aligned}
\end{equation}
where
\begin{align*}
    A := \frac{Y_+'-Y_-' }{X_{sl}}(0,0,\lambda_0)\phi'(y_{2c}),\quad B:= \frac{\widetilde Y_-Y_+'-\widetilde Y_+Y_-'}{X_{sl}(Y'_+-Y'_-)} (0,0,\lambda_0).
\end{align*}
Notice that $A>0$, see \propref{VI3} item (iv), and that $B\ne 0$ by \ref{assD}, see specifically \eqref{versal}. It is then straightforward to show, using the asymptotics of the error-function $\text{erf}$, see also \cite[Lemma 6.2]{kristiansen2015a}, that there are two linearly independent solutions of \eqref{var}:
\begin{align*}
(u,v) &= \left(B \sqrt{\frac{2\pi }{A}}e^{Ax_2^2/2}\left(\text{erf}\left(\sqrt{\frac{A}{2}}x_2\right)\pm 1\right),1\right),
\end{align*}
with exponential growth for $x_2\rightarrow \infty$ (resp. $-\infty$) and algebraic growth for $x_2\rightarrow -\infty$ (resp. $\infty$). By \cite[Proposition 4.2]{szmolyan_canards_2001} the extended center manifolds therefore intersect transversally along $\gamma$, which completes the verification of Asssumption T6. 

\section{Transition maps near the hyperbolic edges}\label{appendix}
In this section we study the transition map near the line of singularities $\{r_1=\epsilon_1=0\}$ of 
\begin{equation}\eqlab{Z1}
\begin{aligned}
    \dot x &= r_1^2 X(x,r_1,\epsilon_1),\\
    \dot r_1 &=-r_1 Y(x,r_1,\epsilon_1),\\
    \dot \epsilon_1 &=\epsilon_1 Y(x,r_1,\epsilon_1),
\end{aligned}
\end{equation}
where $X$ and $Y$ are smooth functions. We assume that $Y(x,0,0)>0$ for each $x\in J$, $J$ being a compact set. Notice that $(x,0,0)$ is a set of equilibria and the linearization has $\mp Y(x,0,0)$ as two nonzero eigenvalues.

We now describe a transition map near this line of partially hyperbolic equilibria. We consider the transition map $Q_1$ from $\Sigma_{in}:=\{(x,r_1,\epsilon_1): r_1=r_{10}\}$ to $\Sigma_{out}:=\{(x,r_1,\epsilon_1): \epsilon_1=\epsilon_{10}\}$ along the trajectories of \eqref{Z1} where $r_{10},\epsilon_{10}$ are small positive constants. Let $\pi_x Q_1$ denote the $x$-component of $Q_1$.
\begin{proposition}\label{prop-passage-hyperbolic}
Fix $n \in \mathbb N$ then there are constants $r_{10}>0$ and $\epsilon_{10}>0$ small enough such that  \begin{align*}
x\mapsto 
    \pi_x Q_1(x,r_{10},\epsilon_{1})
\end{align*}
is $C^n$ uniformly and continuously in $\epsilon_1$. In particular, 
\begin{align*}
\pi_x Q_1(x,r_{10},\epsilon_{1})=g_0(x) + \mathcal O(\epsilon_1 \log \epsilon_1^{-1}), \ \epsilon_1\to 0,
\end{align*}
with $g_0$ smooth
and this expression can be differentiated with respect to $x$ without changing the order of the remainder.
\end{proposition}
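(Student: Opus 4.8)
The plan is to exploit the fact that $r_1\epsilon_1$ is a first integral of \eqref{Z1}: indeed $\frac{d}{dt}(r_1\epsilon_1)=-r_1 Y\epsilon_1+r_1\epsilon_1 Y=0$, so along every orbit $r_1\epsilon_1\equiv\nu$ for some constant $\nu\ge 0$. On $\Sigma_{in}$ this reads $\nu=r_{10}\epsilon_1$ (with $\epsilon_1$ the incoming coordinate), and on $\Sigma_{out}$ the exit value of $r_1$ is $\nu/\epsilon_{10}=r_{10}\epsilon_1/\epsilon_{10}$. First I would fix $r_{10},\epsilon_{10}>0$ small enough that $Y>0$ on $\widetilde J\times[0,r_{10}]\times[0,\epsilon_{10}]$ for a compact $\widetilde J\supset J$; this is possible since $Y(\cdot,0,0)>0$ on the compact set $J$. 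Then $\dot r_1<0$ for $r_1>0$, so an orbit from $\Sigma_{in}$ decreases $r_1$ monotonically and reaches $\Sigma_{out}$, with the $x$-coordinate remaining in $\widetilde J$ because $\dot x=\mathcal{O}(r_1^2)$ is small (a routine a priori estimate).

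The key reduction is to reparametrise each orbit by $r_1$ (legitimate since $\dot r_1\ne 0$) and to substitute $\epsilon_1=\nu/r_1$. The $x$-equation becomes the scalar ODE
\[
\frac{dx}{dr_1}=r_1\,F(x,r_1,\nu/r_1),\qquad F:=-X/Y,
\]
which depends smoothly on the parameter $\nu$ and extends smoothly down to $r_1=0$, $\nu=0$. Writing $x(r_1;x_0,\nu)$ for the solution with $x(r_{10})=x_0$, we obtain
\[
\pi_x Q_1(x_0,r_{10},\epsilon_1)=x_0-\int_{\nu/\epsilon_{10}}^{r_{10}} r_1\,F\bigl(x(r_1;x_0,\nu),r_1,\nu/r_1\bigr)\,dr_1,\qquad \nu=r_{10}\epsilon_1.
\]
I would then set $g_0(x_0):=x_0-\int_0^{r_{10}} r_1\,F(\bar x(r_1;x_0),r_1,0)\,dr_1$, where $\bar x$ solves the $\nu=0$ equation; $g_0$ is smooth by smooth dependence on initial data together with the smoothness of $F$ up to $\nu=0$.

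The estimate follows from a Gronwall comparison of $x(\cdot;x_0,\nu)$ with $\bar x(\cdot;x_0)$. With $\delta(r_1):=\bar x(r_1;x_0)-x(r_1;x_0,\nu)$ and the Lipschitz continuity of $F$ in its first and third slots, one has $|\delta'(r_1)|\le Lr_1|\delta|+L\nu$, and since $\delta(r_{10})=0$ Gronwall gives $|\delta(r_1)|\le C\nu$ uniformly on $[\nu/\epsilon_{10},r_{10}]$. Splitting the integral defining $\pi_x Q_1$ at $r_1=\nu/\epsilon_{10}$, the piece on $[0,\nu/\epsilon_{10}]$ is $\mathcal{O}(\nu^2)$ and the piece on $[\nu/\epsilon_{10},r_{10}]$ is bounded by $\int_{\nu/\epsilon_{10}}^{r_{10}} r_1\,L\bigl(|\delta|+\nu/r_1\bigr)\,dr_1=\mathcal{O}(\nu)$; hence $\pi_x Q_1-g_0=\mathcal{O}(\nu)=\mathcal{O}(\epsilon_1)$, which in particular is $\mathcal{O}(\epsilon_1\log\epsilon_1^{-1})$. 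For the $C^n$ statement and the assertion that the expansion may be differentiated in $x$ without worsening the remainder, I would differentiate the integral identity in $x_0$ up to $n$ times: the $j$-th variational equation for $x(\cdot;x_0,\nu)$ has the same structure (linear in the top-order unknown, forced by lower orders), so induction on $j$ with the same Gronwall argument yields $|\partial_{x_0}^j x(r_1;x_0,\nu)-\partial_{x_0}^j\bar x(r_1;x_0)|\le C_j\nu$, and therefore $\partial_{x_0}^j(\pi_x Q_1-g_0)=\mathcal{O}(\nu)$ uniformly for $j=0,\dots,n$; continuity of $\epsilon_1\mapsto\pi_x Q_1$ in the $C^n_x$ topology is then immediate.

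The heart of the matter — more a key observation than an obstacle — is that $r_1\epsilon_1$ is conserved: this is exactly what dissolves the resonant $\mp Y$ structure of the partially hyperbolic passage and converts a priori singular transition behaviour into a regular, smoothly $\nu$-parametrised scalar ODE problem, for which elementary perturbation theory suffices. The only genuinely technical points are the uniform a priori confinement of $x$ (immediate from $\dot x=\mathcal{O}(r_1^2)$) and propagating uniform Gronwall constants through the variational equations; I expect the $C^n$-uniformity in $\epsilon_1$ to be the part requiring the most bookkeeping, though no new idea beyond the induction just described.
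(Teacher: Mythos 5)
Your argument is correct, but it takes a genuinely different route from the paper's. The paper first divides \eqref{Z1} by $Y>0$ and proves a preparatory lemma: a diffeomorphism $(x,r_1,\epsilon_1)\mapsto (h(x,r_1),r_1,\epsilon_1)$ straightening the stable fibration, after which the system takes the normal form $\dot \xi = r_1^2\epsilon_1 G$, $\dot r_1=-r_1$, $\dot\epsilon_1=\epsilon_1$; the conserved quantity $r_1\epsilon_1$ and the explicit transition time $T=\log(\epsilon_1^{-1}\epsilon_{10})$ then give an integral formula for $\xi(T)$ from which the estimate is read off, with the $x$-derivatives handled through the variational equations of the normal form and delegated to standard references. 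You skip the straightening entirely: you use the same first integral $r_1\epsilon_1=\nu$ to reparametrize by $r_1$ and reduce to the scalar equation $dx/dr_1=r_1F(x,r_1,\nu/r_1)$, $F=-X/Y$, then compare with the $\nu=0$ flow by Gronwall and treat derivatives by induction on the variational equations. This is more elementary and self-contained (no fibration lemma, no external citation for the derivative estimates), and because the perturbative forcing is $r_1\cdot\mathcal O(\nu/r_1)=\mathcal O(\nu)$, hence integrable without a logarithm, it even yields the sharper remainder $\mathcal O(\epsilon_1)$, which implies the stated $\mathcal O(\epsilon_1\log\epsilon_1^{-1})$; the paper's normal form, by contrast, makes the smallness structurally manifest ($\dot\xi$ proportional to $r_1^2\epsilon_1$) at the cost of the preparatory lemma. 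Two small points to tidy up: the claim that $r_1F(x,r_1,\nu/r_1)$ ``extends smoothly down to $r_1=0$, $\nu=0$'' is not literally true (as a function of $(x,r_1,\nu)$ it is not even continuous at that corner), but your estimates never use it — all that is needed is smoothness of $F$ on the compact set $\widetilde J\times[0,r_{10}]\times[0,\epsilon_{10}]$ together with $\nu/r_1\le\epsilon_{10}$ on the integration range, which is exactly how you argue; and you should state explicitly that $Y(\cdot,0,0)>0$ on a compact neighborhood $\widetilde J\supset J$ by continuity, so that the a priori confinement of $x$ keeps the orbit in the region where $Y>0$ and the Lipschitz constants are uniform.
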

\begin{proof}
We work with the equivalent field \eqref{Z1} divided by $Y>0$ on $J\times [0,r_{10}]\times [0,\epsilon_{10}]$. We denote this vector field by $\widetilde F$. First we prove the following lemma.
\begin{lemma}
For $r_{10}$ sufficiently small, there exists a diffeomorphism 
$\Phi$
\begin{align*}
    \Phi(x,r_1,\epsilon_1) = \begin{pmatrix}
    h(x,r_1)\\
    r_1\\
    \epsilon_1\end{pmatrix},
\end{align*}
with $h_x(x,r_1)\ne 0$ for all $x\in J,\,r_1\in [0,r_{10}]$,
such that $\widetilde F$ changes into
\begin{equation}\eqlab{xieqn}
\begin{aligned}
    \dot \xi &= r_1^2\epsilon_1 G(\xi,r_1,\epsilon_1),\\
    \dot r_1 &=-r_1,\\
    \dot \epsilon_1 &=\epsilon_1,
\end{aligned}
\end{equation}
for some smooth $G$.
\end{lemma}
\begin{proof}
The map $\Phi$ is obtained by straightening out the stable manifolds of  points $(\xi,0,0)$. These manifolds are contained within $\epsilon_1=0$ and are graphs over $r_1$:
\begin{align*}
x = g(\xi,r_1)
\end{align*}
In particular, $g(\xi,0)=\xi$, $g_\xi(\xi,0)\ne 0$ and we can invert this expression for $\xi$:
\begin{align*}
    \xi = h(x,r_1)
\end{align*}
with $h(\cdot,r_1)=g^{-1}(\cdot,r_1)$. Seeing that $\dot \xi=0$ for $\epsilon_1=0$ we obtain the result by smoothness of the right hand side. 
\end{proof}
We then proceed to work on the normal form \eqref{xieqn}, describing the transition map $Q_1$ from $\Sigma_{in}=\{(\xi,r_1,\epsilon_1):r_{1}=r_{10}\}$ to $\Sigma_{out}=\{(\xi,r_1,\epsilon_1):\epsilon_1=\epsilon_{10}\}$. Let $\pi_\xi Q_1$ denote the $\xi$-component. 

First we realize that $r_1\epsilon_1=const.$ is conserved. 
Integrating the last two equations from $r_1(0)=r_{10}$, $\epsilon_1(0)=``\epsilon_{1}"$ and inserting this into the first one, we obtain 
\begin{align}
    \xi(T) = \xi(0) + r_{10} \epsilon_{1} \int_0^Te^{-s} r_{10} G(\xi(s),e^{-s} r_{10},e^{s}\epsilon_{1}) ds,\eqlab{xiT}
\end{align}
where the transition time $T=\log (\epsilon_1^{-1} \epsilon_{10})$. From here we directly obtain that 
\begin{align*}\pi_\xi Q_1(\xi,r_{10},\epsilon_{1}) =\xi+\mathcal O(\epsilon_1 \log \epsilon_1^{-1}), \ \epsilon_1\to 0
\end{align*}
because the integrand in \eqref{xiT} is bounded on the segment $[0,T]$. 
We handle the derivatives of $\pi_\xi Q_1$ with respect to $\xi$ in a similar way by considering the higher variational equations of \eqref{xieqn}. We skip the details because it is standard, see e.g. \cite{Gucwa2009783} or \cite[Proposition 3.3]{ZhangEdge}.
\end{proof}
If $X$ and $Y$ in \eqref{Z1} depend smoothly on a parameter $\alpha$, then $\pi_x Q_1$ will also depend $C^n$-smoothly on this parameter. This also follows from studying \eqref{xieqn}. We simply study the variational equations obtained by differentiating with respect to $\alpha$ and apply a similar estimation.


%
%
%


%
%
\bibliography{refs}

\begin{thebibliography}{10}

\bibitem{artes}
J.~C. Art\'{e}s, F.~Dumortier, and J.~Llibre.
\newblock Limit cycles near hyperbolas in quadratic systems.
\newblock {\em J. Differential Equations}, 246(1):235--260, 2009.

\bibitem{Benoit}
\'{E}. Benoit.
\newblock \'{E}quations diff\'erentielles: relation entr\'ee--sortie.
\newblock {\em C. R. Acad. Sci. Paris S\'er. I Math.}, 293(5):293--296, 1981.

\bibitem{berger2002a}
E.~J. Berger.
\newblock Friction modeling for dynamic system simulation.
\newblock {\em Applied Mechanics Reviews}, 55(6):535--577, 2002.

\bibitem{Gavrilov}
M.~Bobie\'nski and L.~Gavrilov.
\newblock Finite cyclicity of slow-fast {D}arboux systems with a two-saddle
  loop.
\newblock {\em Proc. Amer. Math. Soc.}, 144(10):4205--4219, 2016.

\bibitem{Mardesic}
M.~Bobie\'nski, P.~Mardesic, and D.~Novikov.
\newblock Pseudo-abelian integrals on slow-fast {D}arboux systems.
\newblock {\em Ann. Inst. Fourier (Grenoble)}, 63(2):417--430, 2013.

\bibitem{Bonk}
P.~Bonckaert.
\newblock Partially hyperbolic fixed points with constraints.
\newblock {\em Trans. Amer. Math. Soc.}, 348(3):997--1011, 1996.

\bibitem{bonet-reves2018a}
C.~Bonet-Reves, J.~Larrosa, and T.~M-Seara.
\newblock Regularization around a generic codimension one fold-fold
  singularity.
\newblock {\em Journal of Differential Equations}, 265(5):1761--1838, 2018.

\bibitem{bossolini2017a}
E.~Bossolini, M.~Br{\o}ns, and K.~U. Kristiansen.
\newblock Singular limit analysis of a model for earthquake faulting.
\newblock {\em Nonlinearity}, 30(7):2805--2834, 2017.

\bibitem{bossolini2020a}
E.~Bossolini, M.~Brøns, and K.~U. Kristiansen.
\newblock A stiction oscillator with canards: On piecewise smooth nonuniqueness
  and its resolution by regularization using geometric singular perturbation
  theory.
\newblock {\em {SIAM Review}}, 62(4):869--897, 2020.

\bibitem{broucke2001a}
M.~E. Broucke, C.~C. Pugh, and S.~N. Simic.
\newblock Structural stability of piecewise smooth systems.
\newblock {\em Computational and Applied Mathematics}, 20(1-2):51--89, 2001.

\bibitem{caubergh2012a}
M.~Caubergh.
\newblock Hilbert's sixteenth problem for polynomial liénard equations.
\newblock {\em Qualitative Theory of Dynamical Systems}, 11(1):3--18, 2012.

\bibitem{ChrisL}
C.~J. Christopher and N.~G. Lloyd.
\newblock Polynomial systems: a lower bound for the {H}ilbert numbers.
\newblock {\em Proc. Roy. Soc. London Ser. A}, 450(1938):219--224, 1995.

\bibitem{de2013a}
P.~De~Maesschalck and M.~Desroches.
\newblock Numerical continuation techniques for planar slow-fast systems.
\newblock {\em {SIAM Journal on Applied Dynamical Systems}}, 12(3):1159--1180,
  2013.

\bibitem{DM-entryexit}
P.~De~Maesschalck and F.~Dumortier.
\newblock Time analysis and entry-exit relation near planar turning points.
\newblock {\em J. Differential Equations}, 215(2):225--267, 2005.

\bibitem{DM}
P.~De~Maesschalck and F.~Dumortier.
\newblock Canard cycles in the presence of slow dynamics with singularities.
\newblock {\em Proc. Roy. Soc. Edinburgh Sect. A}, 138(2):265--299, 2008.

\bibitem{DDMoreLC}
P.~De~Maesschalck and F.~Dumortier.
\newblock Classical {L}i\'{e}nard equations of degree $n\ge 6$ can have
  $[\frac{n-1}{2}]+2$ limit cycles.
\newblock {\em J. Differential Equations}, 250(4):2162--2176, 2011.

\bibitem{SDICLE1}
P.~De~Maesschalck and R.~Huzak.
\newblock Slow divergence integrals in classical {L}i\'{e}nard equations near
  centers.
\newblock {\em J. Dynam. Differential Equations}, 27(1):177--185, 2015.

\bibitem{Bernardo08}
M.~di~Bernardo, C.~J. Budd, A.~R. Champneys, and P.~Kowalczyk.
\newblock {\em Piecewise-smooth Dynamical Systems: Theory and Applications}.
\newblock Springer Verlag, 2008.

\bibitem{dumortier2011a}
F.~Dumortier.
\newblock Slow divergence integral and balanced canard solutions.
\newblock {\em Qualitative Theory of Dynamical Systems}, 10(1):65--85, 2011.

\bibitem{dumortier1996a}
F.~Dumortier, M.~El~Morsalani, and C.~Rousseau.
\newblock Hilbert's 16th problem for quadratic systems and cyclicity of
  elementary graphics.
\newblock {\em Nonlinearity}, 9(5):1209--1261, 1996.

\bibitem{DPR}
F.~Dumortier, D.~Panazzolo, and R.~Roussarie.
\newblock More limit cycles than expected in {L}i\'enard equations.
\newblock {\em Proc. Amer. Math. Soc.}, 135(6):1895--1904 (electronic), 2007.

\bibitem{dumortier_1996}
F.~Dumortier and R.~Roussarie.
\newblock Canard cycles and center manifolds.
\newblock {\em Mem. Amer. Math. Soc.}, 121:1--96, 1996.

\bibitem{dumortier2001a}
F.~Dumortier and R.~Roussarie.
\newblock {Multiple canard cycles in generalized Li\'{e}nard equations}.
\newblock {\em Journal of Differential Equations}, 174(1):1--29, 2001.

\bibitem{dumortier1994a}
F.~Dumortier, R.~Roussarie, and C.~Rousseau.
\newblock Hilbert's 16th problem for quadratic vector fields.
\newblock {\em Journal of Differential Equations}, 110(110):86--133, 1994.

\bibitem{DDRR}
F.~Dumortier and C.~Rousseau.
\newblock Study of the cyclicity of some degenerate graphics inside quadratic
  systems.
\newblock {\em Commun. Pure Appl. Anal.}, 8(4):1133--1157, 2009.

\bibitem{esteban2021a}
M.~Esteban, J.~Llibre, and C.~Valls.
\newblock {The 16th Hilbert problem for discontinuous piecewise isochronous
  centers of degree one or two separated by a straight line}.
\newblock {\em Chaos}, 31(4):043112, 2021.

\bibitem{fen3}
N.~Fenichel.
\newblock Geometric singular perturbation theory for ordinary differential
  equations.
\newblock {\em J. Diff. Eq.}, 31:53--98, 1979.

\bibitem{filippov1988differential}
A.F. Filippov.
\newblock {\em Differential Equations with Discontinuous Righthand Sides}.
\newblock Mathematics and its Applications. Kluwer Academic Publishers, 1988.

\bibitem{Valery}
M.~Han and V.~G. Romanovski.
\newblock On the number of limit cycles of polynomial {L}i\'{e}nard systems.
\newblock {\em Nonlinear Anal. Real World Appl.}, 14(3):1655--1668, 2013.

\bibitem{Han1}
M.~Han, Y.~Tian, and P.~Yu.
\newblock Small-amplitude limit cycles of polynomial {L}i\'{e}nard systems.
\newblock {\em Sci. China Math.}, 56(8):1543--1556, 2013.

\bibitem{Ren}
R.~Huzak.
\newblock Cyclicity of degenerate graphic {$DF_{2a}$} of
  {D}umortier-{R}oussarie-{R}ousseau program.
\newblock {\em Commun. Pure Appl. Anal.}, 17(3):1305--1316, 2018.

\bibitem{HuzakPrey}
R.~Huzak.
\newblock Predator-prey systems with small predator's death rate.
\newblock {\em Electron. J. Qual. Theory Differ. Equ.}, pages Paper No. 86, 16,
  2018.

\bibitem{HuDe}
R.~Huzak and P.~De~Maesschalck.
\newblock Slow divergence integrals in generalized {L}i\'{e}nard equations near
  centers.
\newblock {\em Electron. J. Qual. Theory Differ. Equ.}, pages No. 66, 10, 2014.

\bibitem{jeffrey2018a}
M.~R. Jeffrey.
\newblock {\em Hidden dynamics: The mathematics of switches, decisions and
  other discontinuous behaviour}.
\newblock Springer International Publishing, 2018.

\bibitem{jelbart2021b}
S.~Jelbart, K.~U. Kristiansen, and M.~Wechselberger.
\newblock Singularly perturbed boundary-focus bifurcations.
\newblock {\em Journal of Differential Equations}, 296:412--492, 2021.

\bibitem{jelbart2021c}
Samuel Jelbart, Kristian~Uldall Kristiansen, and Martin Wechselberger.
\newblock Singularly perturbed boundary-equilibrium bifurcations.
\newblock {\em Nonlinearity}, 34(11):7371--7314, 2021.

\bibitem{O-minimal}
T.~Kaiser, J.-P. Rolin, and P.~Speissegger.
\newblock Transition maps at non-resonant hyperbolic singularities are
  o-minimal.
\newblock {\em J. Reine Angew. Math.}, 636:1--45, 2009.

\bibitem{Gucwa2009783}
I.~Kosiuk and P.~Szmolyan.
\newblock Geometric singular perturbation analysis of an autocatalator model.
\newblock {\em Discrete and Continuous Dynamical Systems - Series S},
  2(4):783--806, 2009.

\bibitem{kosiuk2016a}
I.~Kosiuk and P.~Szmolyan.
\newblock Geometric analysis of the goldbeter minimal model for the embryonic
  cell cycle.
\newblock {\em Journal of Mathematical Biology}, 72(5):1337--1368, 2016.

\bibitem{kristiansen2018a}
K.~U. Kristiansen and S.~J. Hogan.
\newblock {Resolution of the piecewise smooth visible-invisible two-fold
  singularity in R3 using regularization and blowup}.
\newblock {\em Journal of Nonlinear Science}, 29(2):723--787, 2018.

\bibitem{kristiansen2020a}
K.~Uldall Kristiansen.
\newblock The regularized visible fold revisited.
\newblock {\em Journal of Nonlinear Science}, 30(6):2463--2511, 2020.

\bibitem{kristiansen2015a}
K.~Uldall Kristiansen and S.~J. Hogan.
\newblock Regularizations of two-fold bifurcations in planar piecewise smooth
  systems using blowup.
\newblock {\em {SIAM Journal on Applied Dynamical Systems}}, 14(4):1731--1786,
  2015.

\bibitem{uldall2021a}
K.~Uldall Kristiansen and P.~Szmolyan.
\newblock Relaxation oscillations in substrate-depletion oscillators close to
  the nonsmooth limit.
\newblock {\em Nonlinearity}, 34(2):1030--1083, 2021.

\bibitem{krupa_relaxation_2001}
M.~Krupa and P.~Szmolyan.
\newblock Relaxation oscillation and canard explosion.
\newblock {\em Journal of Differential Equations}, 174(2):312--368, 2001.

\bibitem{Kuznetsov2003}
{\mbox{Yu.~A}}.~Kuznetsov, S.~Rinaldi, and A.~Gragnani.
\newblock One parameter bifurcations in planar {F}ilippov systems.
\newblock {\em Int. J. Bif. Chaos}, 13:2157--2188, 2003.

\bibitem{CZLL}
C.~Li and J.~Llibre.
\newblock Uniqueness of limit cycles for {L}i\'enard differential equations of
  degree four.
\newblock {\em J. Differential Equations}, 252(4):3142--3162, 2012.

\bibitem{Zhu}
C.i Li and H.~Zhu.
\newblock Canard cycles for predator-prey systems with {H}olling types of
  functional response.
\newblock {\em J. Differential Equations}, 254(2):879--910, 2013.

\bibitem{li2003a}
J.~Li.
\newblock Hilbert's 16th problem and bifurcations of planar polynomial vector
  fields.
\newblock {\em International Journal of Bifurcation and Chaos in Applied
  Sciences and Engineering}, 13(1):47--106, 2003.

\bibitem{li2021a}
T.~Li and J.~Llibre.
\newblock {On the 16th Hilbert Problem for Discontinuous Piecewise Polynomial
  Hamiltonian Systems}.
\newblock {\em Journal of Dynamics and Differential Equations}, pages 1--16,
  2021.

\bibitem{LMP}
A.~Lins, W.~de~Melo, and C.~C. Pugh.
\newblock On {L}i\'enard's equation.
\newblock In {\em Geometry and topology ({P}roc. {III} {L}atin {A}mer. {S}chool
  of {M}ath., {I}nst. {M}at. {P}ura {A}plicada {CNP}q, {R}io de {J}aneiro,
  1976)}, pages 335--357. Lecture Notes in Math., Vol. 597. Springer, Berlin,
  1977.

\bibitem{LMT}
J.~Llibre, A.~C. Mereu, and M.~A. Teixeira.
\newblock Limit cycles of the generalized polynomial {L}i\'{e}nard differential
  equations.
\newblock {\em Math. Proc. Cambridge Philos. Soc.}, 148(2):363--383, 2010.

\bibitem{llibre2013a}
J.~Llibre, M.~A. Teixeira, and J.~Torregrosa.
\newblock Lower bounds for the maximum number of limit cycles of discontinuous
  piecewise linear differential systems with a straight line of separation.
\newblock {\em International Journal of Bifurcation and Chaos}, 23(4):1350066,
  2013.

\bibitem{RR15}
R.~Roussarie and C.~Rousseau.
\newblock Finite cyclicity of some center graphics through a nilpotent point
  inside quadratic systems.
\newblock {\em Trans. Moscow Math. Soc.}, pages 181--218, 2015.

\bibitem{smale}
S.~Smale.
\newblock Mathematical problems for the next century.
\newblock In {\em Mathematics: frontiers and perspectives}, pages 271--294.
  Amer. Math. Soc., Providence, RI, 2000.

\bibitem{Sotomayor96}
J.~Sotomayor and M.~A. Teixeira.
\newblock Regularization of discontinuous vector fields.
\newblock In {\em Proceedings of the International Conference on Differential
  Equations, Lisboa}, pages 207--223, 1996.

\bibitem{szmolyan_canards_2001}
P.~Szmolyan and M.~Wechselberger.
\newblock Canards in $\mathbb{R}^3$.
\newblock {\em J. Diff. Eq.}, 177(2):419--453, December 2001.

\bibitem{Utkin77}
V.~I. Utkin.
\newblock Variable structure systems with sliding modes.
\newblock {\em IEEE Trans. Automatic Control}, 22:212--222, 1977.

\bibitem{ZhangEdge}
C.~Wang and X.~Zhang.
\newblock Stability loss delay and smoothness of the return map in slow-fast
  systems.
\newblock {\em SIAM J. Appl. Dyn. Syst.}, 17(1):788--822, 2018.

\bibitem{lvarez2020a}
M.~J. Álvarez, B.~Coll, P.~De~Maesschalck, and R.~Prohens.
\newblock Asymptotic lower bounds on hilbert numbers using canard cycles.
\newblock {\em Journal of Differential Equations}, 268(7):3370--3391, 2020.

\end{thebibliography}
\bibliographystyle{plain}
\end{document}